\newcommand{\R}{\mathbb{R}}
\newcommand{\Rk}{\R^{k}}
\newcommand{\Rdk}{\R^{d\times k}}
\newcommand{\rank}{\operatorname{rank}}
\newcommand{\norm}[1]{\lVert#1\rVert_{\mathrm{F}}}
\newcommand{\dotprod}[1]{\langle #1\rangle}
\newcommand{\Tr}[1]{\mathrm{Tr}( #1)}
\newcommand{\KLop}{\mathrm{KL}}
\newcommand{\TVop}{\mathrm{TV}}
\newcommand{\KL}[1]{\KLop(#1)}
\newcommand{\TV}[1]{\TVop(#1)}
\newcommand{\ro}[1]{\rho(#1)}
\newcommand{\normop}[1]{\lVert#1\rVert_{\mathrm{op}}}
\newcommand{\vect}[1]{\mathrm{vec}(#1)}
\newcommand{\Od}{\mathcal{O}(d)}
\newcommand{\Gkd}{\mathrm{G}(k,d)}
\newcommand{\TODO}[1]{[#1]}
\begin{document}

\title{The generalized orthogonal Procrustes problem\\in the high noise regime}

\shorttitle{Generalized orthogonal Procrustes in high noise} 
\shortauthorlist{Thomas Pumir, Amit Singer, Nicolas Boumal} 

\author{{
\sc Thomas Pumir}$^*$,\\
Department of Operations Research and Financial Engineering, Princeton University\\
$^*${\email{Corresponding author. Email: pumir.thomas@gmail.com}}\\
{\sc Amit Singer}\\
Mathematics Department and PACM, Princeton University\\
{amits@math.princeton.edu}\\
{\sc and}\\
{\sc Nicolas Boumal} \\
Mathematics Department, Princeton University\\
{nboumal@math.princeton.edu}}

\maketitle

\begin{abstract}
{We consider the problem of estimating a cloud of points from numerous noisy observations of that cloud after unknown rotations, and possibly reflections.
This is an instance of the general problem of estimation under group action, originally inspired by applications in
3-D imaging and computer vision.
We focus on a regime where the noise level is larger than the magnitude of the signal, so much so that the rotations cannot be estimated reliably.
We propose a simple and efficient procedure based on invariant polynomials (effectively: the Gram matrices) to recover the signal, and we assess it against fundamental limits of the problem that we derive. We show our approach adapts to the noise level and is statistically optimal (up to constants) for both the low and high noise regimes.
In studying the variance of our estimator, we encounter the question of the sensitivity of a type of thin Cholesky factorization, for which we provide an improved bound which may be of independent interest.}
Procrustes; Cholesky factorization.
\\
2000 Math Subject Classification: 34K30, 35K57, 35Q80,  92D25
\end{abstract}

\section{Introduction}

We consider the problem of estimating  $k$ labeled points in $\mathbb{R}^{d}$, with $k \geq d$.
This cloud of points, which we call the parameter, is represented as a matrix $X$ of size $d \times k$. We restrict ourselves to the case where the smallest singular value of $X$ is bounded away from zero, that is, the cloud spans all $d$ dimensions.
We observe $N$ independent measurements $Y_{1},\ldots,Y_{N}$ of $X$, following the model
\begin{equation}\label{eq::description}
	Y_{i} = Q_{i}X + \sigma E_{i},
\end{equation}
where $\sigma > 0$ is the standard deviation of the noise, $E_1, \ldots, E_N$ are independent noise matrices in $\mathbb{R}^{d \times k}$ with independent, standard Gaussian entries and $Q_1, \ldots, Q_N$ are drawn uniformly and independently at random from the orthogonal group,
\begin{align}
	\Od & = \{Q \in \R^{d \times d}: Q^{T}Q = I_{d}\},
	\label{eq:Od}
\end{align}
where $I_d$ is the identity matrix of size $d$.
In what follows, we refer to orthogonal matrices as rotations, bearing in mind that (in our meaning) they may also include a reflection.
%
The method we propose in Section~\ref{sec:estimationalgorithm} applies under relaxed assumptions on the distributions of $Q_i$ and $E_i$: our assumptions here serve to streamline exposition.
This problem we investigate belongs to a larger class of estimation problems under group actions~\citep{BandeiraRigolletWeed2018RatesMRA}.

Notice that the distribution of the observations $Y_i$ is unchanged if $X$ is replaced by $QX$, for any orthogonal $Q$. As a result, we can only hope to recover the cloud $X$ up to a global rotation.
Accordingly, we define an equivalence relation $\sim$ over $\mathbb{R}^{d \times k}$:
\begin{equation}\label{eq::equivalencerelation}
X_{1} \sim X_{2} \iff X_{1} = QX_{2} \text{ for some } Q \in \Od.
\end{equation}
This equivalence relation partitions the parameter space into equivalence classes
\begin{align*}
	[X] & = \{ QX: Q \in \Od\}.
\end{align*}
The set of equivalence classes is the quotient space $\mathbb{R}^{d \times k}/\! \sim$. The distribution of the measurements $Y_i$ is parameterized by $[X]$, which we aim to estimate.

A natural approach to estimate $[X]$ would be to estimate the rotations $Q_{i}$ first (seen as latent or nuisance variables),  align the observations $Y_i$ using the estimated rotations, and average.
Typical of those approaches, the maximum likelihood estimator (MLE) is a solution of the following non-convex optimization problem:
\begin{equation}\label{eq::MLE}
\underset{\hat{X} \in \R^{d \times k},\hat{Q}_{1},\ldots,\hat{Q}_{N} \in \Od}{\mathrm{min}} \ \sum_{i=1}^{N}\norm{\hat{Q}_{i}^TY_{i}^{} - \hat{X}}^{2}.
\end{equation}
For any fixed choice of estimators $\hat{Q}_1, \ldots, \hat{Q}_N$, the optimal estimator for $\hat{X}$ according to the above is
\begin{equation}\label{MLEestimate}
\hat{X} = \dfrac{1}{N} \sum_{i=1}^{N}\hat{Q}_{i}^{T}Y_{i}^{}.
\end{equation}
This estimator can be plugged into the cost function of \eqref{eq::MLE}, reducing the problem to that of estimating only the rotations, upon which $\hat X$ can be deduced from~\eqref{MLEestimate}. A number of papers focus on the resulting problem, called synchronization of rotations~\citep{singer2010angular}.

Such approaches, however, necessarily fail at low signal-to-noise ratio (SNR). Specifically, we argue that if the noise level is too large,
no procedure can reliably determine the latent rotations $Q_i$. Essentially, this is because, when $\sigma$ is too large, the distribution of $QX + \sigma E$ is indistinguishable from that of $Q'X + \sigma E$, where $Q$ and $Q'$ are two rotations.

To see this, consider the following strictly simpler problem: we observe $Y = sQX + \sigma E$, where $Q \in \mathcal{O}(d)$, 
$X \in \R^{d \times k}$ and $\sigma > 0$ are \emph{known}, while $E \in \R^{d\times k}$ has independent standard Gaussian entries and $s$ is uniformly sampled from $\{+1, -1\}$, both unknown. An estimator $\psi$ (deterministic) assigns an estimate of $s$ to an observed $Y$. Estimating $s$ in this context is strictly simpler than estimating $Q_i$'s to any reasonable accuracy in our model (where furthermore $X$ is unknown), as we are only asked to determine whether $Q_i$ is some given rotation, or its opposite. Yet, even this simpler problem is hopeless at low SNR, directly implying the impossibility of estimating the rotations in problem~\eqref{eq::MLE}:
\begin{proposition}\label{prop::ImpossibilityRotations}
For any tolerance $\tau \in (0, 1/2)$, there exists a critical noise level $\sigma_0$ such that, for any estimator $\psi$, if $\sigma > \sigma_0$, then the probability of error $\mathbb{P}(\psi(Y) \neq s)$ exceeds $\tau$. 
\end{proposition}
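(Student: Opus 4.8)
The plan is to read this as a binary hypothesis test and invoke Le~Cam's two-point method. Conditioned on $s=+1$, the observation $Y$ is Gaussian on $\R^{d\times k}$ with mean $QX$ and covariance $\sigma^{2}I$; conditioned on $s=-1$ it is Gaussian with mean $-QX$ and the same covariance. Call these two laws $P_{+}$ and $P_{-}$. Since $s$ is uniform on $\{+1,-1\}$ and $Q,X,\sigma$ are known, the Bayes-optimal test is the likelihood-ratio test, and for \emph{any} estimator $\psi$ one has the standard lower bound
\[
\Prb{\psi(Y) \neq s} \;\geq\; \tfrac{1}{2}\bigl(1 - \TV{P_{+}, P_{-}}\bigr).
\]
So it suffices to show that $\TV{P_{+},P_{-}}$ can be made arbitrarily small by taking $\sigma$ large.

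Next I would control the total variation distance between these two Gaussians, which have equal covariance and opposite means, by Pinsker's inequality together with the closed form of the Gaussian KL divergence:
\[
\KL{P_{+} \,\|\, P_{-}} \;=\; \frac{1}{2\sigma^{2}}\,\norm{2QX}^{2} \;=\; \frac{2\,\norm{X}^{2}}{\sigma^{2}},
\]
where we used that $Q$ is orthogonal, so $\norm{QX} = \norm{X}$. Pinsker then gives $\TV{P_{+},P_{-}} \leq \sqrt{\tfrac{1}{2}\KL{P_{+}\|P_{-}}} = \norm{X}/\sigma$, hence
\[
\Prb{\psi(Y) \neq s} \;\geq\; \frac{1}{2}\left(1 - \frac{\norm{X}}{\sigma}\right).
\]
Choosing $\sigma_{0} = \norm{X}/(1-2\tau)$ makes the right-hand side strictly larger than $\tau$ whenever $\sigma > \sigma_{0}$, which is exactly the claim. (Note $\sigma_0$ depends only on $\tau$ and $\norm{X}$, not on $Q$.)

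I do not expect a genuine obstacle here; the argument is short and the only points needing a word of care are: (i) reusing the reduction sketched just before the statement, namely that any procedure reliably recovering the latent $Q_i$ would in particular solve the strictly easier sign problem, so impossibility for $s$ transfers to impossibility for the rotations in~\eqref{eq::MLE}; and (ii) observing that allowing $\psi$ to be randomized would not change the bound, since the minimum Bayes risk under a uniform prior is already attained by a deterministic likelihood-ratio test. If a sharp constant were desired one could bypass Pinsker and compute $\TV{P_{+},P_{-}}$ exactly: by rotational invariance of the isotropic Gaussian it reduces to the one-dimensional case with means $\pm\norm{X}$ and variance $\sigma^{2}$, giving $\TV{P_{+},P_{-}} = 2\Phi\!\left(\norm{X}/\sigma\right) - 1$ with $\Phi$ the standard normal CDF; but the Pinsker bound already suffices to exhibit a finite $\sigma_{0}$.
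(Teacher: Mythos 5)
Your proposal is correct and reaches the same conclusion, but it takes a slightly different technical path from the paper's own proof. The paper computes the likelihood ratio $L(Y)$ explicitly, observes that the log-likelihood ratio reduces to $\norm{X}^{2} + \sigma\langle E, QX\rangle$, and reads off the \emph{exact} error probability of the optimal test as $F\!\left(-\norm{X}/\sigma\right)$ with $F$ the standard Gaussian CDF; it then invokes Neyman--Pearson to transfer this to an arbitrary test $\psi$. You instead bound the Bayes error from below by $\tfrac{1}{2}\bigl(1-\TV{P_+,P_-}\bigr)$ and control the total variation distance via Pinsker and the closed-form Gaussian KL. Both arguments rest on the same statistical principle (optimality of the likelihood-ratio test for the binary problem), so they are equivalent in spirit; the paper's route yields a sharper, exact characterization of the error, while yours is a touch more streamlined and, usefully, exhibits an explicit $\sigma_0 = \norm{X}/(1-2\tau)$ rather than specifying $\sigma_0$ implicitly through the Gaussian CDF. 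Your parenthetical remark that the exact TV computation gives $2\Phi(\norm{X}/\sigma)-1$ in fact recovers the paper's quantity, so the two proofs agree at that level of precision. Both correctly handle randomized estimators and the reduction from rotation recovery to sign recovery.
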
 \noindent (The proof relies on the optimality of the likelihood ratio test for Gaussian distributions, see Appendix~\ref{ImpossibilityEstimation}.)

Because of this fundamental obstruction, in this paper, we aim to estimate $[X]$ directly from observations $Y_i$, bypassing any estimation (even implicit) of the latent $Q_i$'s. To do so, we follow a trend in signal processing that consists in estimating $[X]$ from features of the observations that are \emph{invariant} under the group action of $\Od$
~\citep{Tukey84Spectral},~\citep{SadlerGiannakis92reconstruction},~\citep{Giannakis89Reconstruction},~\citep{Abbeetal18AperiodicMRA},~\citep{BandeiraRigolletWeed2018RatesMRA},~\citep{Perryetal2017SampleComplexityMRA},~\citep{boumal2017heterogeneous}.

Specifically, consider the Gram matrix of $X$, that is, $X^T X$: it is invariant under orthogonal transformations since $(QX)^T(QX) = X^TX$ for any $Q \in \Od$. Thus, up to noise terms that we will handle, the Gram matrices of the observations $Y_i$ reveal information about the Gram matrix of $X$ without the need to estimate the $Q_i$'s. In Section~\ref{invariant}, we call upon invariant theory to argue that no other polynomial invariant features are necessary, in the sense that (a) they would be redundant with the Gram matrix, and (b) the Gram matrix is sufficient to fully characterize the equivalence class $[X]$.

Based on these observations, we proceed (still in Section~\ref{invariant}) to derive an estimator for the Gram matrix of $X$ from the given observations~\eqref{eq::description}---this also requires estimating the noise level $\sigma$, which we discuss. From the estimated Gram matrix, we construct an estimator for the equivalence class $[X]$.
Since we only have access to a finite number $N$ of observations, we can only hope to recover an approximation of the Gram matrix. 
Accordingly, in Section~\ref{sec:stability} we study the sensitivity of the mapping from the Gram matrix to the sought equivalence class $[X]$. This reduces to showing stability of the factorization of a rank-$d$ positive semidefinite matrix. To address this question, we propose a new analysis of such matrix factorization, with a geometric proof.

In Section~\ref{sec:statisticaloptimality}, we show that the proposed approach is statistically optimal, that is, it makes the best use of available samples.
Moreover, we show that the mean squared error (MSE) of our estimator behaves like $O\left(\sigma^2/N + \sigma^4/N\right)$, which is shown to be  adaptively optimal.
Indeed, this highlights the existence of two regimes: at low SNR, our estimator's MSE behaves like $O(\sigma^{4}/N)$,
while at high SNR we obtain an MSE of order $O(\sigma^{2}/N)$ which matches the regime we would get if the rotations were known. In particular, the MSE can be driven to zero at \emph{any} noise level, provided the number of observations $N$ is sufficiently large. We give an explicit characterization of those two regimes and we support our claim with numerical simulations in Section~\ref{sec:XP}. 

To make sense of MSE in estimating $[X]$, we need a distance on the quotient space. A natural choice consists in computing the Frobenius distance between two aligned representatives, that is,
\begin{equation}\label{eq::distance}
	\ro{[X_{1}],[X_{2}]} = \min_{Q \in \Od}\norm{X_{1} - QX_{2}},
\end{equation}
where $\norm{X} = \sqrt{\Tr{X^T X}}$. We sometimes write simply $\rho(X_1, X_2)$, where it is clear that we mean the distance between $[X_1]$ and $[X_2]$.
Computing this distance is known as the orthogonal Procrustes problem: it can be done efficiently via singular value decomposition (SVD)~\citep{Schonemann66Procrustes} (see also Appendix~\ref{distance}). 

\subsection*{Related work}


Procrustes problems consist in finding correspondences between shapes that have been transformed through translation, rotation or dilation~\citep{Schonemann66Procrustes,tenberge1977procrustes}. They notably find application in multivariate analysis~\citep{Procrustes62,AppliedMultivariate76}, multidimensional scaling~\citep{MDS05}, computer vision~\citep{Zhang00CameraCalibration} and natural language processing~\citep{XingWangLiuLin15OrthogonalEmbeddings,Smithetal17OfflineBilingualWordvectors,GraveJoulinBerthet18WassersteinProcrustes}. 
An extensive survey on applications of the Procrustes problem can be found in~\citep{Procrustes05}.

A particular line of work on such problems has focused on estimating each rotation before estimating the orbit (that is, equivalence class) of the matrix $X$.
In particular, the  MLE~\eqref{eq::MLE} is cast into the following non-convex optimization problem:
\begin{equation}\label{eq::Gro1}
\min_{\hat{Q}_{1},\ldots,\hat{Q}_{N} \in \Od} \sum_{i \neq j}\norm{\hat{Q}_{i}Y_{i} - \hat{Q}_{j}Y_{j}}^{2},
\end{equation}
or equivalently:
\begin{equation}\label{eq::Gro2}
\max_{\hat{Q}_{1},\ldots,\hat{Q}_{N} \in \Od} \sum_{i \neq j}\dotprod{\hat{Q}_{i}Y_{i},\hat{Q}_{j}Y_{j}},
\end{equation}
where $\dotprod{A,B} = \Tr{A^T B}$ is the inner product we use throughout.
Among many, this problem was investigated by Nemirovski~\citep{nemirovski2007quadratic}
and Man-Cho So~\citep{Mansocho2010momentinequalities}.
A particularization of this problem amounts to the little Grothendieck problem~\citep{khot2012grothendieck,naor2013efficient}.
A natural way to deal with the non-convexity and ensuing computational complexity
is to study a convex relaxation of~\eqref{eq::Gro2}.
Following the line initiated by the seminal work of Goemans \& Williamson~\citep{goemans1995maxcut}, Bandeira et al. \citep{bandeira2016Grothendieck}
and many others investigated such semidefinite relaxations.
In contrast, we focus on a noise regime where $Q_i$ s cannot be estimated; hence these approaches cannot succeed.

The problem we consider finds its original motivation in cryo-electron microscopy (cryo-EM), an imaging technique used in structural biology to estimate the three-dimensional shape of a molecule from two-dimensional projections of that molecule under random and unknown orientations---see~\citep{Singer2018MathematicsFC} for a review of mathematical aspects of this task.
Our setting and approach are also closely connected to the multi-reference alignment problem and recent literature on the topic, where noisy realizations of a randomly, cyclically
shifted version of a vector are observed~\citep{Bandeiraetal14MRASDP}.

Close to the approach taken in the present work, Giannakis~\citep{Giannakis89Reconstruction}, Sadler and Giannakis~\cite{SadlerGiannakis92reconstruction} 
and more recently Bendory et al.~\citep{bendory2018bispectruminversion} and Abbe et al.~\citep{Abbeetal18AperiodicMRA} among others,
consider estimating moments of the signal that are invariant under cyclic shifts.
The advantage of such formulation is its validity for any SNR regime, provided sufficiently many samples are available.
In particular, this approach has been shown to be optimal both in terms of rate of estimation~\citep{BandeiraRigolletWeed2018RatesMRA} and sample complexity~\citep{Perryetal2017SampleComplexityMRA}. 

\section{Invariant features approach}\label{invariant}


We focus on estimating $[X]$ from samples drawn according to~\eqref{eq::description}, disregarding the variables $Q_1, \ldots, Q_N$. Based on the discussion above, it is apparent that any method for estimating $[X]$ which (explicitly or implicitly) relies on estimating the rotations reliably must fail beyond a certain noise level. Hence, 
we take a different approach.
For each observation $Y_i$, we compute polynomial functions of $Y_i$ that, aside from the noise $E_i$, are invariant under the rotation $Q_i$. Such functions are called \emph{invariant features}. Then, our estimation problem reduces to that of estimating $[X]$ from the estimated invariant features. This approach completely bypasses estimation of the latent variables. 
We first explain the method with some background below, before showing that it does not break down at high noise levels.

\subsection{Invariant polynomials}

Invariant theory is concerned with polynomials that are invariant under some group action. In our case, this specializes to the following central definition.
\begin{definition}
A multivariate polynomial $p \colon \mathbb{R}^{d \times k} \to \mathbb{R}$ is said to be invariant under the action of the orthogonal group if, for all $X \in \mathbb{R}^{d \times k}$ and for all $Q \in \Od$,
\begin{align*}
p(QX) & = p(X).
\end{align*}
\end{definition}
The goal of this section is to identify all invariant polynomials in our specific setting. A standard observation in invariant theory is that it is sufficient to consider \emph{homogeneous} invariant polynomials.
\begin{proposition}
Consider a multivariate polynomial $p$ of degree $r$, decomposed into a sum of homogeneous parts $p_1, \ldots, p_r$ where $p_i$ has degree $i$:
\begin{align*}
	p(X) & = p_{0} + p_{1}(X) + \cdots + p_{r}(X).
\end{align*}
If $p$ is invariant under the action of the orthogonal group, that is, if for all $Q \in \Od$, $p(QX) \equiv p(X)$,
then each homogeneous part $p_i$ is itself invariant under that action.
\end{proposition}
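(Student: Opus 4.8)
The plan is to exploit the fact that the orthogonal group action commutes with scaling of the argument: replacing $X$ by $tX$ for a scalar $t$ interacts with each homogeneous part $p_i$ by the factor $t^i$. Concretely, fix an arbitrary $Q \in \Od$ and an arbitrary $X \in \mathbb{R}^{d \times k}$, and consider the two polynomials in the single real variable $t$ given by $t \mapsto p(tX)$ and $t \mapsto p(tQX)$. Because $p(QX') = p(X')$ for every $X'$ (applying the hypothesis with $X' = tX$), these two polynomials in $t$ are identically equal. On the other hand, since $p_i$ is homogeneous of degree $i$, we have $p_i(tX) = t^i p_i(X)$ and $p_i(tQX) = t^i p_i(QX)$, so the two polynomials in $t$ are
\[
\sum_{i=0}^{r} t^i p_i(X) \qquad \text{and} \qquad \sum_{i=0}^{r} t^i p_i(QX).
\]
Two univariate polynomials that agree for all real $t$ must have equal coefficients, hence $p_i(X) = p_i(QX)$ for every $i$. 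Since $X$ and $Q$ were arbitrary, each $p_i$ is invariant.

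The key steps, in order: (1) note the scaling identity $p_i(tX) = t^i p_i(X)$, which is just the definition of a degree-$i$ homogeneous polynomial; (2) apply the invariance hypothesis to the point $tX$ to get $p(tQX) = p(tX)$ for all $t$; (3) expand both sides into homogeneous parts and read off that $\sum_i t^i p_i(QX) = \sum_i t^i p_i(X)$ as an identity in $t$; (4) match coefficients of like powers of $t$ to conclude $p_i(QX) = p_i(X)$. Step (4) uses only that a nonzero univariate real polynomial has finitely many roots, so an identity holding for all $t \in \mathbb{R}$ (indeed even for $r+1$ distinct values of $t$) forces coefficient-wise equality.

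I do not expect a genuine obstacle here; the statement is essentially a bookkeeping argument. The one point that deserves a sentence of care is the legitimacy of matching coefficients: the coefficients $p_i(X)$ and $p_i(QX)$ are fixed real numbers once $X$ and $Q$ are fixed, so the two expressions are honest elements of $\mathbb{R}[t]$, and equality as functions on $\mathbb{R}$ implies equality in $\mathbb{R}[t]$. An alternative, equivalent route is to differentiate $p(tQX) - p(tX)$ repeatedly in $t$ and evaluate at $t = 0$, which isolates $i!\,(p_i(QX) - p_i(X))$ at the $i$-th derivative; this avoids invoking the polynomial identity lemma but is otherwise the same idea. Either way the proof is short.
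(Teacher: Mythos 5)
Your proof is correct and follows essentially the same route as the paper: the paper expands $p(X) = p(QX)$ into homogeneous parts and asserts that they can be identified degree by degree, whereas you make that identification explicit via the scaling substitution $X \mapsto tX$ and matching powers of $t$. This is the standard justification of the uniqueness of the homogeneous decomposition, so your version simply fills in a step the paper treats as known.
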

\begin{proof}
By invariance of $p$, we obtain for all $X$ and orthogonal $Q$ that
\begin{align*}
p_{0} + p_{1}(X) + \cdots + p_{r}(X) = p(X) = p(QX) = p_{0} + p_{1}(QX) + \cdots + p_{r}(QX).
\end{align*}
By identifying the homogeneous parts of the polynomials, we obtain for all $i$ that $p_{i}(X) \equiv p_{i}(QX)$ for all $Q \in \Od$, hence each $p_i$ is itself invariant.
\end{proof}

Homogeneous invariant polynomials necessarily have even degree. In particular, there are no interesting invariant polynomials of degree zero or one.
\begin{proposition}
No homogeneous polynomial of odd degree is invariant under orthogonal group action.
\end{proposition}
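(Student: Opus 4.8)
The plan is to exploit the fact that $\Od$ contains the element $-I_d$ (negation is orthogonal, since $(-I_d)^T(-I_d) = I_d$), and to combine this with homogeneity. First I would let $p$ be a homogeneous invariant polynomial of degree $m$. Applying invariance with the specific choice $Q = -I_d \in \Od$ gives $p(-X) = p(X)$ for all $X \in \mathbb{R}^{d \times k}$. On the other hand, homogeneity of degree $m$ means exactly that $p(-X) = p((-1)X) = (-1)^m p(X)$ for all $X$. Combining the two identities yields $(-1)^m p(X) = p(X)$ for all $X$, i.e. $\left(1 - (-1)^m\right) p(X) \equiv 0$.

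The next step is simply to read off the consequence when $m$ is odd: then $1 - (-1)^m = 2 \neq 0$, so $p(X) \equiv 0$, meaning $p$ is the zero polynomial. Thus no nonzero homogeneous polynomial of odd degree can be invariant; equivalently, if one insists that ``invariant polynomial'' refer to a genuine (nonzero) polynomial, there is none of odd degree. This also immediately recovers the companion remark that homogeneous invariant polynomials have even degree, and that there is nothing of interest in degree one.

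I expect no real obstacle here: the only thing to be careful about is making the statement precise, namely that the conclusion is that such a polynomial must vanish identically (one should not claim a nonzero odd-degree invariant exists and then derive a contradiction without noting that the zero polynomial is trivially invariant of every degree). One small point worth stating explicitly is that $-I_d$ indeed lies in $\Od$ regardless of the parity of $d$, so the argument is dimension-free; this is what makes the use of the reflection-containing orthogonal group (as opposed to $\mathrm{SO}(d)$) harmless. Everything else is the one-line computation above, so the proof is essentially immediate.
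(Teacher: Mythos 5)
Your proof is correct and follows essentially the same route as the paper's: both exploit $Q = -I_d \in \Od$ together with odd-degree homogeneity to force $p \equiv 0$. The only cosmetic difference is that the paper phrases the argument as a contradiction (the zero polynomial cannot have odd degree), whereas you argue directly that any such $p$ vanishes identically; the substance is identical.
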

\begin{proof}
By contradiction, let $p(X)$ be a homogeneous polynomial of odd degree, invariant under orthogonal group action. Then, $p(X) = p(QX)$ for all orthogonal $Q$. This holds in particular for $Q = -I_d$, so that $p(X) = p(-X) = -p(X)$ (we used that the degree is odd in the last equality). Thus, $p(X) = 0$ for all $X$, which contradicts the fact that $p$ has odd degree.
\end{proof}

We now give an elementary statement of a key property of degree-two invariant polynomials: they can be expressed as a linear function of the \textit{Gram matrix} of $X$, namely $X^{T}X$. We use the notation $[k] = \{1, \ldots, k\}$ and $\langle A, B \rangle = \mathrm{Tr}(A^T B)$.
\begin{proposition}
Any 
homogeneous polynomial $p(X)$  of degree two that is invariant under the action of the orthogonal group is a linear combination of scalar products between the column vectors of $X$, denoted by $x_{1},\ldots,x_{k}$.
In other words, there exist coefficients $m_{ij}$ for all $i,j \in [k]$ such that
\begin{equation}\label{eq::invPoly}
p(X) = \sum_{i,j}m_{ij} \langle x_{i}, x_{j} \rangle = \langle M, X^T X \rangle.
\end{equation}
\end{proposition}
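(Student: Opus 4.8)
The plan is to exploit only a small, finite subgroup of $\Od$---namely the signed permutation matrices---and to show that invariance under this subgroup already forces the claimed form; invariance under all of $\Od$ is then more than enough. (One can check afterwards that every polynomial of the form $\langle M, X^T X\rangle$ is indeed $\Od$-invariant, so these are exactly the degree-two invariants, though the proposition only asks for one direction.)

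First I would write $p$ as a quadratic form in the entries of $X$. Since $p$ is homogeneous of degree two, there are coefficients $c_{ia,jb}$, which we may take symmetric ($c_{ia,jb}=c_{jb,ia}$), with indices $i,j\in[d]$ over rows and $a,b\in[k]$ over columns, such that
\[
p(X) = \sum_{i,j\in[d]}\sum_{a,b\in[k]} c_{ia,jb}\, X_{ia} X_{jb},
\]
where $X_{ia}$ is the $(i,a)$ entry of $X$, so that the $a$-th column $x_a$ has entries $X_{1a},\dots,X_{da}$. Next, I apply the orthogonal matrices $Q=\mathrm{diag}(\varepsilon_1,\dots,\varepsilon_d)$ with $\varepsilon_l\in\{+1,-1\}$, for which $(QX)_{ia}=\varepsilon_i X_{ia}$. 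Invariance $p(QX)=p(X)$ becomes the polynomial identity $\sum_{i,j,a,b} c_{ia,jb}(\varepsilon_i\varepsilon_j-1)X_{ia}X_{jb}\equiv 0$ in the entries of $X$, for each fixed sign vector $\varepsilon$. Matching the coefficient of each monomial (the monomial $X_{ia}X_{jb}$ with $(i,a)\neq(j,b)$ receives contributions from both $c_{ia,jb}$ and $c_{jb,ia}$) gives $c_{ia,jb}(\varepsilon_i\varepsilon_j-1)=0$ for all $\varepsilon$; choosing $\varepsilon$ to flip exactly the $i$-th coordinate forces $c_{ia,jb}=0$ whenever $i\neq j$. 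Hence $p(X)=\sum_{i\in[d]}\sum_{a,b} c_{ia,ib}X_{ia}X_{ib}$.

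Finally, I apply the permutation matrices $Q_\pi$ defined by $(Q_\pi X)_{ia}=X_{\pi(i),a}$ for $\pi$ a permutation of $[d]$. Invariance and reindexing give $c_{\pi(i)a,\pi(i)b}=c_{ia,ib}$ for all $\pi$ and all $i,a,b$, so $c_{ia,ib}$ does not depend on the row index $i$; write $m_{ab}$ for the common value and $M=(m_{ab})_{a,b\in[k]}$. Then
\[
p(X)=\sum_{i\in[d]}\sum_{a,b\in[k]} m_{ab}\, X_{ia}X_{ib}=\sum_{a,b} m_{ab}\langle x_a,x_b\rangle=\langle M, X^T X\rangle,
\]
as claimed.

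The argument is mostly bookkeeping; the only step needing care is the passage from the functional identity $p(QX)=p(X)$ to equations on the coefficients---this is legitimate because, for each fixed $Q$, both sides are polynomials in the entries of $X$ agreeing everywhere, hence having equal coefficients, and one must remember the twofold contribution of off-diagonal monomials under the symmetric convention. A slightly slicker alternative avoids choosing explicit group elements: average over Haar measure, $p(X)=\int_{\Od} p(QX)\,dQ$, and use the second-moment identity $\int_{\Od} Q_{il}Q_{jm}\,dQ=\tfrac{1}{d}\delta_{ij}\delta_{lm}$ to collapse the quadratic form directly onto $X^T X$; this produces the same $M$ up to the normalization $\tfrac{1}{d}$.
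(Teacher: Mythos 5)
Your proof is correct and takes essentially the same route as the paper's: both expand $p$ as a quadratic form in the entries of $X$, then invoke diagonal $\pm 1$ matrices to annihilate the cross-row coefficients and permutation matrices of $\Od$ to equate the remaining diagonal ones, forcing the coefficient tensor into the form $M\otimes I_d$. The only difference is notational---you work coordinate-wise with $c_{ia,jb}$ while the paper uses the Kronecker/block form $A=(I_k\otimes Q)^T A (I_k\otimes Q)$---and your closing remark about Haar averaging is a nice slicker alternative, but not the argument you actually carried out.
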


\begin{proof}
Any homogeneous polynomial of degree two in $X$ can be written as
\begin{align*}
p(X) &= \langle \vect{X}\vect{X}^{T}, A \rangle, 
\end{align*}
where $\vect{X}$ vectorizes a matrix by stacking its columns, and $A$ is a coefficient matrix of size $dk \times dk$. Using the property $\vect{ABC} = (C^T \otimes A)\vect{B}$ where $\otimes$ denotes the Kronecker product, we find that
\begin{align*}
p(QX) &= \langle (I_{k} \otimes Q)\vect{X}\vect{X}^{T} (I_{k} \otimes Q)^{T}, A \rangle \\
&= \langle \vect{X}\vect{X}^{T} , (I_{k} \otimes Q)^{T}A (I_{k} \otimes Q) \rangle. 
\end{align*}
If $p$ is invariant, then $p(X) = p(QX)$ for all $X$ and $Q \in \Od$. Hence, by identification:
\begin{align*}
A &= (I_{k} \otimes Q)^{T}A(I_{k} \otimes Q), \quad \text{ for all } Q \in \Od.
\end{align*}
Let $A$ be a block matrix with blocks $A_{ij} \in \R^{d \times d}$ for $i,j$ ranging in 
$[k]$. The above states:
\begin{align*}
A_{ij} &= Q^{T}A_{ij}Q,
\end{align*}
for all $i,j \in [k]$ and $Q \in \Od$. This has the following consequences:
\begin{enumerate}
\item Considering each $Q$ in the set of diagonal matrices with diagonal entries in $\{-1,+1\}$ shows that 
all the elements off the diagonal of $A_{ij}$ are equal to their opposite, implying that $A_{ij}$ is diagonal.
\item Considering each $Q$ in the set of permutation matrices implies that all the diagonal elements are equal.
\end{enumerate}
Thus, $A_{ij} = m_{ij}I_{d}$ and $A = M \otimes I_{d}$, with $M \in \R^{k \times k}$.
As a result:
\begin{align*}
p(X) &= \dotprod{\vect{X},(M \otimes I_{d})\vect{X}} \\
&= \dotprod{\vect{X},\vect{XM^{T}}} \\
&= \dotprod{X,XM^{T}} \\
&= \dotprod{X^{T}X,M}. 
\end{align*}
In words, $p(X)$ is a linear combination of the Gram matrix entries.
\end{proof}

A question that naturally arises is: what are the other invariant polynomials?
The first fundamental theorem of the orthogonal group (see~\citep[Theorem.~14-1.2]{KacNotes} for instance) provides an answer to this question.
\begin{theorem}\label{thm::fundamentalThm}
The functions $g_{ij}\colon(x_{1},\ldots,x_{k}) \mapsto \dotprod{x_{i},x_{j}}, 1 \leq i \leq j \leq k$,
generate the ring of invariant polynomials, that is: any invariant polynomial is a polynomial combination of the degree two invariants $g_{ij}$.
\end{theorem}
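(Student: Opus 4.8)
The statement is the first fundamental theorem (FFT) for $\Od$, so the plan is to peel off the formal reductions and isolate the one genuinely hard, classical, multilinear-algebra fact. \textbf{Reductions.} By the two preceding propositions, it suffices to prove that every \emph{homogeneous} invariant polynomial lies in $\R[g_{ij}]$, and that only even degrees $2m$ occur. To trade polynomials for multilinear objects I would polarize: a homogeneous degree-$2m$ polynomial $p$ has a unique symmetric $2m$-linear form $\tilde p$ on $(\R^{d\times k})^{2m}$ with $\tilde p(X,\dots,X) = (2m)!\,p(X)$, and $p$ is recovered from $\tilde p$ by restitution, so $p\in\R[g_{ij}]$ as soon as $\tilde p$ lies in the algebra generated by the (polarized) invariants. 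Since $X\mapsto QX$ acts columnwise, expanding each matrix slot of $\tilde p$ along the $k$ columns writes $\tilde p$ as a sum over index tuples $(c_1,\dots,c_{2m})\in[k]^{2m}$ of multilinear forms $f_{c_1\cdots c_{2m}}(v_1,\dots,v_{2m})$ in $2m$ vectors $v_t\in\R^d$, each invariant under the \emph{diagonal} action of $\Od$.

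\textbf{The multilinear core.} A multilinear form $f$ on $(\R^d)^{2m}$ is the same data as a tensor $T\in(\R^d)^{\otimes 2m}$ through $f(v_1,\dots,v_{2m}) = \dotprod{T,\,v_1\otimes\cdots\otimes v_{2m}}$, and $\Od$-invariance of $f$ is $\Od$-invariance of $T$ under $Q\cdot(w_1\otimes\cdots\otimes w_{2m}) = Qw_1\otimes\cdots\otimes Qw_{2m}$. The content of the theorem is the linear-algebraic fact that the space of invariant tensors $\big((\R^d)^{\otimes 2m}\big)^{\Od}$ is spanned by the contraction tensors $\delta_\mu$, one for each perfect matching $\mu$ of $\{1,\dots,2m\}$, with $(\delta_\mu)_{i_1\cdots i_{2m}} = \prod_{\{a,b\}\in\mu}\delta_{i_a i_b}$. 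Granting this, each $f_{c_1\cdots c_{2m}}$ is a linear combination of products $\prod_{\{a,b\}\in\mu}\dotprod{v_a,v_b}$; substituting $v_t = x_{c_t}$ and summing over the column tuples expresses $\tilde p$, hence $p$, as a polynomial combination of the $\dotprod{x_i,x_j}=g_{ij}$, as required.

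\textbf{The hard part, and how I would attack it.} Proving the tensor statement is the real obstacle; everything above is bookkeeping. Because $\Od$ is compact, the Reynolds operator $T\mapsto\int_{\Od} Q^{\otimes 2m}\,T\,\mathrm{d}Q$ projects onto the invariants, so it would suffice to show that the average of a rank-one tensor $v_1\otimes\cdots\otimes v_{2m}$ always lands in the span of the $\delta_\mu$. Testing invariance first against the signed diagonal matrices forces any invariant $T$ to be supported on index tuples in which each value of $[d]$ appears an even number of times; bringing in permutation matrices and then genuine rotations cuts this down to the matching tensors. This is exactly Schur--Weyl duality for the orthogonal group (the invariant tensors are the image of the Brauer algebra acting by contractions); alternatively one may follow Weyl's original route via the Capelli identity and polarization operators, or run an induction that extracts one factor $\dotprod{v_a,v_b}$ at a time and drops to $2m-2$. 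Since this is the classical FFT, in the write-up I would invoke \citep[Thm.~14-1.2]{KacNotes} for it rather than reproduce the argument.
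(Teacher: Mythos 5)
The paper does not actually prove this theorem; it simply invokes the first fundamental theorem for $\Od$ via the citation \citep[Thm.~14-1.2]{KacNotes}, and you conclude the same way, so the approach is essentially identical. Your intermediate sketch (polarize to multilinear forms, expand along columns to reduce to the diagonal $\Od$-action on $(\R^d)^{\otimes 2m}$, then the fact that the invariant tensors are spanned by perfect-matching contraction tensors) is the correct shape of the classical argument and adds useful context that the paper omits.
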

Thus, invariant polynomials of degree higher than two do not carry further information about $[X]$.
The next natural question is: are the invariants sufficient to fully characterize the equivalence classes?
The following classical theorem from invariant theory provides a positive answer to this question in the case of compact groups (see~\citep[Theorem.~6-2.2]{KacNotes} for instance).
\begin{theorem}\label{thm::carac}(Informal)
The full invariant ring characterizes the equivalence classes.
\end{theorem}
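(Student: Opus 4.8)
The plan is to establish the \emph{orbit separation} property underlying Theorem~\ref{thm::carac}: for a compact group $G$ acting linearly on a finite-dimensional real vector space $V$, two points $u,v\in V$ satisfy $u\sim v$ if and only if $p(u)=p(v)$ for every $G$-invariant polynomial $p$. The ``only if'' direction is immediate from the definition of invariance, so I would focus on the converse and prove it in contrapositive form: assuming $u$ and $v$ lie in distinct orbits, I will exhibit an invariant polynomial with $p(u)\neq p(v)$.

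First I would set up the topology. The orbits $Gu$ and $Gv$ are continuous images of the compact group $G$, hence compact, and disjoint by hypothesis. On the compact set $K=Gu\cup Gv$, the function equal to $0$ on $Gu$ and $1$ on $Gv$ is continuous, since the two pieces are disjoint closed sets covering $K$. By the Stone--Weierstrass theorem, real polynomials are dense in $C(K)$, so I can pick a polynomial $q$ with $q<1/3$ on $Gu$ and $q>2/3$ on $Gv$.

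Then I would symmetrize $q$ via the Reynolds operator. Let $\mu$ be the normalized Haar measure on $G$ and set $\bar q(w)=\int_G q(g\cdot w)\,d\mu(g)$. Three checks complete the argument: (i) $\bar q$ is $G$-invariant, because $\mu$ is translation-invariant; (ii) $\bar q$ is a polynomial, since linearity of the action makes $w\mapsto q(g\cdot w)$ a polynomial of degree at most $\deg q$ with coefficients depending continuously on $g$, and integrating over the compact group keeps us within the finite-dimensional space of polynomials of degree at most $\deg q$; (iii) $\bar q$ separates $u$ and $v$, as $\bar q(u)=\int_G q(gu)\,d\mu(g)\le 1/3$ whereas $\bar q(v)\ge 2/3$. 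Hence $\bar q$ is an invariant polynomial with $\bar q(u)\neq\bar q(v)$.

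The main obstacle I anticipate is point (ii): compactness of $G$ together with the uniform degree bound is precisely what makes the averaging integral converge and stay inside the ring of polynomials---this would break down for a noncompact group---while the rest is soft point-set topology. To conclude, I would specialize to $G=\Od$ acting by $X\mapsto QX$ and invoke Theorem~\ref{thm::fundamentalThm}: the separating invariant $\bar q$ is then a polynomial in the entries of $X^{T}X$, so $[X_1]=[X_2]$ if and only if $X_1^{T}X_1=X_2^{T}X_2$, i.e.\ the Gram matrix is a complete invariant. (For this concrete action one could also avoid the averaging argument altogether: two $d\times k$ matrices with equal Gram matrices have columns with identical pairwise inner products, hence differ by an orthogonal change of coordinates of $\R^{d}$.)
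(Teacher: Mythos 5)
Your proof is correct, and it fills a gap the paper deliberately leaves: the paper states Theorem~\ref{thm::carac} informally and simply cites a reference (a classical result on orbit separation for compact groups), providing no argument of its own.

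The averaging argument you give---compactness of the orbits $Gu$ and $Gv$, Stone--Weierstrass to produce a separating polynomial $q$ on the compact set $Gu \cup Gv$, and then the Reynolds operator $\bar q(w) = \int_G q(g\cdot w)\,d\mu(g)$ to symmetrize---is exactly the standard proof of orbit separation for compact groups, and you flag the right subtlety: linearity of the action keeps the degree bounded, so the Haar integral stays in the finite-dimensional space of polynomials of degree $\le \deg q$, and compactness guarantees the integral converges. That is precisely what would break for a noncompact group. Your final parenthetical remark is also worth highlighting as a genuinely shorter route in this specific setting: for $G = \Od$ acting by $X \mapsto QX$, one does not need the general machinery at all. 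If $X_1^T X_1 = X_2^T X_2$, then $X_2 c = 0 \iff X_1 c = 0$ (compute norms), so the map sending the $i$-th column of $X_2$ to the $i$-th column of $X_1$ is well-defined on the column span and preserves inner products; extending it by any isometry of the orthogonal complement gives $Q \in \Od$ with $X_1 = QX_2$. This is the ``well-known fact'' the paper alludes to immediately after stating Theorems~\ref{thm::fundamentalThm} and~\ref{thm::carac}, and it has the advantage of being elementary and self-contained, at the cost of being specific to this group and action, whereas your Reynolds-operator argument proves the general orbit-separation theorem the paper is quoting.
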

Theorems~\ref{thm::fundamentalThm} and~\ref{thm::carac} combined imply that $X_1^TX_1^{} = X_2^TX_2^{}$ if and only if $[X_1] = [X_2]$, which is a well-known fact here derived through the prism of invariant features.

\subsection{Estimation algorithm} \label{sec:estimationalgorithm}

%

Above, we have shown that the problem of recovering $[X]$ can be reduced (without loss) to that of estimating the Gram matrix $G = X^TX$, with the advantage that the latter is invariant under orthogonal transformations.
We build on this observation to propose a concrete algorithm.
Consider the Gram matrix of an observation as in equation.~\eqref{eq::description}:
\begin{equation}
	Y_{i}^T Y_{i}^{} = X^T X + \sigma\left( X^{T}Q_{i}^{T}E_{i}^{} + E_{i}^{T}Q_{i}^{}X\right) + \sigma^2 E_{i}^T E_{i}^{}.
\end{equation}
By the strong law of large numbers, their empirical mean converges almost surely to their expectation (a characterization of the fluctuations for finite $N$ follows):
\begin{align}
	\hat M_N & = \frac{1}{N}\sum_{i=1}^{N}Y_{i}^{T}Y_{i}^{} \underset{N\to\infty}{\longrightarrow} X^{T}X + d\sigma^{2}I_{k}.
	\label{eq:hatMN}
\end{align}
Here, we used independence of the $Y_i$'s, independence of $Q_i$ and $E_i$ for each $i$, and the fact that individual entries of each $E_i$ are independent with mean zero and unit variance.

If the noise level $\sigma$ is known, we can get an unbiased estimator for $X^T X$ as
\begin{align}
	\hat G_N & = \hat M_N - d\sigma^2 I_k.
	\label{eq:hatGN}
\end{align}
Since $\hat G_N$ is expected to be close to $X^TX$ for large $N$, it is reasonable to consider an estimator $[\tilde X_N]$ for the equivalence class $[X]$ where $\tilde X_N$ is a solution of the optimization problem
\begin{align}
	\min_{\hat{X} \in \mathbb{R}^{d \times k}} \norm{\hat{G}_{N} - \hat{X}^{T}\hat{X}}.
	\label{factor}
\end{align}
Well-known extremal properties of the eigenvalue decomposition of a symmetric matrix tell us that an optimizer $\tilde X_N$ can be obtained by computing $d$ dominant, orthonormal eigenvectors of $\hat G_N$, and scaling them by the square root of their corresponding eigenvalues. Since $\hat G_N$ and $\hat M_N$ share the same eigenvectors, with eigenvalues related through $\lambda_\ell(\hat G_N) = \lambda_\ell(\hat M_N) - d\sigma^2$, this computation can equivalently be executed from $\hat M_N$ directly. This procedure is summarized in Algorithm~\ref{alg:estimation}. Provided $\hat M_N$ has an eigengap separating its $d$th and $(d+1)$st largest eigenvalues, this procedure uniquely defines $[\tilde X_N]$. In Section~\ref{sec:stability}, we argue that such an eigengap exists with high probability if $N$ is sufficiently large, and we bound the error $\rho([X], [\tilde X_N])$.


We note that the Gram matrix estimator $\hat G_N$ could be replaced by a more robust estimator, for example based on median-of-means as favored in~\citep{Bandeira2018EstimationGroupAction}, leveraging work by~\TODO{Nemirovsky and Yudin} \citep{NemirovskyYudin83OptimizationComplexity} and more recently by~\TODO{Joly et al.} \citep{joly2017meanrandomvector}. Such refinements are not necessary under our assumption of Gaussian noise but could be useful for heavy-tailed noise.

\begin{algorithm}[t]
\caption{Estimation algorithm with $\sigma$ given}\label{alg:estimation}
\begin{algorithmic}[1]
	\State Compute the sample mean of the Gram matrices: $\hat{M}_N = \dfrac{1}{N}\displaystyle\sum_{i=1}^{N}Y_{i}^{T}Y_{i}^{}$.
	\State Compute $d$ top eigenvalues $\lambda_1, \ldots, \lambda_d$ of $\hat M_N$ with associated orthonormal eigenvectors $v_1, \ldots, v_d \in \Rk$.
	\State Define the scaling factors $\alpha_i = \sqrt{\max(0, \lambda_i - d\sigma^2)}$ for $i = 1, \ldots, d$.
	\State Form $\tilde X_N \in \Rdk$ with rows $\alpha_i^{} v_i^T$ for $i = 1, \ldots, d$.
Our estimator is $[\tilde X_N]$.
\end{algorithmic}
\end{algorithm}


\subsection{Estimation when $\sigma$ is unknown}

If $\sigma$ is unknown, it can be estimated from the eigenvalues of $\hat M_N$~\eqref{eq:hatMN}. Indeed, in the limit of $N$ going to infinity, the $k-d$ smallest eigenvalues are equal to $d\sigma^2$. For finite $N$, they fluctuate around that value. Thus, with $\lambda_1 \geq \cdots \geq \lambda_k$ 
the eigenvalues of $\hat M_N$, a possible estimator $\hat \sigma_N \geq 0$ for $\sigma$ is defined by
\begin{align}\label{eq::SigmaEstimator}
	d \hat\sigma_N^2 & = \frac{1}{k-d} \sum_{\ell = d+1}^{k} \lambda_\ell = \frac{1}{k-d} \left( \Tr{\hat M_N} - (\lambda_1 + \cdots + \lambda_d) \right),
\end{align}
where the second form is computationally favorable.
The resulting procedure is summarized as Algorithm~\ref{alg:estimationnosigma}.

We mention that in the case where $X$ is centered, that is, when $X\bm{1} = 0$, $\sigma$ can be reliably estimated 
by computing the empirical variance of the $dN$  i.i.d.\ entries of samples $\frac{1}{\sqrt{k}} Y_i \bm{1} \sim \mathcal{N}(0, \sigma^2 I_d)$ for $i = 1,\ldots,N$.


\begin{algorithm}[t]
	\caption{Estimation algorithm with $\sigma$ unknown}\label{alg:estimationnosigma}
	\begin{algorithmic}[1]
		\State Execute steps 1 and 2 of Algorithm~\ref{alg:estimation}.
		\State Define $\hat\sigma_N \geq 0$ such that $\hat\sigma_N^2 = \dfrac{1}{d(k-d)}\Big( \Tr{\hat{M}_N} - (\lambda_1 + \cdots + \lambda_d) \Big)$. 
		\State Define the scaling factors $\alpha_i = \sqrt{\max(0, \lambda_i - d\hat \sigma_N^2)}$ for $i = 1, \ldots, d$.
		\State Execute step 4 of Algorithm~\ref{alg:estimation}.
	\end{algorithmic}
\end{algorithm}

\section{Stability of the estimator} \label{sec:stability}

The consistency of $\hat G_N$~\eqref{eq:hatGN} as an estimator of the Gram matrix is guaranteed by the law of large numbers.
However, for finite $N$, we can only hope to estimate the Gram matrix approximately: we characterize the expected errors here. From this approximate Gram matrix, we obtain our estimator $[\tilde X_N]$ by solving the optimization problem~\eqref{factor} (which is akin to forming a type of thin Cholesky factorization of $\hat G_N$ after projecting the latter to the positive semidefinite matrices): we call this step the \emph{Gram inversion}. To understand the final error on our estimator, we need to study the sensitivity of Gram inversion: we start with this.

\subsection{Sensitivity of Gram inversion}


Consider the function $f([X]) = X^T X$. This is a map from the quotient space 
\begin{align}
	\mathcal{M} = \R^{d \times k}_* / \! \sim,
	\label{eq:calM}
\end{align}
where $\sim$ is the equivalence relation defined in~\eqref{eq::equivalencerelation} and $\R^{d\times k}_*$ is the set of matrices in $\R^{d\times k}$ of full rank $d$, to the set
\begin{align}
	\mathcal{N} = \{ G \in \mathrm{Sym}_{k} : \rank(G) = d, G \succeq 0 \},
	\label{eq:calN}
\end{align}
where $\mathrm{Sym}_{k}$ is the set of symmetric matrices of size $k$ and $G \succeq 0$ means $G$ is positive semidefinite.
In Section~\ref{invariant},  we argued that $[X]$ can be recovered uniquely from $X^T X$, meaning that
$f$ is globally invertible. 
In this section, we are concerned with the sensitivity of the inverse, $f^{-1}$.

A bound on the sensitivity appears in~\citep[Lem.~5.4]{Tuetal2016LowRankProcrustesFlow}. We repeat it here.
\begin{lemma} \label{lem:TuLemma}
	For any $[X], [\tilde X] \in \mathcal{M}$,
	\begin{align*}
		\rho([X], [\tilde X]) & \leq \frac{L}{\sigma_d(X)} \norm{X^T X - \tilde X^T \tilde X}, & \textrm{ with } & & L & = \frac{1}{\sqrt{2(\sqrt{2}-1)}}.
	\end{align*}
\end{lemma}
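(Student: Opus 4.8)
The plan is to transpose the problem and reduce it to a spectral inequality for the symmetric factors of two positive semidefinite matrices. Put $U = X^T$ and $V = \tilde X^T$, so that $U, V \in \R^{k\times d}$ have full rank $d$, $X^TX = UU^T$, $\tilde X^T\tilde X = VV^T$, $\sigma_d(X) = \sigma_d(U)$, and $\rho([X],[\tilde X]) = \min_{R\in\Od}\norm{U - VR}$. Let $R^\star$ attain this minimum, set $W = VR^\star$ and $\Delta = U - W$, so that $\rho([X],[\tilde X]) = \norm{\Delta}$ and $VV^T = WW^T$. Writing the SVD of $U^TV$ shows that the optimal $R^\star$ makes $U^TW = U^TVR^\star$ symmetric and positive semidefinite; equivalently $U^T\Delta$ and $W^T\Delta$ are symmetric. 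These symmetry/positivity facts are the only structural inputs.

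I would first record the polarization identity $UU^T - WW^T = \tfrac12\big[(U+W)\Delta^T + \Delta(U+W)^T\big]$ and compute its Frobenius norm. Because $(U+W)^T\Delta = U^TU - W^TW$ is symmetric (here the symmetry of $U^TW$ is used), the cross term simplifies and one gets the exact identity
\[
	\norm{UU^T - WW^T}^2 = \tfrac12\norm{(U+W)\Delta^T}^2 + \tfrac12\norm{U^TU - W^TW}^2 .
\]
Since $(U+W)^T(U+W) = U^TU + 2U^TW + W^TW \succeq U^TU \succeq \sigma_d(X)^2 I_d$ (using $U^TW\succeq 0$ and $W^TW\succeq 0$), we have $\norm{(U+W)\Delta^T}^2 = \Tr{(U+W)^T(U+W)\Delta^T\Delta} \ge \sigma_d(X)^2\norm{\Delta}^2$, and dropping the second (nonnegative) term already proves the bound with the weaker constant $\sqrt2$ in place of $L$.

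To obtain the sharp $L = 1/\sqrt{2(\sqrt2-1)}$ one must keep the discarded term. Expanding the same identity around $U$ gives the equivalent exact form $\norm{UU^T - WW^T}^2 = 2\Tr{U^TU\,\Delta^T\Delta} + 2\norm{W^T\Delta}^2 - \norm{\Delta^T\Delta}^2$, and the optimality of $R^\star$ supplies, with $A = U^TU \succeq \sigma_d(X)^2 I_d$, $B = W^T\Delta = B^T$, $P = \Delta^T\Delta \succeq 0$, the constraints $U^TW = A - B - P \succeq 0$ and $W^TW = A - 2B - P \succeq 0$. It remains to prove $2\Tr{AP} + 2\norm{B}^2 - \norm{P}^2 \ge 2(\sqrt2-1)\sigma_d(X)^2\Tr{P}$. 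I would reduce this to a one-dimensional statement: for scalars $a\ge\sigma^2 > 0$, $p\ge 0$ and $b$ with $a - b - p\ge 0$ and $(a - 2b - p)p\ge b^2$, one has $2ap + 2b^2 - p^2\ge 2(\sqrt2-1)\sigma^2 p$. For fixed $a,p$ the left side is minimized by taking $\lvert b\rvert$ as small as the constraints allow, which forces $b = 0$ when $p\le a$ and $b = a-p$ when $p > a$; substituting and using $a\ge\sigma^2$ (and, in the second case, minimizing over $a$), the inequality reduces to $p(2a-p)\ge\sigma^2 p\ge 2(\sqrt2-1)\sigma^2 p$ or to $(p - \sqrt2\,\sigma^2)^2\ge 0$ — and the constant $2(\sqrt2-1) = 2/(\sqrt2+1)$ is exactly what makes the latter come out as a perfect square, with leftover slack $3 - 2\sqrt2 = (\sqrt2-1)^2$.

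The step I expect to be the real obstacle is the passage from the matrix inequality to its scalar version, since $A$, $B$, $P$ need not commute and cannot simply be diagonalized together. The way around it is to exploit that, for fixed $B$ and $P$, the quantity $2\Tr{AP} + 2\norm{B}^2 - \norm{P}^2$ is nondecreasing in $A$ in the positive semidefinite order while the constraints only relax as $A$ grows; this lets one push $A$ down to its smallest admissible value, which equals $\sigma_d(X)^2 I_d$ at the extremal configurations, after which everything co-diagonalizes along the eigenvectors of $P$ and the scalar bound applies coordinatewise. A related point is that the sharp constant is a genuinely large-deformation phenomenon: when $\norm{\Delta}\to 0$ the ratio is governed by $\norm{(U+W)\Delta^T}^2\approx 4\norm{U\Delta^T}^2$ and is strictly better than $L$, so the term $\Delta\Delta^T$ (equivalently $P$) cannot be neglected once $\norm{\Delta}$ is comparable to $\sigma_d(X)$. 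This sharp-constant analysis is, in essence, the one of Tu et al., Lem.~5.4.
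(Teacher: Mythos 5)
The paper does not prove this lemma; it cites it verbatim from Tu et al.\ (Lem.~5.4). So there is no in-paper proof to compare against, and your attempt must stand on its own.

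Most of your argument is correct and the overall plan is sound. The transposition, the characterization of the optimal alignment ($U^TW\succeq0$, hence $U^T\Delta$ and $W^T\Delta$ symmetric), the polarization identity, and the exact expansion
\begin{align*}
	\norm{X^TX - \tilde X^T\tilde X}^2 = 2\Tr{AP} + 2\norm{B}^2 - \norm{P}^2
\end{align*}
with $A=U^TU$, $B=W^T\Delta$, $P=\Delta^T\Delta$ all check out. The scalar inequality you state (minimize over $b$; two cases depending on whether $p\le a$ or $p>a$; at $p>a$ substitute $b=a-p$ and minimize over $a$, hitting the extremum at $p=\sqrt2\,\sigma^2$, $a=\sigma^2$, where the inequality becomes $(p-\sqrt2\sigma^2)^2\ge0$) is also correct, and you correctly identify $2(\sqrt2-1)$ as the constant that makes it a perfect square.

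The genuine gap is exactly where you anticipated it: the passage from the matrix inequality to the scalar one. Your proposed justification --- push $A$ down in the semidefinite order to its smallest admissible value, then assert that at the extremum $A=\sigma_d(X)^2 I_d$ and everything co-diagonalizes --- does not hold up. There is no unique smallest admissible $A$ (the semidefinite order is only partial), and in general the constraints $A-B-P\succeq0$ and $\bigl(\begin{smallmatrix}A-2B-P & B\\ B & P\end{smallmatrix}\bigr)\succeq0$ do not permit $A=\sigma_d(X)^2 I_d$ for a fixed feasible pair $(B,P)$; in such cases there is nothing to co-diagonalize. The correct way to close the gap is not by optimizing $A$ but by contracting every constraint onto the eigenvectors of $P$. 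Write $P=\sum_i p_i v_iv_i^T$ and set $a_i=v_i^TAv_i$, $b_i=v_i^TBv_i$. Then $a_i\ge\sigma_d(X)^2$ from $A\succeq\sigma_d(X)^2 I$; $a_i-b_i-p_i\ge0$ from $A-B-P\succeq0$; and applying the block constraint to $(tv_i,v_i)$ for all $t\in\R$ yields, via the discriminant, $(a_i-2b_i-p_i)p_i\ge b_i^2$. These are precisely your scalar constraints, so your one-dimensional inequality gives $2a_ip_i+2b_i^2-p_i^2\ge 2(\sqrt2-1)\sigma_d(X)^2p_i$ for each $i$. Summing and using $\sum_i(v_i^TBv_i)^2\le\sum_{ij}(v_i^TBv_j)^2=\norm{B}^2$ yields $2\Tr{AP}+2\norm{B}^2-\norm{P}^2\ge 2(\sqrt2-1)\sigma_d(X)^2\Tr{P}$, which is the claimed bound. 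With that repair, the proof is complete.
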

\noindent This result establishes a Lipschitz constant for $f^{-1}$ in the vicinity of $[X]$, with respect to the distance $\rho$ on $\mathcal{M}$~\eqref{eq::distance} and the Frobenius distance on $\mathcal{N}$.
Through a geometric argument, we confirm that the coefficient $\sigma_d(X)$ in the denominator cannot be avoided, and we show $L$ must be at least $1/\sqrt{2}$ (that is $0.71..$ compared to $1.10..$ above). Then, we lean on Lemma~\ref{lem:TuLemma} to obtain a new bound with (essentially) that optimal constant. To do so, we use the proof mechanics proposed by~\TODO{Chang and Stehle} \citep{ChangStehle10RigorousBoundsMatrixFactorization} in their study of the stability of the Cholesky decomposition of (strictly) positive definite matrices.

We equip the quotient space $\mathcal{M}$ with a smooth structure as a Riemannian quotient manifold of $\Rdk_*$ with the standard inner product $\dotprod{\cdot, \cdot}$~\citep[\S3.4]{AMS08}.\footnote{In contrast, the quotient space $\Rdk/\!\sim$ (without rank restriction) does not admit such a smooth structure, because not all its equivalence classes have the same dimension as submanifolds of $\Rdk$.} Likewise, we endow $\mathcal{N}$ with a smooth structure as a Riemannian submanifold of $\mathrm{Sym}_k$ with the standard inner product $\dotprod{\cdot, \cdot}$, as in~\citep{vandereycken2009psdfixedrank}. With these smooth structures, $f \colon \mathcal{M} \to \mathcal{N}$ is a smooth function. Detailed background on both geometries and their relations can be found in~\citep{MassartAbsil18QuotientPSD}.

The distance $\rho$ happens to be the geodesic distance on $\mathcal{M}$~\citep{MassartAbsil18QuotientPSD}. Furthermore, since $\mathcal{N}$ is a Riemannian submanifold of $\mathrm{Sym}_{k}$ equipped with the trace inner product, the Frobenius distance between close-by points of $\mathcal{N}$ is an excellent approximation for the geodesic distance between them. As a result, locally around $X^T X$, the operator norm (the largest singular value) of the differential of $f^{-1}$ at $X^TX$ reveals the local Lipschitz constant of $f^{-1}$ with respect to these distances.

The inverse function theorem~\citep[Thm.~4.5]{lee2012smoothmanifolds} states that the differential of $f^{-1}$ at $X^TX$ is the inverse of the differential of $f$ at $[X]$. Accordingly, we first study the singular values of the differential of $f$ at $[X]$.
%
%

As a preliminary step, for $X \in \R^{d \times k}_*$, 
consider the differential $\mathcal{L}_X$ of the map $X \mapsto X^T X$ (its relation to $f$ is elucidated below):
\begin{align}\label{eq::operator}
\begin{array}{ccccc}
\mathcal{L}_{X} & : &  \R^{d \times k}  & \to & \mathrm{Sym}_{k} \\
 & & \dot X & \mapsto &  X^T \dot X + \dot X^T X.
\end{array}
\end{align}
%
Clearly, the following subspace is included in the kernel of $\mathcal{L}_X$:
\begin{align}
	\mathrm{V}_X & = \{ \Omega X : \Omega + \Omega^T = 0 \}.
\end{align}
We can thus restrict our attention to the orthogonal complement of $\mathrm{V}_X$, which we denote by $\mathrm{H}_X = (\mathrm{V}_X)^\perp$. By definition, $\dot X \in \R^{d \times k}$ is orthogonal to $\mathrm{V}_X$ if and only if $\dotprod{\dot X,\Omega X} = 0$ for all skew-symmetric matrices $\Omega$, hence:
\begin{align}\label{eq::horizontal}
	\mathrm{H}_X & = \{ \dot X \in \R^{d \times k} : \dot X X^T = X \dot X^T \}.
\end{align}
Restricted to $\mathrm{H}_X$, the smallest singular value of $\mathcal{L}_X$ is positive.
\begin{proposition}\label{lem::operatorInverse}
	Given $X \in \R^{d \times k}_*$, with notation as above, consider the restriction of $\mathcal{L}_X$ as an operator from $\mathrm{H}_X$ to $\mathcal{L}_X(\mathrm{H}_X)$. That operator is invertible and its smallest singular value is $\sqrt{2}\sigma_d(X)$.
\end{proposition}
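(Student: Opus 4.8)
The plan is to compute the smallest singular value as the minimum of the quotient $\norm{\mathcal{L}_X\dot X}/\norm{\dot X}$ over nonzero $\dot X\in\mathrm{H}_X$: this minimum is by definition the smallest singular value of the restricted operator (which is onto $\mathcal{L}_X(\mathrm{H}_X)$ by construction), and showing it is positive confirms the operator is invertible, as claimed. The computation hinges on one choice of coordinates on $\mathrm{H}_X$. Decompose $\R^k$ as $\mathrm{row}(X)\oplus\mathrm{row}(X)^\perp$ and, accordingly, write every $\dot X\in\R^{d\times k}$ as $\dot X = \Gamma X + \dot X_\perp$, where $\Gamma\in\R^{d\times d}$ is uniquely determined (using that $X$ has full row rank) and $\dot X_\perp X^T = 0$ (the rows of $\dot X_\perp$ lying in $\mathrm{row}(X)^\perp$). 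A direct calculation with $\mathcal{L}_X\dot X = X^T(\Gamma+\Gamma^T)X + (X^T\dot X_\perp + \dot X_\perp^T X)$ then reveals, relative to the same splitting of $\R^k$, a block structure: $X^T(\Gamma+\Gamma^T)X$ occupies the $\mathrm{row}(X)$-diagonal block, $X^T\dot X_\perp$ the off-diagonal block, and the complementary diagonal block vanishes. Hence
\[ \norm{\mathcal{L}_X\dot X}^2 = \norm{X^T(\Gamma+\Gamma^T)X}^2 + 2\norm{X^T\dot X_\perp}^2, \qquad \norm{\dot X}^2 = \norm{\Gamma X}^2 + \norm{\dot X_\perp}^2, \]
the second identity holding because the two pieces of $\dot X$ are orthogonal. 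Finally, the horizontal condition $\dot X X^T = X\dot X^T$ unravels to ``$\Gamma XX^T$ symmetric'' and imposes nothing on $\dot X_\perp$. Thus the quotient splits into two uncoupled pieces, and its minimum over $\mathrm{H}_X$ equals the smaller of the two separate minima.

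The $\dot X_\perp$ piece is straightforward and is the one that produces the constant $\sqrt 2$. Writing $G_0 = XX^T\succ0$, we have $\norm{X^T\dot X_\perp}^2 = \Tr{\dot X_\perp^TG_0\dot X_\perp}\ge\sigma_d(X)^2\norm{\dot X_\perp}^2$, and equality is attained by taking each column of $\dot X_\perp$ proportional to a bottom eigenvector of $G_0$ while keeping its rows in $\mathrm{row}(X)^\perp$ --- which is possible precisely because $k>d$. So this piece contributes $2\sigma_d(X)^2$.

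For the $\Gamma$ piece I would diagonalize $G_0 = W\Lambda^2W^T$ with $\Lambda = \operatorname{diag}(\sigma_1(X),\dots,\sigma_d(X))$ and set $\widehat\Gamma = W^T\Gamma W$, turning the constraint into $\widehat\Gamma_{ij}\,\sigma_j(X)^2 = \widehat\Gamma_{ji}\,\sigma_i(X)^2$. Substituting and using this relation to eliminate $\widehat\Gamma_{ji}$ gives
\[ \norm{\Gamma X}^2 = \sum_{i,j}\widehat\Gamma_{ij}^2\,\sigma_j(X)^2, \qquad \norm{X^T(\Gamma+\Gamma^T)X}^2 = \sum_{i,j}\widehat\Gamma_{ij}^2\,\sigma_j(X)^2\,\frac{\bigl(\sigma_i(X)^2+\sigma_j(X)^2\bigr)^2}{\sigma_i(X)^2}. \]
So the quotient for this piece is a weighted average, with nonnegative weights $\widehat\Gamma_{ij}^2\sigma_j(X)^2$, of the numbers $\bigl(\sigma_i(X)^2+\sigma_j(X)^2\bigr)^2/\sigma_i(X)^2\ge4\sigma_j(X)^2\ge4\sigma_d(X)^2$ (the first bound being the arithmetic--geometric mean inequality, with equality at $i=j=d$). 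Hence this piece contributes $4\sigma_d(X)^2$, and the overall minimum of $\norm{\mathcal{L}_X\dot X}^2/\norm{\dot X}^2$ is $\min\{4\sigma_d(X)^2,2\sigma_d(X)^2\}=2\sigma_d(X)^2$; taking square roots gives $\sqrt2\,\sigma_d(X)$.

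The only delicate point is the $\Gamma$ piece, and it is delicate precisely because the horizontal constraint cannot be waived: for an unconstrained $\Gamma$ one may take $\Gamma$ skew-symmetric and commuting with $G_0$ (possible whenever $G_0$ has a repeated eigenvalue), so that $\Gamma+\Gamma^T=0$ while $\Gamma X\ne0$ and the quotient collapses to $0$. It is exactly the relation $\widehat\Gamma_{ij}\sigma_j(X)^2=\widehat\Gamma_{ji}\sigma_i(X)^2$ that links $\widehat\Gamma_{ij}$ and $\widehat\Gamma_{ji}$ and produces the balanced factor $(\sigma_i(X)^2+\sigma_j(X)^2)^2/\sigma_i(X)^2$; getting this bookkeeping right, rather than the routine norm estimates, is the crux. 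I would also record that the value $\sqrt2\,\sigma_d(X)$ relies on $k>d$: if $k=d$ the $\dot X_\perp$ block is absent and the same argument yields $2\sigma_d(X)$ instead.
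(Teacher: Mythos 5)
Your proof is correct, and it takes a genuinely different route from the paper's. The paper works in the thin-SVD coordinates $X = U\Sigma V^T$, parameterizes $\dot X = UAV^T + UBV_\perp^T$, builds an explicit orthonormal basis of $\mathrm{H}_X$ in three families, shows their images under $\mathcal{L}_X$ remain orthogonal, and reads off \emph{all} singular values of the restriction (namely $2\sigma_i$, $\sqrt{2(\sigma_i^2+\sigma_j^2)}$ for $i<j$, and $\sqrt{2}\sigma_i$ with multiplicity $k-d$). You instead decompose $\dot X = \Gamma X + \dot X_\perp$ with $\dot X_\perp X^T=0$ --- essentially the same splitting, since $\Gamma X$ corresponds to $UAV^T$ and $\dot X_\perp$ to $UBV_\perp^T$ --- but rather than constructing singular vectors you minimize the Rayleigh quotient $\norm{\mathcal{L}_X\dot X}^2/\norm{\dot X}^2$ directly, exploiting that both numerator and denominator split into two uncoupled pieces. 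Your diagonalization of the $\Gamma$ piece and the weighted-average argument with the AM--GM bound $(\sigma_i^2+\sigma_j^2)^2/\sigma_i^2 \ge 4\sigma_j^2$ are correct, as is the bound $2\sigma_d(X)^2$ on the $\dot X_\perp$ piece. The paper's approach is more informative (the full spectrum), yours is leaner (just the extremal value needed). Your observation that the constant degrades to $2\sigma_d(X)$ when $k=d$ is a genuine and useful catch: the paper's proposition, read literally, overstates slightly in that edge case (its own proof produces the correct value, since the third family of basis elements is then empty, but the stated constant $\sqrt{2}\sigma_d(X)$ is only a lower bound there); this has no downstream impact because the proposition is used only as a lower bound on the smallest singular value.
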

\begin{proof}
See Appendix \ref{CholAppendix}. 
\end{proof}

Using tools from differential geometry~\citep[Section 3.5.8]{AMS08}\citep{MassartAbsil18QuotientPSD}, it can be shown that $\mathrm{H}_X$ (equipped with the standard inner product) is isometric to the tangent space of $\mathcal{M}$ at $[X]$, so that the singular values of $\mathcal{L}_X$ on $\mathrm{H}_X$ are equal to the singular values of the differential of $f$ at $[X]$. Thus, calling upon the inverse function theorem, we conclude that the largest singular value of the differential of $f^{-1}$ at $X^T X$ is $1/\sqrt{2}\sigma_d(X)$. In turn, this shows the constant $L$ in Lemma~\ref{lem:TuLemma} must be at least $\frac{1}{\sqrt{2}}$, and the coefficient $\sigma_d(X)$ cannot be removed.

Lemma~\ref{lem:TuLemma} and Proposition~\ref{lem::operatorInverse} together allow us to show our main result regarding the local sensitivity of Gram inversion, using a technique by Chang \& Stehle~\citep{ChangStehle10RigorousBoundsMatrixFactorization}.

\begin{theorem}\label{thm::cholesky}

Consider two matrices $X, \tilde{X} \in \R^{d\times k}_*$. If their Gram matrices $G = X^T X$ and $\tilde{G} = \tilde{X}^T\tilde{X}$ are close, specifically, if
 \begin{align*}
 \norm{G - \tilde{G}} &\leq \frac{\sigma_{d}^{2}(X)}{2},
 \end{align*}
then the equivalence classes must be close too:
\begin{equation}\label{eq::stabChol}
 \rho([X],[\tilde{X}]) \leq \frac{\sigma_{d}(X)}{\sqrt{2}}\left(1 - \sqrt{1 - \frac{2\norm{G - \tilde{G}}}{\sigma_{d}^{2}(X)}} \right),
\end{equation}
where $\rho$ is the distance between equivalence classes defined in~\eqref{eq::distance}.
\end{theorem}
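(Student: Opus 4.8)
The plan is to linearize the Gram map $X \mapsto X^TX$ at $X$, lower-bound its differential using Proposition~\ref{lem::operatorInverse}, and then use the crude global Lipschitz estimate of Lemma~\ref{lem:TuLemma} to pick out the correct root of the quadratic inequality this produces --- this last manoeuvre is the Chang--Stehle-style bootstrapping the introduction refers to.

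First I would reduce to an aligned pair of representatives. Let $Q^\star \in \Od$ attain $\rho([X],[\tilde{X}]) = \min_{Q \in \Od}\norm{X - Q\tilde{X}}$ and replace $\tilde{X}$ by $Q^\star\tilde{X}$: this changes neither $[\tilde{X}]$ nor $\tilde{G} = \tilde{X}^T\tilde{X}$, but now $\rho([X],[\tilde{X}]) = \norm{X - \tilde{X}}$. Writing $\Delta = \tilde{X} - X$ and $r = \norm{\Delta}$, the first-order optimality condition of the orthogonal Procrustes problem states that $X\tilde{X}^T$ is symmetric (in fact positive semidefinite), which is exactly the identity $\Delta X^T = X\Delta^T$; hence $\Delta \in \mathrm{H}_X$, the subspace on which Proposition~\ref{lem::operatorInverse} applies.

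Next I would expand $\tilde{G} - G = X^T\Delta + \Delta^T X + \Delta^T\Delta = \mathcal{L}_X(\Delta) + \Delta^T\Delta$. Since $\Delta \in \mathrm{H}_X$, Proposition~\ref{lem::operatorInverse} gives $\norm{\mathcal{L}_X(\Delta)} \geq \sqrt{2}\,\sigma_d(X)\,r$, while $\norm{\Delta^T\Delta} \leq \normop{\Delta}\norm{\Delta} \leq r^2$. The triangle inequality then yields $\sqrt{2}\,\sigma_d(X)\,r \leq \norm{G - \tilde{G}} + r^2$, i.e.\ $r^2 - \sqrt{2}\,\sigma_d(X)\,r + \norm{G - \tilde{G}} \geq 0$. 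Under the hypothesis $\norm{G - \tilde{G}} \leq \sigma_d^2(X)/2$ this quadratic in $r$ has real roots $r_\pm = \tfrac{\sigma_d(X)}{\sqrt{2}}\bigl(1 \pm \sqrt{1 - 2\norm{G - \tilde{G}}/\sigma_d^2(X)}\bigr)$, so necessarily $r \leq r_-$ or $r \geq r_+$; and $r_-$ is precisely the right-hand side of~\eqref{eq::stabChol}.

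The crux is to exclude the spurious branch $r \geq r_+$. For this I would invoke Lemma~\ref{lem:TuLemma}, which gives $r \leq \tfrac{L}{\sigma_d(X)}\norm{G - \tilde{G}} \leq \tfrac{L}{2}\sigma_d(X)$; since $L = 1/\sqrt{2(\sqrt{2}-1)} < \sqrt{2}$, this is strictly below $\tfrac{1}{\sqrt{2}}\sigma_d(X) \leq r_+$, forcing $r \leq r_-$, which is the claimed bound. I expect this branch-selection step --- together with the (easily missed) point that the aligned difference $\Delta$ lands in $\mathrm{H}_X$, so that the sharp constant $\sqrt{2}\,\sigma_d(X)$ of Proposition~\ref{lem::operatorInverse} is actually available --- to be the only parts needing care; the expansion, the operator-norm estimate, and the quadratic-formula algebra are all routine.
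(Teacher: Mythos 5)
Your proof is correct and follows essentially the same route as the paper's: align representatives to get $\Delta \in \mathrm{H}_X$, expand $\tilde G - G = \mathcal{L}_X(\Delta) + \Delta^T\Delta$, lower-bound $\norm{\mathcal{L}_X(\Delta)}$ via Proposition~\ref{lem::operatorInverse} to obtain the quadratic inequality in $\norm{\Delta}$, and use Lemma~\ref{lem:TuLemma} to discard the spurious large root. The only cosmetic difference is that you apply the singular-value lower bound to $\mathcal{L}_X(\Delta)$ directly and then the triangle inequality, whereas the paper phrases the same step through the pseudo-inverse $\mathcal{L}_X^\dagger$; the resulting inequality and branch-selection argument are identical.
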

Crucially, notice that for small $\norm{G - \tilde{G}}$, we have
$1 - \sqrt{1 - \frac{2\norm{G - \tilde{G}}}{\sigma_{d}^{2}(X)}} 
\approx \frac{1}{\sigma_{d}^{2}(X)}\norm{G - \tilde{G}}$.
Hence, the right-hand side of~\eqref{eq::stabChol} behaves like $\frac{1}{\sqrt{2}\sigma_{d}(X)}\norm{G - \tilde{G}}$, which by the discussion above cannot be improved.

\begin{proof}
Let $U\Sigma V^{T}$ be the SVD of $X\tilde{X}^{T}$. 
The orthogonal matrix $Q = UV^{T}$ optimally aligns $X$ and $\tilde X$, in the sense that $\rho([X], [\tilde X]) = \norm{X - Q\tilde X}$. Then,
\begin{align*}
	X\tilde{X}^{T}Q^{T} &= U\Sigma U^{T} = Q\tilde{X}X^{T}.
\end{align*}
Since the theorem statement depends on $X$ and $\tilde X$ only through $[X]$ and $[\tilde X]$, without loss of generality, suppose that $X$ and $\tilde{X}$ are already rotationally aligned, that is, $Q = I$. Then, $X\tilde{X}^T$ is symmetric, positive semidefinite. Define $\Delta X  = \tilde{X} - X$: notice that $\Delta X {X}^{T}$ is also symmetric, and
\begin{align*}
\tilde{X}^{T}\tilde{X} &= (X + \Delta X)^{T}(X + \Delta X) = X^{T}X + (\Delta X^{T}X + X^{T} \Delta X) + \Delta X^{T}\Delta X.
\end{align*}
Rearranging, we get:
\begin{align*}
\tilde{X}^{T}\tilde{X} - X^{T}X - \Delta X^{T}\Delta X &=  \mathcal{L}_{X}(\Delta X),
\end{align*}
where $\mathcal{L}_{X}$ is the operator defined in~\eqref{eq::operator}.
Since $\Delta X$ is in the subspace $\mathrm{H}_X$ defined in~\eqref{eq::horizontal}, it is also orthogonal to the null space $\mathrm{V}_X$ of $\mathcal{L}_{X}$.
As a result, we may write
 \begin{align*}
 \Delta X = \mathcal{L}_{X}^{\dagger}\left( \tilde X^{T} \tilde X - X^{T}X - \Delta X^{T}\Delta X \right),
 \end{align*}
where $\mathcal{L}_{X}^{\dagger}$ is the Moore--Penrose pseudo-inverse of $\mathcal{L}_{X}$.
Proposition~\ref{lem::operatorInverse} then implies:
 \begin{align*}
\norm{\Delta X} \leq \frac{1}{\sqrt{2}\cdot\sigma_{d}(X)}\left[ \norm{\tilde{X}^{T}\tilde{X}  - X^{T}X} + \norm{\Delta X}^{2}\right].
\end{align*}
Reorganizing, we get the following inequality:
 \begin{equation}\label{eq::feasible}
0 \leq  \norm{\Delta X}^{2} - \sqrt{2}\cdot\sigma_{d}(X)  \norm{\Delta X} + \norm{\tilde{X}^{T}\tilde{X} - X^{T}X}.
 \end{equation}
The right-hand side of this inequality is a 
quadratic in $\norm{\Delta X}$. 
Under our assumptions, the two roots of this quadratic, $\xi_{-}$ and $\xi_{+}$, are real and nonnegative: 
 \begin{align*} 
 \xi_{\pm} = \frac{\sqrt{2}\cdot\sigma_{d}(X)}{2} \pm \frac{\sqrt{ 2\cdot\sigma_{d}(X)^{2} - 4\cdot\norm{\tilde{X}^{T}\tilde{X} - X^{T}X}}}{2}.
 \end{align*}
 Since $\norm{\Delta X}$ satisfies \eqref{eq::feasible}, it lies outside the open interval defined by $(\xi_{-},\xi_{+})$. This means there are two possibilities: either $\norm{\Delta X} \in [0, \xi_-]$, or $\norm{\Delta X} \geq \xi_+$. We aim to exclude the latter. To do so, notice that Lemma~\ref{lem:TuLemma} together with our proximity assumption on the Gram matrices implies:
 \begin{align*}
  \norm{\Delta X} = \rho([X], [\tilde X]) & \leq \frac{1}{\sigma_{d}(X)}\frac{1}{\sqrt{ 	2(\sqrt{2} - 1)}} \frac{\sigma_{d}^{2}(X)}{2} < \frac{\sqrt{2}\cdot\sigma_{d}(X)}{2} \leq \xi_{+}.
 \end{align*}
 This allows to conclude that $\rho([X], [\tilde X]) = \norm{\Delta X} \leq \xi_-$. Upon factoring out $\frac{\sigma_{d}(X)}{\sqrt{2}}$ in the expression for $\xi_-$, this completes the proof.
\end{proof}

\subsection{Upper bounds on cloud estimation error}

Theorem~\ref{thm::cholesky} quantifies how a good estimator for the Gram matrix of $[X]$ can be turned into a good estimator for $[X]$ itself. In this part, we first show quantitatively that, with high probability, we can indeed have a good estimator for the Gram matrix. Afterwards, we connect this result with the above theorem to produce a bound on the estimation error of $[X]$.

The first result relies on standard concentration bounds for quadratic forms of Gaussian random variables.
(We remark that it is possible to relax the assumptions to require subgaussian noise rather than Gaussian noise.) We assume $\sigma$ is known, and we use the notation $\normop{X} = \sigma_1(X)$ for the operator norm.
The projection to the set of positive semidefinite matrices of rank at most $d$ is necessary to apply Theorem~\ref{thm::cholesky}, and causes no difficulties in practice.
\begin{theorem}\label{thm::stabilityEst}
Let $Y_{1}, \ldots , Y_{N}$ be i.i.d.\ observations drawn from model~\eqref{eq::description}
and let $\hat{G}_N$ be the Gram estimator as defined in~\eqref{eq:hatGN}.
Since the true Gram matrix $G = X^T X$ is positive semidefinite with rank at most $d$, project $\hat G_N$ to that set; in the notation of Algorithm~\ref{alg:estimation}:
\begin{align*}
	\tilde{G}_{N} = \sum_{i = 1}^{d} \alpha_i^2 v_i^{} v_i^T \in \underset{H\succeq 0 : \mathrm{rank}(H) \leq d}{\mathrm{argmin}} \normop{ \hat{G}_N - H}.
\end{align*}
Then, for any $\delta \in (0, 1)$, with probability at least $1 - \delta$:
\begin{align*}
\norm{\tilde{G}_{N} - G } 
\leq 8\sqrt{2d}\left[  \sqrt{\left( \frac{2\normop{X}^{2}\sigma^{2}+ d\sigma^{4}}{N}\right) k\log\left(\frac{10}{\delta}\right)   } 
+ \frac{\sigma^{2}}{N}k\log\left(\frac{10}{\delta}\right)  \right].
\end{align*}
\end{theorem}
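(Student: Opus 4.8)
The plan is to control $\norm{\tilde G_N - G}$ by first passing to the operator norm, which is where the projection step buys us something, and then translating back to the Frobenius norm at the cost of a $\sqrt{d}$ factor (since $G$, $\hat G_N$ after projection, and $\tilde G_N$ all have rank at most $d$, so $\norm{\tilde G_N - G} \le \sqrt{2d}\,\normop{\tilde G_N - G}$). By the defining optimality of $\tilde G_N$ as the best rank-$\le d$ PSD approximation of $\hat G_N$ in operator norm, and using that $G$ itself is PSD of rank $\le d$, a triangle-inequality argument gives $\normop{\tilde G_N - G} \le \normop{\tilde G_N - \hat G_N} + \normop{\hat G_N - G} \le 2\normop{\hat G_N - G}$. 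So everything reduces to bounding $\normop{\hat G_N - G}$ with high probability.

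Next I would expand $\hat G_N - G$ using~\eqref{eq::description}. Writing $\hat G_N - G = \frac{1}{N}\sum_{i=1}^N Z_i$, where
\begin{align*}
	Z_i = \sigma\bigl(X^T Q_i^T E_i^{} + E_i^T Q_i^{} X\bigr) + \sigma^2\bigl(E_i^T E_i^{} - d I_k\bigr),
\end{align*}
each $Z_i$ is a centered random symmetric matrix; the cross term is linear in the Gaussian $E_i$ and the quadratic term is a centered Gaussian chaos of order two. I would bound the operator norm of each piece separately. For the quadratic term, the key tool is a concentration inequality for $\normop{\frac{1}{N}\sum_i (E_i^T E_i - d I_k)}$ — this is exactly the deviation of a sample Wishart-type matrix from its mean, and standard results (e.g.\ via the matrix Bernstein inequality after truncation, or via $\varepsilon$-net / Hanson--Wright arguments on a $k$-dimensional sphere) give a bound of the form $\sigma^2\bigl(\sqrt{\frac{dk\log(1/\delta)}{N}} + \frac{k\log(1/\delta)}{N}\bigr)$ up to constants, reflecting the two regimes (Gaussian tail / Poissonian tail) typical of quadratic forms. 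For the cross term, $\frac{1}{N}\sum_i \sigma(X^T Q_i^T E_i + E_i^T Q_i X)$ is, conditionally on the $Q_i$'s, a Gaussian matrix; its operator norm concentrates around $\sigma\normop{X}\sqrt{d/N}$ scale with a subgaussian tail, contributing a term like $\sigma\normop{X}\sqrt{\frac{dk\log(1/\delta)}{N}}$. Adding these, taking a union bound over the (two or three) events with $\delta$ split among them, reorganizing under a single square root, and absorbing constants into the stated $8\sqrt{2d}$ and the $\log(10/\delta)$ yields the claimed inequality.

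The main obstacle is the quadratic term: getting a \emph{clean} high-probability operator-norm bound on $\frac{1}{N}\sum_i(E_i^T E_i - dI_k)$ with the correct two-regime shape $\sqrt{dk\log(1/\delta)/N} + k\log(1/\delta)/N$ and with explicit, reasonable constants. The naive matrix Bernstein inequality does not apply directly because $E_i^T E_i$ has unbounded operator norm, so one must either truncate the Gaussians (and control the truncation error), or combine an $\varepsilon$-net discretization over the sphere in $\R^k$ with a Bernstein-type scalar bound for $\sum_i (\langle u, E_i^T E_i u\rangle - d)$ — a sum of i.i.d.\ centered $\sigma$-scaled $\chi^2_d$-like variables — and then pay the $\log|\mathcal{N}_\varepsilon| \asymp k$ factor from the net. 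The cross term is comparatively routine: conditionally Gaussian, handled by a standard operator-norm tail bound for Gaussian random matrices, with the $\normop{X}^2$ appearing through the covariance. A minor bookkeeping point is to make sure the $\sqrt{d}$ from the Frobenius-to-operator conversion, the factor $2$ from the projection argument, and the constants from the two concentration bounds multiply out to something no larger than the stated $8\sqrt{2d}$, which should be comfortable given the slack in the $\log(10/\delta)$ (versus $\log(1/\delta)$ or $\log(k/\delta)$).
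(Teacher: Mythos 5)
Your overall framework---projecting onto rank-$\leq d$ PSD matrices, passing to operator norm via the rank-$\leq 2d$ argument to pay only $\sqrt{2d}$, the triangle-inequality step giving $\normop{\tilde G_N - G} \leq 2\normop{\hat G_N - G}$, and an $\varepsilon$-net over $\mathbb{S}^{k-1}$ costing $\log(2\cdot 5^k)\asymp k$---matches the paper's argument. Where you genuinely diverge is in how you control $\normop{\hat G_N - G}$: you split $\hat G_N - G$ into a cross term (linear in $E_i$) and a centered Wishart term, bound each separately, and add. The paper instead works with the scalar quadratic form $u^T(\hat G_N - G)u$ for each fixed $u$ directly: since $\|(Q_iX + \sigma E_i)u\|^2 \stackrel{d}{=} \|(X + \sigma E_i)u\|^2$, this is a sum of i.i.d.\ non-central $\chi^2_d$ variables scaled by $\sigma^2$, and a single concentration bound for non-central $\chi^2$ (Birg\'e's Lemma~8.1) handles the linear and quadratic contributions \emph{simultaneously}, producing the two-regime tail $\sqrt{(2\|Xu\|^2 + d\sigma^2)\sigma^2 t/N} + \sigma^2 t/N$ in one stroke. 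This sidesteps the obstacle you flag: there is no need for matrix Bernstein on unbounded Wishart increments, no truncation step, and only one union bound over the net instead of two separate failure events that must then be recombined under a single square root. Your alternative is not wrong---the ``scalar Bernstein over an $\varepsilon$-net'' option you sketch for the Wishart piece is in fact the same maneuver, restricted to one summand---but it forces you to re-derive the coupled form of the final bound by hand. One small quantitative slip: you estimate the cross-term contribution as $\sigma\normop{X}\sqrt{dk\log(1/\delta)/N}$, but for fixed $u$ the cross term is $2\sigma\langle Xu, H_N u\rangle$ with $H_N u \sim \mathcal{N}(0, N^{-1}I_d)$, giving variance $\sigma^2\|Xu\|^2/N$ with no $d$; the extra $\sqrt{d}$ is extraneous, and the non-central $\chi^2$ route gets the correct scaling $\sqrt{\normop{X}^2\sigma^2\, k\log(10/\delta)/N}$ (before the outer $\sqrt{2d}$) automatically.
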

\begin{proof}
For all $u \in \mathbb{S}^{k-1}$ (the unit sphere in $\mathbb{R}^k$), consider:
\begin{align}
u^{T} (\hat{G}_{N} - G)u
&= u^{T} \left(\frac{1}{N}\sum_{i=1}^{N} Y_{i}^{T}Y_{i}^{} - d \sigma^{2} I_{k} - X^{T}X \right)u\nonumber\\
&= \frac{1}{N} \sum_{i=1}^{N} \left[ \|(Q_iX + \sigma E_{i})u\|^{2} - (\|Xu\|^{2} + d\sigma^{2})\right]. \label{sumOperator}
\end{align}
Notice that $\|(Q_iX + \sigma E_{i})u\|^{2} = \|(X + \sigma Q_i^T E_{i})u\|^{2}$ is equal in distribution to $\|(X + \sigma E_{i})u\|^{2}$ since entries of $E_i$ are standard Gaussian. Thus, the first part of~\eqref{sumOperator} is distributed like a sum of squared norms of i.i.d.\ non-centered Gaussian vectors.
Reorganizing standard concentration bounds for noncentral $\chi^{2}$ (see for instance~\citep[ Lemma~8.1]{Birge01Lepski}) gives, for all $u$ in $\mathbb{S}^{k-1}$,
\begin{align}
\mathbb{P}\left[u^{T} (\hat{G}_{N} - G)u \geq 2\sqrt{\left(2\|Xu\|^{2} + d\sigma^{2}\right)\frac{t}{N}\sigma^{2}} + 2 \frac{t}{N}\sigma^{2}\right] & \leq e^{-t}, \textrm{ and }\\
\mathbb{P}\left[u^{T} (\hat{G}_{N} - G)u \leq -2\sqrt{\left(2\|Xu\|^{2} + d\sigma^{2}\right)\frac{t}{N}\sigma^{2}}\right] & \leq e^{-t}.
\end{align}
We then cover the sphere $\mathbb{S}^{k-1}$ with an $\varepsilon$-net. A union bound over all the elements of the net  
(see for instance~\citep[ Section 5.2.2]{Ver10}) yields
$$\mathbb{P}\left[\normop{\hat{G}_N - G } \leq 4\sqrt{(2\normop{X}^{2} + d\sigma^{2})\frac{t}{N}\sigma^{2}} + 4\frac{t}{N}\sigma^{2}\right] \geq 1 - 2 \cdot 5^{k} \cdot e^{-t}.$$
Since $G$ is positive semidefinite and has rank $d$, the projection $\tilde{G}_{N}$ satisfies:
\begin{align*}
\normop{\hat{G}_{N} - \tilde{G}_{N}} &\leq \normop{\hat{G}_{N} - G}.
\end{align*}
By the triangle inequality, we obtain:
\begin{align*}
\normop{G - \tilde{G}_{N}} &\leq \normop{G - \hat{G}_{N}} + \normop{\hat{G}_{N} - \tilde{G}_{N}} \leq 2 \normop{G - \hat{G}_{N}}.
\end{align*}
Since $\mathrm{rank}(G), \mathrm{rank}(\tilde{G}_{N}) \leq d$, it follows that $\mathrm{rank}(G - \tilde{G}_{N}) \leq 2d$ and we get:
\begin{align*}
\norm{G - \tilde{G}_{N}} \leq 2\sqrt{2d} \normop{G - \hat{G}_{N}}. 
\end{align*}
As a result,
$$\mathbb{P} \left[\norm{\tilde{G}_{N} - G } \leq 8\sqrt{2d(2\normop{X}^{2} + d\sigma^{2})\frac{t}{N}\sigma^{2}} + 8\sqrt{2d}\frac{t}{N}\sigma^{2} \right] \geq 1 - 2 \cdot 5^{k} \cdot e^{-t}.$$
Taking $t =  \log\left(\frac{2 \cdot 5^{k}}{\delta}\right)$  implies the final result:
$$
\norm{\tilde{G}_{N} - G } \leq 8\sqrt{2d}\left[  \sqrt{\left( 2\normop{X}^{2} + d\sigma^{2} \right) \frac{\sigma^{2}}{N}k\log\left(\frac{10}{\delta}\right)  } + \frac{\sigma^{2}}{N}k\log\left(\frac{10}{\delta}\right)  \right]
$$
with probability at least $1- \delta$.
\end{proof}

Combining Theorem~\ref{thm::stabilityEst} with a stability result on the thin Cholesky decomposition gives the main result on the stability of the proposed estimator, as measured with the distance $\rho$~\eqref{eq::distance}.
\begin{corollary}
Let $Y_{1}, \ldots , Y_{N}$ be $N$ i.i.d.\ samples drawn according to \eqref{eq::description}.
Let $[\tilde{X}]$ be the estimator  returned by Algorithm \ref{alg:estimation}.
Then, for large $N$, with probability at least $1 - \delta$,
\begin{align*}
\rho([X],[\tilde{X}]) \leq \frac{8L\sqrt{2d}}{\sigma_{d}(X)} \left[  \sqrt{\left( 2\normop{X}^{2} + d\sigma^{2} \right) \frac{\sigma^{2}}{N}k\log\left(\frac{10}{\delta}\right)  } + \frac{\sigma^{2}}{N}k\log\left(\frac{10}{\delta}\right)  \right],
\end{align*}
where $L$ can be taken as $1/\sqrt{2(\sqrt{2} - 1)}$ or, for large enough $N$, arbitrarily close to $1/\sqrt{2}$.
\end{corollary}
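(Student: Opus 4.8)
\emph{Proof proposal.} The plan is to chain two ingredients that are already in hand: the high-probability control of the Gram-matrix error from Theorem~\ref{thm::stabilityEst}, and the deterministic sensitivity estimates for Gram inversion --- Lemma~\ref{lem:TuLemma} for the constant $L = 1/\sqrt{2(\sqrt2-1)}$, and Theorem~\ref{thm::cholesky} for the asymptotically improved constant. I would write $B_N$ for the right-hand side appearing in Theorem~\ref{thm::stabilityEst} and let $\mathcal{E}$ be the event $\{\norm{\tilde G_N - G} \le B_N\}$, of probability at least $1 - \delta$; here $\tilde G_N = \sum_{i=1}^d \alpha_i^2 v_i v_i^T$ is the rank-$d$ positive semidefinite projection formed inside Algorithm~\ref{alg:estimation}, so that $\tilde G_N = \tilde X_N^T \tilde X_N$ with $\tilde X_N$ the output of the algorithm.

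The first thing to settle is that, for $N$ large, the corollary even makes sense on $\mathcal{E}$. Since $B_N \to 0$ and, by the operator-norm concentration bound obtained in the course of proving Theorem~\ref{thm::stabilityEst}, $\normop{\hat G_N - G} \to 0$ on $\mathcal{E}$, Weyl's inequality gives, for $N$ past a threshold depending on $\sigma_d(X), \normop{X}, \sigma, d, k, \delta$,
\begin{align*}
	\lambda_d(\hat M_N) - \lambda_{d+1}(\hat M_N) = \lambda_d(\hat G_N) - \lambda_{d+1}(\hat G_N) \ge \sigma_d^2(X) - 2\normop{\hat G_N - G} > 0,
\end{align*}
so Algorithm~\ref{alg:estimation} returns a well-defined equivalence class $[\tilde X_N]$; and $\sigma_d(\tilde G_N) \ge \sigma_d(G) - \normop{\tilde G_N - G} \ge \sigma_d^2(X) - B_N > 0$, so $\tilde G_N$ has rank exactly $d$, hence $[\tilde X_N] \in \mathcal{M}$ and $\rho([X], [\tilde X_N])$ is well defined.

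Next, on $\mathcal{E}$ I would apply Lemma~\ref{lem:TuLemma} with $[\tilde X] = [\tilde X_N]$, obtaining
\begin{align*}
	\rho([X], [\tilde X_N]) \le \frac{L}{\sigma_d(X)}\norm{\tilde X_N^T \tilde X_N - X^T X} = \frac{L}{\sigma_d(X)}\norm{\tilde G_N - G} \le \frac{L}{\sigma_d(X)} B_N, \qquad L = \frac{1}{\sqrt{2(\sqrt2 - 1)}},
\end{align*}
and substituting the explicit form of $B_N$ from Theorem~\ref{thm::stabilityEst} gives the stated bound with this first value of $L$. For the sharper constant, restrict further to $N$ large enough that $B_N \le \sigma_d^2(X)/2$, so the proximity hypothesis of Theorem~\ref{thm::cholesky} holds on $\mathcal{E}$; writing $x_N = 2\norm{G - \tilde G_N}/\sigma_d^2(X) \in [0,1]$ and using the identity $1 - \sqrt{1-x} = x/(1+\sqrt{1-x})$, Theorem~\ref{thm::cholesky} yields
\begin{align*}
	\rho([X], [\tilde X_N]) \le \frac{\sigma_d(X)}{\sqrt2}\cdot\frac{x_N}{1+\sqrt{1-x_N}} = \frac{\sqrt2\,\norm{G-\tilde G_N}}{\sigma_d(X)\,(1+\sqrt{1-x_N})} \le \frac{\sqrt2\,B_N}{\sigma_d(X)\bigl(1+\sqrt{1 - 2B_N/\sigma_d^2(X)}\bigr)}.
\end{align*}
As $N \to \infty$, $B_N \to 0$ and the prefactor tends to $\sqrt2/(2\sigma_d(X)) = 1/(\sqrt2\,\sigma_d(X))$, so for any prescribed tolerance the effective constant $L$ may be taken within that tolerance of $1/\sqrt2$ once $N$ is large enough; substituting $B_N$ again gives the second form.

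The only genuinely delicate point is the ``for large $N$'' bookkeeping: one must verify that the single event $\mathcal{E}$ supplied by Theorem~\ref{thm::stabilityEst} simultaneously (i) places $\tilde X_N$ in the full-rank stratum $\mathcal{M}$ and supplies the eigengap of $\hat M_N$, so that $\rho$, Lemma~\ref{lem:TuLemma} and Theorem~\ref{thm::cholesky} all apply; (ii) satisfies $\norm{G - \tilde G_N} \le \sigma_d^2(X)/2$; and (iii) makes $x_N$ small enough that the prefactor above is within the desired tolerance of $1/\sqrt2$. All three are secured simply by inverting the explicit expression for $B_N$, which is $O\bigl(\sqrt{k\log(1/\delta)/N}\bigr)$, so no probabilistic input beyond Theorem~\ref{thm::stabilityEst} is required; everything else is a direct composition of the two previously established results.
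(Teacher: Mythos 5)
Your proposal is correct and follows essentially the same route as the paper's own (very terse) proof: chain the high-probability Gram-matrix bound from Theorem~\ref{thm::stabilityEst} with Lemma~\ref{lem:TuLemma} to obtain the bound with $L = 1/\sqrt{2(\sqrt{2}-1)}$, and with Theorem~\ref{thm::cholesky} (whose hypothesis $\norm{\tilde G_N - G} \le \sigma_d^2(X)/2$ holds for $N$ large) together with the expansion $1-\sqrt{1-x} = x/(1+\sqrt{1-x})$ to obtain the constant arbitrarily close to $1/\sqrt{2}$. Your extra bookkeeping --- the eigengap via Weyl, the full-rank check on $\tilde G_N$, and the explicit dependence of the threshold $N$ on the problem parameters --- makes explicit what the paper tucks behind the phrase ``for large $N$,'' but it is the same argument.
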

\begin{proof}
The first claim follows from Theorem~\ref{thm::stabilityEst} and Lemma~\ref{lem:TuLemma}.
For $N$ large enough,
Theorem~\ref{thm::stabilityEst} shows that the assumption of Theorem~\ref{thm::cholesky}, namely, $\norm{\tilde{G}_N - G} \leq \frac{\sigma_{d}^{2}(X)}{2}$, is satisfied with high probability. In that scenario, combining the two theorems
yields the second result.
\end{proof}

\section{Statistical optimality of the estimator} \label{sec:statisticaloptimality}

Estimating the equivalence class of $X$ from samples of the form \eqref{eq::description} is a particular instance of an estimation problem under a group action.
Bandeira et al.~\citep{Bandeira2018EstimationGroupAction} showed that the statistical complexity
of estimation problems under a group action is connected to the  structure of the group acting on the parameter. 
In particular, it is shown that the minimum number of samples required to reliably estimate the parameter grows as $O(\sigma^{2p})$ where $p$ 
is the smallest degree of invariant polynomials required to fully characterize the equivalence classes.
We have shown in Section~\ref{invariant} that, in our case, $p = 2$.
In this section, we build on those results to show minimax lower bounds on the estimation of the equivalence class $[X]$. 
We also provide matching upper bounds, hence showing the statistical optimality of our estimator in the low SNR regimes.

To fix scale and to avoid pathological cases, throughout this section we assume that $X$ belongs to the space
\begin{equation}\label{def::searchspace}
\mathcal{X} = \{X \in \mathbb{R}^{d \times k}: \norm{X}^2 \leq d \textrm{ and } \sigma_{d}(X) \geq \eta \},
\end{equation}
where $\sigma_{d}(X)$ is the $d$th (that is, smallest) singular value of $X$, and $\eta > 0$ is fixed.

\subsection{Lower bound on the estimation error for high noise regimes}

In the presence of large noise (that is, for large $\sigma$), the MSE of \emph{any} estimator of the equivalence class of $X$ scales with $\sigma$ as $\sigma^4$: we make this precise in the following theorem.
\begin{theorem}\label{thm::minimax}
Suppose we observe $N$ samples $Y_{1},\ldots,Y_{N}$ drawn independently according to \eqref{eq::description}. Then the so-called minimax risk for estimating the orbit of $X$ satisfies
$$
\inf_{\hat{X}}\sup_{X \in \mathcal{X}}\mathbb{E}[\rho^{2}(X,\hat{X})] \asymp \dfrac{\sigma^{4}}{N},
$$
for sufficiently large $\sigma$, where $\rho$ is as in~\eqref{eq::distance} and $\mathcal{X}$ is defined by~\eqref{def::searchspace}, and the infimum is taken over all possible estimators, random or deterministic.
\end{theorem}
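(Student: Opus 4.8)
The statement is a minimax rate $\asymp \sigma^4/N$, so it splits into an upper bound and a matching lower bound. The upper bound I would get for free (up to constants) from the machinery already assembled: plug Theorem~\ref{thm::stabilityEst} into Theorem~\ref{thm::cholesky} (equivalently, invoke the Corollary of the previous section), observe that on the search space $\mathcal{X}$ we have $\normop{X}^2 \le \norm{X}^2 \le d$ and $\sigma_d(X) \ge \eta$, and note that for $\sigma$ large the term $d\sigma^4/N$ dominates $2\normop{X}^2\sigma^2/N$ inside the square root, so $\rho^2([X],[\tilde X]) = O(\sigma^4/N)$ with high probability; a standard truncation argument (the estimator can always be projected onto the bounded set $\mathcal{X}$, so $\rho$ is bounded) converts the high-probability bound into a bound on $\mathbf{E}[\rho^2]$, giving $\inf_{\hat X}\sup_X \mathbf{E}[\rho^2] = O(\sigma^4/N)$.

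**The lower bound is the real content.** Here the plan is a two-point (or finite-hypothesis) Le~Cam / Fano argument, following the template of Bandeira et al.~\citep{Bandeira2018EstimationGroupAction}. I would pick two clouds $X_0, X_1 \in \mathcal{X}$ whose \emph{orbits are far} in $\rho$ — say $\rho([X_0],[X_1]) \ge c\,\eta$ for an absolute constant $c$ — but whose \emph{second-order invariant features agree to first order}, i.e. $X_0^T X_0 = X_1^T X_1$, while the distributions of a single observation $Y = QX + \sigma E$ differ only through higher moments. Concretely, because $Q$ is Haar-random, the law of $Y$ depends on $X$ only through the orbit, and its KL divergence from the law under $X_0$ vs. $X_1$, when the Gram matrices match, is controlled by the mismatch in degree-$\ge 4$ invariants, which scales like $\|X_0^TX_0\cdots\|^2/\sigma^4$-type quantities — the key point being the $1/\sigma^4$ decay of the KL divergence per sample in the large-$\sigma$ regime. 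Tensorizing over $N$ i.i.d.\ samples, $\KLop(P_{X_0}^{\otimes N}\,\|\,P_{X_1}^{\otimes N}) = O(N/\sigma^4)$. Rescaling the separation: one takes $X_0, X_1$ to differ by a perturbation of size $\epsilon$, so that $\rho([X_0],[X_1])^2 \asymp \epsilon^2$ and the per-sample KL is $\asymp \epsilon^4/\sigma^4$ (the quartic scaling is exactly the $p=2$ phenomenon). Choosing $\epsilon$ so that $N\epsilon^4/\sigma^4 \asymp 1$, i.e. $\epsilon^2 \asymp \sigma^2/\sqrt N$ — wait, we want $\epsilon^4 \asymp \sigma^4/N$, so $\epsilon^2 \asymp \sigma^2/\sqrt N$; this does not immediately give $\sigma^4/N$, so one instead works with a construction where the separation and the KL have the scalings that make Le~Cam yield $\mathbf{E}[\rho^2] \gtrsim \epsilon^2 \asymp \sigma^4/N$ — this requires the construction in which the \emph{squared} separation is what appears at order four in the KL, so the right bookkeeping is $\rho^2 \asymp \epsilon^2$ and $\KLop \asymp N\rho^4/\sigma^4$, optimized at $\rho^2 \asymp \sigma^4/N$. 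Making this bookkeeping precise — exhibiting explicit $X_0,X_1$ in $\mathcal{X}$, bounding the KL divergence between the two mixture-over-$\Od$ Gaussian models, and extracting the quartic-in-perturbation behavior — is the heart of the argument and where I would spend the most care.

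**Where I expect the difficulty.** The main obstacle is the KL computation between the two \emph{marginal} observation laws, each of which is a Gaussian mixture $\int_{\Od} \mathcal{N}(QX, \sigma^2 I)\, dQ$ rather than a single Gaussian; one cannot just use the closed-form Gaussian KL. The standard route is to expand $\chi^2$ (or KL) in powers of $1/\sigma$ using Hermite-polynomial / Wick-type expansions, showing the degree-$0$ and degree-$2$ terms cancel exactly when the Gram matrices match (this is where Theorem~\ref{thm::fundamentalThm} and the invariant-theory discussion pay off structurally), leaving a leading term of order $\|X_0^{\otimes 2}\text{-vs-}X_1^{\otimes 2}\|^2/\sigma^4$ that I then bound above and below by an explicit multiple of $\rho([X_0],[X_1])^4$. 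A secondary technical point is ensuring the chosen hypotheses genuinely lie in $\mathcal{X}$ (norm $\le \sqrt d$, $\sigma_d \ge \eta$) with separation independent of $\sigma$, which constrains the construction (e.g.\ perturb within a fixed well-conditioned cloud); and finally, converting a two-point bound into the stated risk lower bound via Le~Cam's two-point method is routine once the KL bound is in hand. I would also remark that the $O(\sigma^2/N)$ low-noise term of the paper's estimator does not contradict the theorem, since here $\sigma$ is taken "sufficiently large."
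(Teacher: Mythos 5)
Your broad plan---bound the KL divergence between the mixture-over-$\Od$ observation laws by $C\rho^2/\sigma^4$ per sample and feed this into an information-theoretic lower bound---is indeed the paper's strategy (Lemmas~\ref{LemmaKLBound} and~\ref{ref::intermediate}). The paper, however, runs a multi-hypothesis Fano argument over a local packing of the Grassmannian $\Gkd$ (via the bijection between orbits $[X]$ with $XX^T = I_d$ and $d$-dimensional subspaces, combined with Szarek's metric-entropy estimate and the Yang--Barron localization in Lemma~\ref{lem::localPacking}), not a two-point Le~Cam test; two-point is workable but the paper's route gives explicit dependence on $d(k-d)$.

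There are two concrete errors in your proposal that would stop it as written. First, you propose picking $X_0, X_1$ with $X_0^TX_0 = X_1^TX_1$ but $\rho([X_0],[X_1]) \ge c\eta$. This is vacuous: Theorems~\ref{thm::fundamentalThm} and~\ref{thm::carac} (the first fundamental theorem and the characterization by the full invariant ring) imply that matching Gram matrices force $[X_0] = [X_1]$, hence $\rho = 0$. There is no room to "agree to first order" in the degree-two invariants while staying separated; the degree-two invariants are \emph{complete}. Second, and relatedly, your bookkeeping has the wrong power of $\rho$. The correct KL bound (Lemma~\ref{LemmaKLBound}) is $\KL{\mathbb{P}_{X_1} \| \mathbb{P}_{X_2}} \le C\sigma^{-4}\rho^2(X_1,X_2)$---\emph{linear} in $\rho^2$, not $\rho^4$. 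The reason is that the leading nonzero $\chi^2$ contribution comes from $\|\Delta_2\|^2 = \|\mathbb{E}_Q[\vect{QX_1}^{\otimes 2} - \vect{QX_2}^{\otimes 2}]\|^2$, and $\Delta_2$ is an averaged \emph{difference of second-order tensors} whose norm is $O(\|X_1\|\cdot\rho)$; squaring gives $\rho^2$, not $\rho^4$, and then the $\sigma^{-4}$ prefactor appears as $\sigma^{-2\cdot 2}/2!$. With the correct linear-in-$\rho^2$ bound, the Le~Cam/Fano calculation closes cleanly: setting $N\rho^2/\sigma^4 \asymp 1$ gives $\rho^2 \asymp \sigma^4/N$, which is the claimed rate; your $\rho^4$ bookkeeping is what produced the contradiction $\rho^2 \asymp \sigma^2/\sqrt N$ that you flagged but did not resolve. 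Once you replace the Gram-matching hypothesis pair with a pair at distance $\rho \asymp \sigma^2/\sqrt N$ in $\mathcal{X}$ and use the linear bound, the argument goes through.
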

The general study of lower bounds for estimation under a group action has been addressed by Abbe et al~\citep{AbbePereiraSinger18EstimationGpActionLB}, using Chapman--Robbins bounds.
Here, we take a different approach, 
 proving minimax rates using two ingredients: a \textit{tight} bound on the Kullback--Leibler (KL) divergence
and a bound on the packing number of a particular metric space.
However, since we are not aware of any result on the packing number of our parameter space for the metric $\rho$, we consider a strict subset of this parameter space for which tight bounds
 on the packing number are known.
Specifically, we start by noticing that the Grassmannian $\Gkd$---the set of $d$ dimensional subspaces of $\R^k$---is in correspondence with a subset of the parameter space.
Building on this observation, we restrict ourselves to the strictly simpler problem where the matrix $X$ satisfies the condition $XX^{T} = I_{d}$. 
In this restricted setting, there is a one-to-one correspondence between an equivalence class and an element of the Grassmannian.
We then use a result on the covering number of the Grassmannian to control its \textit{local} packing number. This result was used by Cai et al.~\citep{CaiMaWu13RatesSparsePCA} in the context of 
optimal rates of estimation  for the principal subspace of a covariance matrix under a sparsity assumption.
Then, we show a tight bound of the KL divergence.
This bound is a particular case of a more general result on the KL divergence of samples observed under the action of a group.
\begin{enumerate}
\item Packing number of the Grassmannian:
we start by defining a metric on $\Gkd$. This metric on the Grassmanian is shown to be equivalent to the distance between equivalence
classes $\rho$.
We then show tight lower and upper bounds on the covering number of $\Gkd$ for the aforementioned metric. This result is due to~\TODO{Szarek} \citep{Szarek98NetsGrassman}. 
We then use a result on the local packing number of the Grassmannian: leveraging the previous result, for any $\alpha \in (0,1)$ and for any $\varepsilon$ small enough, we give a lower bound on the $\alpha\varepsilon$-packing number of a ball of radius $\varepsilon$. This follows the technique proposed by Yang and Barron~\citep{YangBarron99Minimax}.
\item Tight control of the KL divergence: 
we state a lemma giving a bound on the $\mathrm{KL}$ divergence of the distribution  
$\mathbb{P}_{X}$ of samples drawn according to \eqref{eq::description}.
This lemma follows from a result by Bandeira et al.~\citep{Bandeira2018EstimationGroupAction}.
\end{enumerate}
%
We start by giving the result on the covering number of the Grassmannian.
\begin{lemma}\label{MetricEntropy}[Cai et al.~\citep{CaiMaWu13RatesSparsePCA}, Lemma 1]
Define the metric on $\Gkd$ by $\tilde{\rho}(V,U) = \norm{V^{T}V - U^{T}U}$.
Then, for any $\varepsilon \in \left(0,\sqrt{2\min(d,k - d)}\right]$, we have
$$
\left(\dfrac{c_{0}}{\varepsilon}\right)^{d(k - d)} \leq \mathfrak{N}(\Gkd,\varepsilon) \leq \left(\dfrac{c_{1}}{\varepsilon}\right)^{d(k - d)},
$$
where $\mathfrak{N}(\Gkd,\varepsilon)$ is the $\varepsilon$-covering number of $\Gkd$ with respect to the metric $\tilde{\rho}$ and $c_{0}$, $c_{1}$ are absolute constants.
\end{lemma}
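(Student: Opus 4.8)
Lemma~\ref{MetricEntropy} is quoted from~\citep{CaiMaWu13RatesSparsePCA}, which in turn rests on Szarek's metric entropy estimates for the Grassmannian~\citep{Szarek98NetsGrassman}, so the shortest route is simply to invoke those references. If one wants to prove it directly, the plan is to run a textbook volumetric (metric-entropy) argument on $\Gkd$ viewed as a compact homogeneous Riemannian manifold of dimension $d(k-d)$; the only genuine subtlety is that the multiplicative constants must come out independent of $d$ and $k$.

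First I would fix the geometry. Represent a point of $\Gkd$ by the orthogonal projector $P$ of rank $d$ onto the corresponding subspace of $\R^k$, so that $\tilde{\rho}(V,U) = \norm{P_V - P_U}$ is exactly the chordal (projection-Frobenius) distance; in terms of the principal angles $\Theta = (\theta_1,\dots,\theta_{\min(d,k-d)})$ between the two subspaces, $\tilde{\rho} = \sqrt{2}\,\norm{\sin\Theta}$, whose maximum value is $\sqrt{2\min(d,k-d)}$ — which is precisely the range of $\varepsilon$ allowed in the statement. The group $\mathcal{O}(k)$ acts transitively on $\Gkd$ (by conjugation of projectors) and by $\tilde\rho$-isometries, so $\Gkd$ carries a unique $\mathcal{O}(k)$-invariant probability measure $\mu$, and the mass $\beta(r)$ of a metric ball $\{P' : \tilde\rho(P,P') \le r\}$ does not depend on its center.

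The crux of the argument — and where I expect the main difficulty to lie — is to show that there are absolute constants $0 < a \le b$ with
\begin{equation*}
	a\,\varepsilon^{d(k-d)} \;\le\; \beta(\varepsilon) \;\le\; b\,\varepsilon^{d(k-d)}, \qquad \varepsilon \in \bigl(0,\sqrt{2\min(d,k-d)}\,\bigr].
\end{equation*}
The exponent $d(k-d) = \dim\Gkd$ is forced; the content is in the constants. Locally this is routine: near a point the exponential map is a bi-Lipschitz chart onto a Euclidean ball in the tangent space $\cong\R^{d(k-d)}$, the chordal distance $\tilde\rho$ and the geodesic distance are comparable up to a universal factor (compare $\sqrt2\,\norm{\sin\Theta}$ with $\norm{\Theta}$), and a Bishop--Gromov-type volume comparison controls $\beta$ once one knows that $\Gkd$ has sectional curvature pinched between $0$ and an absolute constant in the natural normalization. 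The hard part is that these comparisons must be carried out \emph{uniformly in $d$, $k$ and all the way up to the diameter scale}, not merely for infinitesimal $\varepsilon$, so that $a,b$ (hence $c_0,c_1$) are genuinely dimension-free; this uniformity is exactly what Szarek's analysis provides.

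Granting the two-sided bound on $\beta$, the covering-number estimates are immediate. For the upper bound, take a maximal $\varepsilon$-separated subset $\{P_1,\dots,P_M\}\subset\Gkd$: maximality forces it to be an $\varepsilon$-cover, while the balls $B(P_i,\varepsilon/2)$ are pairwise disjoint, so $M\,a(\varepsilon/2)^{d(k-d)} \le \mu(\Gkd) = 1$ and hence $\mathfrak{N}(\Gkd,\varepsilon) \le M \le (c_1/\varepsilon)^{d(k-d)}$ with $c_1$ depending only on $a$. For the lower bound, if $\{W_1,\dots,W_L\}$ is any $\varepsilon$-cover then $1 = \mu(\Gkd) \le \sum_j \mu(B(W_j,\varepsilon)) \le L\,b\,\varepsilon^{d(k-d)}$, giving $\mathfrak{N}(\Gkd,\varepsilon) \ge (c_0/\varepsilon)^{d(k-d)}$ with $c_0$ depending only on $b$. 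This yields both inequalities with absolute constants, as claimed.
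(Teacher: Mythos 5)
The paper offers no proof of this lemma: it is quoted verbatim from Cai, Ma and Wu, who themselves cite Szarek's metric-entropy estimates for the Grassmannian. Your opening observation that the shortest route is simply to invoke those references therefore matches the paper exactly.

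Your volumetric sketch is a correct high-level outline of what the cited references actually do, and you correctly identify that the exponent $d(k-d)$ is the easy part while the content is in the dimension-free constants $a,b$ for the Haar measure of a chordal ball. The packing/union-bound bookkeeping at the end is standard and right (note you have the covering-number inequalities with the roles of $a,b$ swapped relative to the usual convention, but the logic is sound). Where the sketch stops short is precisely the two-sided ball-volume estimate
$a\,\varepsilon^{d(k-d)} \le \beta(\varepsilon) \le b\,\varepsilon^{d(k-d)}$
with absolute constants valid all the way to the diameter scale; you flag this as the hard part and defer it to Szarek, which is honest but means the argument is a plan rather than a proof. (Szarek's actual route is more Lie-theoretic than the Bishop--Gromov curvature comparison you gesture at, working with nets on $\mathcal{O}(k)$ and pushing them down, but that is a stylistic rather than substantive difference.) As an account of what the citation buys the paper, your summary is accurate; as a standalone proof it would need the ball-volume lemma filled in.
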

%
Building on the previous result, we can state a result on the \textit{local} packing of $G(k,d)$.
\begin{lemma}\label{lem::localPacking}
Let $B(V,\varepsilon) = \{U \in \Gkd: \tilde{\rho}(U,V) \leq \varepsilon \}$,
 $\alpha \in (0,1)$ and $\varepsilon \in (0,\varepsilon_{0}]$.
Then, there exists $V^{*} \in \Gkd$ such that: 
$$
\mathfrak{M}(B(V^{*},\varepsilon),\alpha\varepsilon) \geq \left(\dfrac{c_{0}}{\alpha c_{1}}\right)^{d(k - d)},
$$
where $\mathfrak{M}(\Gkd,\varepsilon)$ is the $\varepsilon$-packing number of $\Gkd$ with respect to the metric $\tilde{\rho}$
and $c_{0}$, $c_{1}$ are the absolute constants in Lemma~\ref{MetricEntropy}.
\end{lemma}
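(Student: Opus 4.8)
The plan is to follow the standard local‐packing construction of Yang and Barron, converting the global covering estimates of Lemma~\ref{MetricEntropy} into a local packing estimate by a pigeonhole argument over an $\varepsilon$-net, with $\varepsilon_0 = \sqrt{2\min(d,k-d)}$.

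First I would record the elementary fact that in any metric space a maximal $\varepsilon$-separated set is automatically an $\varepsilon$-net, so $\mathfrak{N}(\mathcal{T},\varepsilon) \leq \mathfrak{M}(\mathcal{T},\varepsilon)$. Since $\alpha \in (0,1)$ and $\varepsilon \in (0,\varepsilon_0]$, the scale $\alpha\varepsilon$ also lies in $(0,\varepsilon_0]$, so the lower bound of Lemma~\ref{MetricEntropy} applies at that scale, giving a family $\{U_1,\dots,U_M\} \subset \Gkd$ that is $\alpha\varepsilon$-separated for $\tilde\rho$ with $M = \mathfrak{M}(\Gkd,\alpha\varepsilon) \geq \mathfrak{N}(\Gkd,\alpha\varepsilon) \geq (c_0/(\alpha\varepsilon))^{d(k-d)}$.

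Next I would fix a $\tilde\rho$-ball cover $\{B(V_1,\varepsilon),\dots,B(V_P,\varepsilon)\}$ of $\Gkd$ realizing the covering number, so that $P = \mathfrak{N}(\Gkd,\varepsilon) \leq (c_1/\varepsilon)^{d(k-d)}$ by the upper bound of Lemma~\ref{MetricEntropy}. Each $U_i$ lies in at least one ball of the cover, so by the pigeonhole principle there is an index $j$ for which $B(V_j,\varepsilon)$ contains at least $M/P \geq (c_0/(\alpha\varepsilon))^{d(k-d)}\big/(c_1/\varepsilon)^{d(k-d)} = (c_0/(\alpha c_1))^{d(k-d)}$ of the points $U_i$. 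Taking $V^* = V_j$, the $U_i$'s falling in $B(V^*,\varepsilon)$ remain pairwise $\alpha\varepsilon$-separated and all lie in $B(V^*,\varepsilon)$, so they constitute an $\alpha\varepsilon$-packing of $B(V^*,\varepsilon)$; hence $\mathfrak{M}(B(V^*,\varepsilon),\alpha\varepsilon) \geq (c_0/(\alpha c_1))^{d(k-d)}$, as claimed.

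I do not expect a genuine obstacle here: the argument is pure combinatorics once Lemma~\ref{MetricEntropy} is in hand. The only points needing care are verifying that both scales $\varepsilon$ and $\alpha\varepsilon$ stay inside the admissible range of Lemma~\ref{MetricEntropy} (this forces the restriction $\varepsilon \leq \varepsilon_0$), noting that $V^*$ is produced as a \emph{net center} rather than as one of the packing points $U_i$, and keeping straight the two uses of the symbol $\mathfrak{M}$ --- the packing number of the ambient Grassmannian on the one hand and of the subset $B(V^*,\varepsilon)$ on the other, both taken with respect to the metric $\tilde\rho$.
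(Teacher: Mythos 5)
Your proof is correct and uses the same Yang–Barron pigeonhole argument as the paper's (Appendix D.1), the only cosmetic difference being that you start from a maximal $\alpha\varepsilon$-separated subset of $\Gkd$ and count directly how many of its points fall in a single $\varepsilon$-ball, whereas the paper bounds $\mathfrak{N}(\Gkd,\alpha\varepsilon)$ by $\sum_{U\in G_\varepsilon}\mathfrak{N}(B(U,\varepsilon),\alpha\varepsilon)$ and argues by contradiction before invoking $\mathfrak{M}\geq\mathfrak{N}$ at the end. Both routes rely identically on Lemma~\ref{MetricEntropy} at the two scales $\varepsilon$ and $\alpha\varepsilon$ and on the packing--covering inequality, and your check that $\alpha\varepsilon$ stays in the admissible range is the same care the paper implicitly takes.
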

\begin{proof}
The original proof can be found in~\citep{YangBarron99Minimax}. For convenience, we provide the proof in Appendix~\ref{appendix::minimax}.
\end{proof}

Finally, we state a bound on the KL divergence of the distribution 
$\mathbb{P}_{X}$ of samples drawn according to \eqref{eq::description}.
\begin{lemma}\label{LemmaKLBound}
For given dimensions $d$ and $k$,  there exists a universal constant $C$ such that, for any $X_1$, $X_2 \in \R^{d \times k}$  
with $\rho(X_{1},X_{2}) \leq \dfrac{\norm{X_{1}}}{3}$
and for any  $\sigma > 1$, we have:
$$
\KL{\mathbb{P}_{X_{1}} || \mathbb{P}_{X_{2}}} \leq C\sigma^{-4}\rho^{2}(X_{1},X_{2}).
$$
\end{lemma}
\begin{proof}
See Appendix \ref{KLBound}.
\end{proof}

Combining these three results, 
we obtain Theorem \ref{thm::minimax}.
The proof technique is inspired from~\citep{YangBarron99Minimax} and~\citep{CaiMaWu13RatesSparsePCA}. The full proof can be found in Appendix~\ref{appendix::minimax}.
The constant in the lower bound exhibits a dependence in the dimension, through the metric entropy of the Grassman manifold.
As we focus on the dependence in $\sigma$ and $N$ (noise level and number of samples), we leave the question of whether this dependence in the dimension is tight for future research.


\subsection{Lower bound on the sample complexity for high noise regimes}

We now show that, still when $\sigma$ is large, the number of samples drawn according to~\eqref{eq::description} necessary to reliably estimate the equivalence class of 
$[X]$ grows as $O(\sigma^{4})$.
For that matter, we provide matching upper and lower bounds:
the upper bound on the KL divergence obtained in Lemma~\ref{LemmaKLBound} in combination with Neyman--Pearson's lemma (see [~\citep{RigolletNotes}, Lemma 4.3] for instance) yields the lower bound, while properties of the estimator obtained with Algorithm~\ref{alg:estimation} give the upper bound. This result is similar in its message to, but technically different from, Theorem~\ref{thm::minimax}.


\begin{theorem}
	Let $\tau \in (0, 1/2)$ be arbitrary. There exists a constant $\tilde c$ (possibly function of $\tau$) such that the following holds: let $X_1, X_2 \in \mathcal{X}$ belong to two distinct equivalence classes, and let $Y_1, \ldots, Y_N$ be drawn according to~\eqref{eq::description}, where $X$ is either $X_1$ or $X_2$, with probability 1/2 each. Any test $\psi$ whose task it is to decide whether $X$ is $X_1$ or $X_2$ has probability of error at least $1/2-\tau$ whenever $N < \tilde{c}\sigma^{4}$.
	
Moreover, there exists a constant $\tilde{C}$ such that Algorithm~\ref{alg:estimation} outputs $\hat{X}_{N}$ satisfying 
$$\mathbb{P}\left(\rho(\hat{X}_{N},X) \leq \varepsilon\right) \geq 1 - \delta$$
whenever $N \geq \tilde{C}\dfrac{\sigma^{4}}{\delta\cdot\varepsilon^{2}}.$
\end{theorem}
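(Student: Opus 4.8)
Both halves of the statement are proved independently: the first is a two-point testing lower bound, the second an achievability claim for the estimator of Algorithm~\ref{alg:estimation} (written $\hat X_N$ in the theorem, $[\tilde X_N]$ in the algorithm). For the lower bound I would run a standard Le Cam two-point argument fed by the tight estimate of Lemma~\ref{LemmaKLBound}. Fix distinct $X_1, X_2 \in \mathcal X$ and let $\mathbb{P}_{X}^{\otimes N}$ denote the law of $(Y_1, \dots, Y_N)$. With a uniform prior, the error probability of any test is at least $\tfrac12\bigl(1 - \TVop(\mathbb{P}_{X_1}^{\otimes N}, \mathbb{P}_{X_2}^{\otimes N})\bigr)$, so it suffices to force the total-variation distance below $2\tau$. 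By Pinsker's inequality and tensorization of the KL divergence,
\[
\TVop(\mathbb{P}_{X_1}^{\otimes N}, \mathbb{P}_{X_2}^{\otimes N}) \le \sqrt{\tfrac{1}{2}\, N\, \KLop(\mathbb{P}_{X_1}\|\mathbb{P}_{X_2})}.
\]
Now I would bound $\KLop(\mathbb{P}_{X_1}\|\mathbb{P}_{X_2}) \le 4dC\,\sigma^{-4}$ using Lemma~\ref{LemmaKLBound} together with the diameter estimate $\rho^2(X_1, X_2) \le (\norm{X_1} + \norm{X_2})^2 \le 4d$, valid on $\mathcal X$. The only subtlety is that the hypothesis $\rho(X_1, X_2) \le \norm{X_1}/3$ of Lemma~\ref{LemmaKLBound} can fail for far-apart parameters; since $\mathcal X$ is compact this is harmless, as the proof of that lemma (Appendix~\ref{KLBound}) in fact yields a uniform bound $\KLop(\mathbb{P}_{X_1}\|\mathbb{P}_{X_2}) \le C_0\sigma^{-4}$ over $\mathcal X \times \mathcal X$ for $\sigma$ large. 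Either way $\TVop \le \sqrt{2dCN/\sigma^4} < 2\tau$ as soon as $N < \tilde c\,\sigma^4$ with $\tilde c = 2\tau^2/(dC)$, and then the error probability is at least $\tfrac12(1-2\tau) = \tfrac12 - \tau$.

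For the achievability claim the point is to use only a second moment of $\norm{\hat G_N - G}$ (Markov's inequality), which is what replaces the $\log(1/\delta)$ of the corollary of Section~\ref{sec:stability} by the advertised $1/\delta$. Writing $\hat G_N - G = \tfrac1N\sum_{i=1}^N W_i$ with $W_i = Y_i^TY_i - d\sigma^2 I_k - X^TX$ i.i.d.\ and mean zero, we have $\E{\norm{\hat G_N - G}^2} = \tfrac1N \E{\norm{W_1}^2}$. Expanding $W_1 = \sigma(X^TQ_1^TE_1 + E_1^TQ_1 X) + \sigma^2(E_1^TE_1 - dI_k)$, using $\norm{X} \le \sqrt d$ and elementary Gaussian second- and fourth-moment computations, one gets $\E{\norm{W_1}^2} \le C_1 \sigma^4$ for $\sigma \ge 1$ with $C_1$ polynomial in $d, k$. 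Hence $\E{\norm{\hat G_N - G}^2} \le C_1\sigma^4/N$, and Markov's inequality gives $\Prb{\norm{\hat G_N - G} > s} \le C_1\sigma^4/(N s^2)$ for every $s > 0$.

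It then remains to turn a small $\norm{\hat G_N - G}$ into a small $\rho$. Let $\tilde G_N$ be the rank-$\le d$ positive semidefinite truncation of $\hat G_N$ computed by Algorithm~\ref{alg:estimation}; the argument in the proof of Theorem~\ref{thm::stabilityEst} gives $\norm{\tilde G_N - G} \le 2\sqrt{2d}\,\norm{\hat G_N - G}$. Combined with $\sigma_d(X) \ge \eta$, this shows that once $\norm{\hat G_N - G}$ drops below a fixed threshold $r_0 \asymp \eta^2/\sqrt d$ the hypothesis of Theorem~\ref{thm::cholesky} holds, and that theorem (with $1 - \sqrt{1-x} \le x$) yields $\rho([X], [\tilde X_N]) \le \tfrac{\sqrt2}{\sigma_d(X)}\norm{\tilde G_N - G} \le \tfrac{4\sqrt d}{\eta}\norm{\hat G_N - G}$. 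Setting $s_\star = \min\{r_0,\ \eta\varepsilon/(4\sqrt d)\}$, the event $\{\norm{\hat G_N - G} \le s_\star\}$ forces $\rho([X], [\tilde X_N]) \le \varepsilon$, so
\[
\Prb{\rho([X], [\tilde X_N]) > \varepsilon} \le \frac{C_1\sigma^4}{N s_\star^2}.
\]
A short case analysis on $\varepsilon$ finishes: for $\varepsilon$ below a fixed constant (the binding regime) $s_\star = \eta\varepsilon/(4\sqrt d)$, so $s_\star^2 \asymp \eta^2\varepsilon^2/d$ and the right-hand side is $\le \delta$ whenever $N \ge \tilde C\sigma^4/(\delta\varepsilon^2)$; for larger $\varepsilon$ one uses instead the crude deterministic bound $\rho([X],[\tilde X_N]) \lesssim \sqrt{d + \norm{\hat G_N - G}}$ (via the nuclear norm of $\hat G_N$), which with the same Markov estimate gives a strictly weaker requirement on $N$. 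All constants are absorbed into a single $\tilde C = \tilde C(\eta, d, k)$.

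The main obstacle in the lower bound is obtaining a KL estimate that is \emph{uniform} over $\mathcal X \times \mathcal X$, since Lemma~\ref{LemmaKLBound} as stated requires the two parameters to be close — this is exactly where compactness of $\mathcal X$ is needed. In the upper bound, the delicate design choice is to set everything up so that a bound on the \emph{second} moment of $\norm{\hat G_N - G}$ (rather than the exponential tail of Theorem~\ref{thm::stabilityEst}) both triggers the hypothesis of Theorem~\ref{thm::cholesky} with high probability and produces the factor $1/\delta$; the underlying moment computation and constant bookkeeping are routine.
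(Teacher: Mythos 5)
Your lower bound is the paper's argument (Neyman--Pearson, Pinsker, tensorization, Lemma~\ref{LemmaKLBound}), and your observation that the hypothesis $\rho(X_1,X_2)\leq\norm{X_1}/3$ of that lemma may fail for far-apart parameters is a real gap that the paper glosses over; your repair---that $\Delta_1\equiv 0$ forces the $\chi^2$ series to start at order $\sigma^{-4}$ uniformly over the compact $\mathcal{X}$---is the right one. For the achievability half, both you and the paper obtain the $1/\delta$ factor by Markov, but you convert the Gram-matrix error into a Procrustes error through Theorem~\ref{thm::cholesky}, whose smallness hypothesis forces a threshold $r_0$, a two-case split, and the extra crude deterministic bound for large $\varepsilon$. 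The paper avoids all of this by applying Markov directly to $\rho^2(X,\hat{X})$: Lemma~\ref{lem:TuLemma} gives $\rho^2\leq L^2\sigma_d(X)^{-2}\norm{G-\tilde{G}_N}^2$ with no proximity hypothesis to verify, and the exact covariance computation of Appendix~\ref{sub::variance} then yields $\mathbb{E}[\norm{G-\hat{G}_N}^2]$ and hence $\mathbb{E}[\rho^2]=O((\sigma^2+\sigma^4)/N)$ in one step. Your route is sound, but swapping Theorem~\ref{thm::cholesky} for Lemma~\ref{lem:TuLemma} makes the case analysis disappear.
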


\begin{proof}
We start by proving that no statistical procedure can reliably distinguish between two equivalence classes whenever the
number of samples is less than $O(\sigma^{4})$. 
In particular, for any test $\psi$ using $N$ samples we have:
\begin{align*}
\mathbb{P}_{X_{1}}(\psi = 2) + \mathbb{P}_{X_{2}}(\psi = 1) &\geq 1 - \TV{\mathbb{P}_{X_{1}}^{N} || \mathbb{P}_{X_{2}}^{N}}\\
&\geq 1 - \sqrt{\dfrac{1}{2}\KL{\mathbb{P}_{X_{1}}^{N} || \mathbb{P}_{X_{2}}^{N}}}\\
&\geq 1 - \sqrt{\dfrac{1}{2}N\cdot\KL{\mathbb{P}_{X_{1}} || \mathbb{P}_{X_{2}}}}\\
&\geq 1 - \sqrt{\dfrac{CN\rho^{2}(X_{1},X_{2})}{2\sigma^{4}}},
\end{align*}
where the first inequality follows from Neyman--Pearson's Lemma while the second follows from Pinsker's inequality.
Therefore, if $N \leq \dfrac{8}{C\rho^{2}(X_{1},X_{2})}\tau^{2}\sigma^{4}$, we get that: 
\begin{align*}
\frac{1}{2}\mathbb{P}_{X_{1}}(\psi = 2) + \frac{1}{2}\mathbb{P}_{X_{2}}(\psi = 1) &\geq \frac{1}{2} - \tau,
\end{align*}
yielding the sought lower bound on the probability of error. The constant $\tilde c$ can be set uniformly against the choice of $X_1, X_2$ since $\mathcal{X}$ is bounded.

For the upper bound, we simply notice that, for large $\sigma$, by Markov's inequality,
\begin{align*}
\mathbb{P}\left(\rho(X,\hat{X}) \geq \varepsilon\right) &\leq \dfrac{1}{\varepsilon^{2}}\mathbb{E}\left[\rho^{2}(X,\hat{X})\right] 
\leq \dfrac{\tilde{C}}{\varepsilon^{2}} \dfrac{\sigma^{4}}{N},
\end{align*}
where the first inequality follows from Markov's inequality while the second inequality follows from the computations in Appendix~\ref{EstimatorMSE}. 
\end{proof}

\subsection{A comment regarding the special orthogonal group}

In our model, we consider the case where the orthogonal transformations acting on the cloud are in $\Od$. In some applications, it may be more appropriate to assume rotations $Q_1,\ldots, Q_N$ in the special orthogonal group 
$\mathcal{SO}(d)$, that is, with determinant $+1$ (no reflections).
However, the problem in $\mathcal{SO}(d)$ is harder. Indeed, by the  
first fundamental theorem of invariant theory  for the special orthogonal group,
the entries of the Gram matrix $X^{T}X$  and the $d \times d$ minors of $X$ generate the ring of invariant polynomials (see~\citep[Prop.~10.2]{kraft2000classical} for instance). 
Since our algorithm only recovers the Gram matrix, that is, the orbits of matrix $X$ up to a reflection, it falls short of estimating such reflection.
Estimating the reflection can be done by estimating the determinants of the submatrices of $X$ of size $d \times d$, which requires $O(\sigma^{2d})$ samples at high noise level, as further discussed in~\citep{Bandeira2018EstimationGroupAction}.



\subsection{Noise regimes}\label{sec::noiseRegimes}



We here summarize the adaptive optimality property of our estimator, characterized by a phase transition around a critical noise level.


\begin{proposition}\label{prop::regimes}
The estimator in Algorithm~\ref{alg:estimation} exhibits a phase transition around a critical noise level, namely:
\begin{itemize}
\item[--] If $\sigma^{2} \gg 1$, the MSE of the estimator behaves like $O\left(\sigma^{4}/N\right)$, which is optimal by Theorem~\ref{thm::minimax}.
\item[--] If $\sigma^{2} \ll 1$, the MSE of the estimator behaves like $O\left(\sigma^{2}/N\right)$, which is optimal
given the fact that the problem is strictly harder than estimating the matrix $X$ provided the rotations are available, 
in which case the lower bound on the estimation is $O(\sigma^{2}/N)$.
\end{itemize}
\end{proposition}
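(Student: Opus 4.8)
The plan is to combine three ingredients already available: the high-probability error bound in the corollary to Theorem~\ref{thm::stabilityEst}, the minimax lower bound of Theorem~\ref{thm::minimax} (for the high-noise regime), and an elementary reduction to the known-rotations problem (for the low-noise regime). The corollary gives, for $N$ large and with probability at least $1-\delta$,
$$\rho([X],[\tilde X]) \le \frac{8L\sqrt{2d}}{\sigma_d(X)}\left[\sqrt{\left(2\normop{X}^2 + d\sigma^2\right)\frac{\sigma^2}{N}k\log\frac{10}{\delta}} + \frac{\sigma^2}{N}k\log\frac{10}{\delta}\right],$$
and on $\mathcal{X}$ as defined in~\eqref{def::searchspace} we have $\normop{X}^2 \le \norm{X}^2 \le d$ and $\sigma_d(X) \ge \eta$, so the prefactor and the quantity $2\normop{X}^2 + d\sigma^2$ are uniformly controlled in terms of $d$, $k$, $\eta$.

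First I would convert this tail bound into a bound on the mean squared error $\E{\rho^2([X],[\tilde X])}$. Writing $\E{\rho^2} = \int_0^\infty \Prb{\rho^2 > t}\,dt$ and choosing $\delta$ as a function of the threshold, the factor $\log(10/\delta)$ integrates to a constant, while the complementary low-probability event (on which $\hat M_N$ has no eigengap, or $\tilde G_N$ is too far from $G = X^T X$ for the hypothesis of Theorem~\ref{thm::cholesky} to hold) contributes only $O(1)$ because $\rho$ is bounded on the compact set $\mathcal{X}$; for $N$ large this contribution is dominated. This is exactly the computation carried out in Appendix~\ref{EstimatorMSE}, and it yields
$$\E{\rho^2([X],[\tilde X])} = O\!\left(\frac{(2\normop{X}^2 + d\sigma^2)\sigma^2}{N}\right) + O\!\left(\frac{\sigma^4}{N^2}\right),$$
with constants depending only on $d$, $k$, $\eta$.

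Second I would read off the two regimes. If $\sigma^2 \gg 1$, then $d\sigma^2 \gtrsim \normop{X}^2$ (since $\normop{X}^2 \le d$), so the first term is of order $\sigma^4/N$, which dominates the $\sigma^4/N^2$ term once $N$ is large; hence $\E{\rho^2([X],[\tilde X])} = O(\sigma^4/N)$, matching the lower bound $\asymp \sigma^4/N$ of Theorem~\ref{thm::minimax}. If $\sigma^2 \ll 1$, more precisely once $\sigma^2 \lesssim \eta^2/d$, we have $2\normop{X}^2 + d\sigma^2 = O(1)$, so the first term is of order $\sigma^2/N$ and again dominates; hence $\E{\rho^2([X],[\tilde X])} = O(\sigma^2/N)$. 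The matching lower bound in this regime is immediate: our model is strictly harder than the one in which the rotations are revealed, where $\hat X = \frac{1}{N}\sum_i Q_i^T Y_i = X + \frac{\sigma}{N}\sum_i Q_i^T E_i$ satisfies $\E{\rho^2([X],[\hat X])} \le \E{\norm{\hat X - X}^2} = dk\sigma^2/N$, and the standard Gaussian-location minimax bound shows $\sigma^2/N$ cannot be beaten up to constants.

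The main obstacle is the first step: carefully passing from a statement that holds with probability $1-\delta$ to a bound on the expectation, which forces one to (i) control the rare event on which Algorithm~\ref{alg:estimation} misbehaves and bound $\rho$ crudely there using compactness of $\mathcal{X}$, and (ii) check that the $\log(10/\delta)$ tail integrates to a constant. Once this is established (Appendix~\ref{EstimatorMSE}), identifying the phase transition is elementary: it amounts to comparing $d\sigma^2$ against $\normop{X}^2$ and tracking which term of the error bound dominates, using only the normalization $\norm{X}^2 \le d$ and $\sigma_d(X) \ge \eta$ built into $\mathcal{X}$.
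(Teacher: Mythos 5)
Your overall conclusion is right, but both the route you take and your description of the paper's route need correction.

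The paper does not integrate a tail bound. Appendix~\ref{appendix::MSE} computes the covariance of $\hat M_N$ exactly, giving a closed form
\begin{align*}
\mathbb{E}\bigl[\norm{G - \hat G_N}^2\bigr] = \frac{(k+1)\sigma^2}{N}\bigl[k\sigma^2 d + \norm{X}^2\bigr],
\end{align*}
and then applies the \emph{deterministic} Lipschitz bound of Lemma~\ref{lem:TuLemma}, which holds for any pair $[X],[\tilde X]$ without a proximity hypothesis. Taking expectations of the resulting pointwise inequality $\rho^2 \le \frac{2}{(\sqrt 2 -1)\sigma_d^2(X)}\norm{G-\hat G_N}^2$ gives $\mathbb{E}[\rho^2] = O((\sigma^2+\sigma^4)/N)$ directly, with no logarithms, no covering of a bad event, and no need to condition on an eigengap. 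So it is not ``exactly the computation'' you describe: the appendix never invokes Theorem~\ref{thm::stabilityEst} or its corollary.

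Your tail-integration route can in principle recover the same rate (up to the $\log$ factors coming from the $5^k$ union bound), but the step you use to control the complementary event does not work as stated. You argue the bad event ``contributes only $O(1)$ because $\rho$ is bounded on the compact set $\mathcal{X}$''. But the estimator $\tilde X_N$ produced by Algorithm~\ref{alg:estimation} is not constrained to lie in $\mathcal{X}$: its rows are eigenvectors of $\hat M_N$ scaled by $\alpha_i = \sqrt{\max(0,\lambda_i - d\sigma^2)}$, and $\norm{\tilde X_N}^2 = \sum_i \alpha_i^2$ can be arbitrarily large on the bad event. Hence $\rho([X],[\tilde X_N])$ is not bounded by the diameter of $\mathcal{X}$. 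The fix is exactly what the paper already does: use the unconditional inequality of Lemma~\ref{lem:TuLemma} to push the bad event's contribution into $\mathbb{E}[\norm{G - \tilde G_N}^2]$, which is finite with a controllable second moment (indeed $\norm{G-\tilde G_N} \le 2\sqrt{2d}\,\normop{G-\hat G_N}$ and $\hat G_N$ has moments of all orders), or simply bypass the tail bound and compute that second moment directly. Once this is repaired, the dichotomy you read off from $2\normop{X}^2 + d\sigma^2$ (dominated by $d\sigma^2$ for $\sigma^2 \gg 1$, by $\normop{X}^2$ for $\sigma^2\lesssim\eta^2/d$) is correct, as is the reduction to the known-rotations oracle for the low-noise lower bound; both agree with the paper.
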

\begin{proof}
The proof of the upper bound relies on a computation of bounds on the MSE of the estimator proposed in
Algorithm~\ref{alg:estimation}: see Appendix~\ref{appendix::MSE}. 
\end{proof}

\section{Numerical experiments} \label{sec:XP}

In this section, we provide numerical support for our theoretical predictions.\footnote{Code to generate the figures: \url{https://github.com/thomaspdl/GeneralizedProcrustes}.}
In particular, we highlight the adaptative optimality of Algorithm~\ref{alg:estimation} 
and validate the heuristic proposed for the estimation of $\sigma^2$. 

\subsection{Noise regimes}\label{NumericsNoiseRegimes}

Figure~\ref{fig::DonohoPlots} illustrates the accuracy of Algorithm~\ref{alg:estimation} over a large range of values of $N$ (number of observations) and $\sigma$ (noise level), with $k = 100$ and $d = 3$ fixed. A clear phase transition is visible, delineating a regime where our estimator is accurate, and one where it fails. 
Crucially, the phase transition illustrates the adaptiveness of the estimator. 
Specifically, as predicted, for small $\sigma$, the number of observations needs to grow as $N \sim \sigma^2$ in order to preserve a constant MSE while compensating for noise. For large $\sigma$, this relationship deteriorates and we require $N \sim \sigma^4$. As explained in Section~\ref{sec:statisticaloptimality}, this is not merely a requirement of our estimator: any estimator is subject to the same needs, and it is a positive feature of Algorithm~\ref{alg:estimation} that it adapts automatically to both regimes.

To illustrate these two noise regimes, the graph is overlaid with two lines.
The red line of slope $1/2$ represents a noise level varying as $O(\sqrt{N})$.
Its intercept is chosen as follows:
consider an oracle which knows the rotations $Q_{i}$ affecting the measurements $Y_{i}$.
This oracle can compute the MLE simply by undoing the rotations,
then averaging.
It is easy to see that the MSE of that oracle is $\frac{\sigma^{2}dk}{N}$ in Frobenius distance (it may be slightly less in $\rho$ distance).
The red line shows the relationship between $N$ and $\sigma$ when that oracle has a relative MSE of $0.95$. It is interesting to see that, for low noise levels, the invariants-based estimator has performance similar to that oracle. The blue line of slope $1/4$ represents a noise level varying as $O(N^{\frac{1}{4}})$: its intercept is chosen manually, for illustration.


\begin{figure}[p]
 \begin{center}
 \includegraphics[width=.8\linewidth]{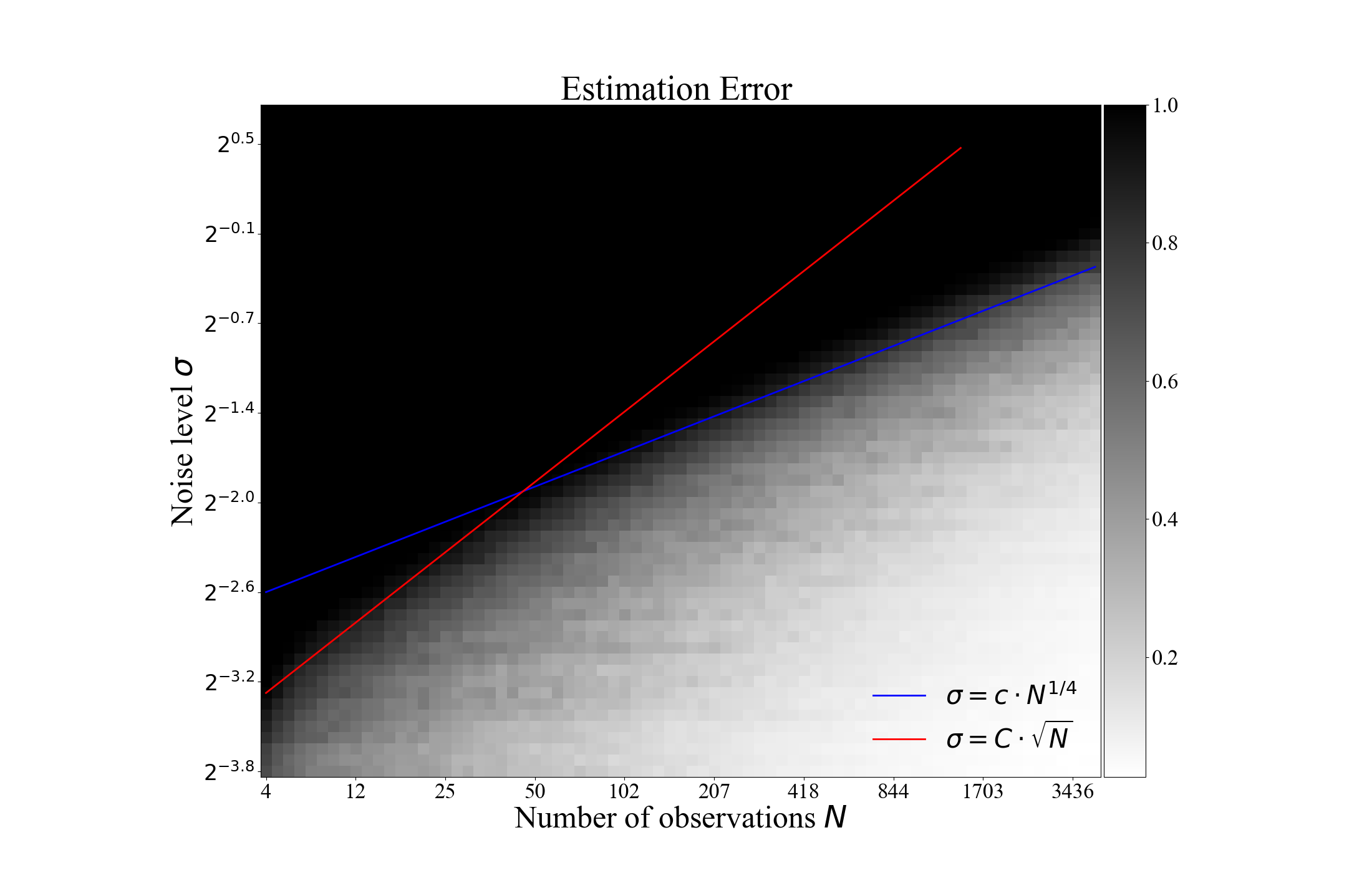}
 \end{center}
 \caption{We generate a random cloud $X \in \mathbb{R}^{d \times k}$ with $d = 3$ and $k = 100$ and i.i.d.\ Gaussian entries of variance 1. The cloud is then renormalized to have unit Frobenius norm. For each pair $(N, \sigma)$ on a log-log grid, $N$ observations of $X$ are produced with noise level $\sigma$ (known), and the equivalence class $[X]$ is estimated using Algorithm~\ref{alg:estimation}. Each pixel's brightness indicates the relative estimation error $\rho([X], [\hat X])/\norm{X}$~\eqref{eq::distance}, capped at one and then averaged over 50 independent repetitions. The blue and red lines illustrate how Algorithm~\ref{alg:estimation} is adaptive to noise levels: see Section~\ref{NumericsNoiseRegimes} for details.}
 \label{fig::DonohoPlots}
 \end{figure}

 
 \subsection{Estimation of $\sigma^2$}\label{NumericsSigma}
 
 To illustrate the accuracy of the approximation made in~\eqref{eq::SigmaEstimator},
we compute the mean empirical error of our estimator. 
 In particular, for different numbers of observations $N$, we estimate the average relative error of the estimator reported in Fig.~\ref{fig::BiasSigma}.
 We observe a small empirical relative error decreasing with the number of samples. 
\begin{figure}[h]
 \begin{center}
 \includegraphics[width=.6\linewidth]{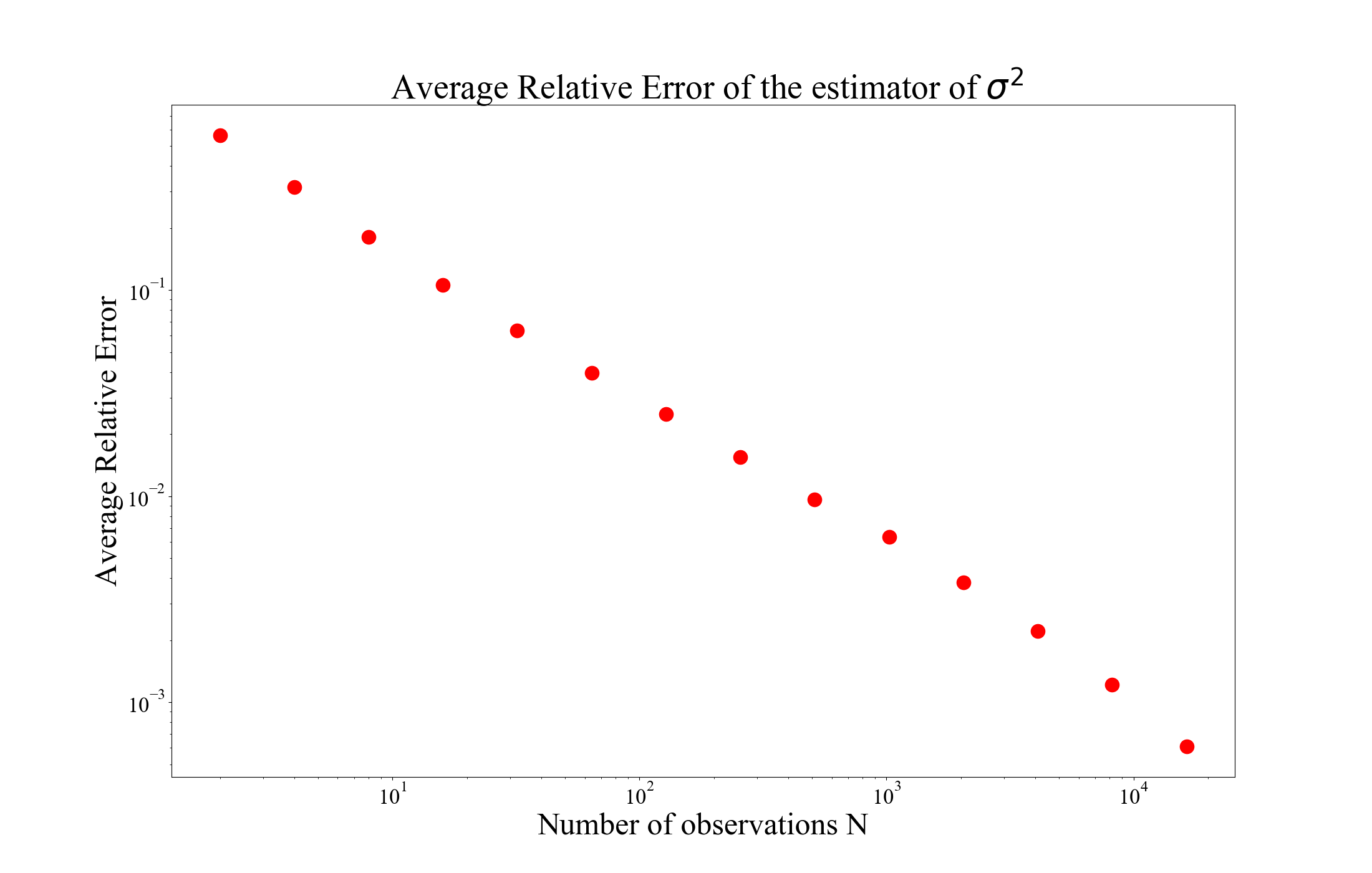}
 \end{center}
 \caption{
We generate a random cloud $X \in \mathbb{R}^{d \times k}$ with $d = 3$ and $k = 100$ and i.i.d.\ Gaussian columns.
For different numbers of samples, we compute the relative error $\frac{|\sigma^2 - \hat{\sigma}^2|}{\sigma^2}$ on the estimation of $\sigma^2$. This relative error is then averaged over 250 i.i.d.\ realizations. The results are displayed in log-log scale.
In practice, we notice that the empirical variance of the estimator is dominated by the relative error of estimation.
  }
 \label{fig::BiasSigma}
 \end{figure}

\section{Perspectives}


Throughout this paper, we only consider the case where one cloud of points must be estimated.
An interesting variant is \textit{heterogeneous} Procrustes, where each observation consists of a noisy, rotated version
of a cloud of points picked at random among $K$ possibilities, $X_1, \ldots, X_K$; for example:
\begin{align*}
	Y & = QX_s + \sigma E, \textrm{ with }  \mathbb{P}(s = j) = w_j,
\end{align*}
where $w_1, \ldots, w_K \geq 0$ sum to 1.
Several approaches to solve this problem are possible: taking a moments-based method similar to the one proposed by Hsu and Kakade~\citep{hsu2013learning} or inverting invariant features using a nonconvex optimization approach as in~\citep{boumal2017heterogeneous,ma2018mra2D} could be two interesting directions. Regarding the latter, one idea is to consider these fourth-order $\mathcal{O}(d)$-invariants: $Y^T Y \otimes Y^T Y$, where $\otimes$ is the Kronecker product.

Another possible extension is to consider observations with \emph{unlabeled} clouds of points, that is: we observe several copies of $X$ after an unknown rotation, and also after an unknown permutation of the points to model the fact that we do not know which point is which across various observations. This can be modeled as
\begin{align*}
	Y & = QXP + \sigma E,
\end{align*}
where $Q$ is an unknown orthogonal matrix of size $d \times d$ as usual, and $P$ is an unknown permutation matrix of size $k \times k$. Assuming i.i.d.\ Gaussian noise, in distribution, $Y = Q(X + \sigma E)P$, so that the singular values of $Y$ are equal to those of $X + \sigma E$: up to the noise, the singular values are (non-polynomial) invariants. Studying the distribution of the singular values of $Y$ as a function of those of $X$ may allow one to estimate the general shape of the cloud $X$ (specifically, its singular values) without the need to register clouds (no rotation estimation, and no point correspondence estimation). This connects to principal component analysis.

Yet another extension is to consider \emph{projected} observations. For example, $X$ is a cloud of points in three dimensions ($d = 3$), but observations are of the form $Y = P(QX + \sigma E)$, where $P$ is a projector to a two-dimensional plane (for example, the camera plane). Equivalently,
\begin{align*}
	Y & = U^T X + \sigma E,
\end{align*}
where $U$ is an unknown matrix of size $3 \times 2$ with orthonormal columns, and $E$ has size $2 \times k$. Assuming $U$ is uniformly distributed, $\mathbb{E}[UU^T] = \frac{2}{3}I_3$,
so that $Y^TY$, while not an invariant, does reveal information about $X$ in expectation since $\mathbb{E}[Y^T Y] = \frac{2}{3}X^T X + d\sigma^2 I_k$. Much of the machinery developed in the present paper applies directly to this extended setting: it would be interesting to study it further.

In all of these, one could also include the possibility that clouds are not centered, that is: they are observed only after an unknown translation, on top of other transformations.


%
%

\section*{Acknowledgments}

We thank Afonso Bandeira, Adrien Bilal, Jianqing Fan, Joe Kileel and Joao Pereira for insightful discussions.

\section*{Funding}

NB is partially supported by NSF award DMS-1719558.

\section*{Data Availability Statement} 

The data underlying this article are available on GitHub, at https://github.com/thomaspdl/GeneralizedProcrustes.


\bibliographystyle{IMAIAI}
\bibliography{pumir}

\ifx\undefined\BySame
\newcommand{\BySame}{\leavevmode\rule[.5ex]{3em}{.5pt}\ }
\fi
\ifx\undefined\textsc
\newcommand{\textsc}[1]{{\sc #1}}
\newcommand{\emph}[1]{{\em #1\/}}
\let\tmpsmall\small
\renewcommand{\small}{\tmpsmall\sc}
\fi
\begin{thebibliography}{50}
\providecommand{\natexlab}[1]{#1}
\providecommand{\url}[1]{\texttt{#1}}
\expandafter\ifx\csname urlstyle\endcsname\relax
  \providecommand{\doi}[1]{doi: #1}\else
  \providecommand{\doi}{doi: \begingroup \urlstyle{rm}\Url}\fi

\bibitem[Abbe et~al.(2018{\natexlab{a}})Abbe, Bendory, Leeb, Pereira, Sharon,
  and Singer]{Abbeetal18AperiodicMRA}
E.~Abbe, T.~Bendory, W.~Leeb, J.~Pereira, N.~Sharon, and A.~Singer.
\newblock Multireference {A}lignment is {E}asier with an {A}periodic
  {T}ranslation {D}istribution.
\newblock \emph{IEEE Transactions on Information Theory}, 65\penalty0
  (6):\penalty0 3565--3584, 2018{\natexlab{a}}.

\bibitem[Abbe et~al.(2018{\natexlab{b}})Abbe, Pereira, and
  A]{AbbePereiraSinger18EstimationGpActionLB}
E.~Abbe, J.~Pereira, and S.~A.
\newblock {Estimation in the group action channel}.
\newblock \emph{International Symposium on Information Theory ({ISIT})}, pages
  561--565, 2018{\natexlab{b}}.

\bibitem[Absil et~al.(2008)Absil, Mahony, and Sepulchre]{AMS08}
P.-A. Absil, R.~Mahony, and R.~Sepulchre.
\newblock \emph{Optimization Algorithms on Matrix Manifolds}.
\newblock Princeton University Press, Princeton, NJ, 2008.
\newblock ISBN 978-0-691-13298-3.

\bibitem[Bandeira et~al.(2017{\natexlab{a}})Bandeira, Blum-Smith, Kileel,
  Perry, Weed, and Wein]{Bandeira2018EstimationGroupAction}
A.~Bandeira, B.~Blum-Smith, J.~Kileel, A.~Perry, J.~Weed, and A.~Wein.
\newblock Estimation under group actions: recovering orbits from invariants.
\newblock \emph{preprint,https://arxiv.org/abs/1712.10163}, 2017{\natexlab{a}}.

\bibitem[Bandeira et~al.(2017{\natexlab{b}})Bandeira, Rigollet, and
  Weed]{BandeiraRigolletWeed2018RatesMRA}
A.~Bandeira, P.~Rigollet, and J.~Weed.
\newblock Optimal rates of estimation for multi-reference alignment.
\newblock \emph{Mathematical Statistics and Learning}, 2(1):\penalty0 25--75,
  2017{\natexlab{b}}.

\bibitem[Bandeira et~al.(2014)Bandeira, Charikar, Singer, and
  Zhu]{Bandeiraetal14MRASDP}
A.~S. Bandeira, M.~Charikar, A.~Singer, and A.~Zhu.
\newblock Multireference alignment using semidefinite programming.
\newblock In \emph{Innovations in Theoretical Computer Science, ITCS'14,
  Princeton, NJ, USA, January 12-14, 2014}, pages 459--470, 2014.
\newblock \doi{10.1145/2554797.2554839}.
\newblock URL \url{https://doi.org/10.1145/2554797.2554839}.

\bibitem[Bandeira et~al.(2016)Bandeira, Kennedy, and
  Singer]{bandeira2016Grothendieck}
A.~S. Bandeira, C.~Kennedy, and A.~Singer.
\newblock Approximating the {L}ittle {G}rothendieck {P}roblem over the
  {O}rthogonal and {U}nitary {G}roups.
\newblock \emph{Mathematical Programming SERIES A}, 160:\penalty0 433--475,
  2016.

\bibitem[Bendory et~al.(2018)Bendory, Boumal, Ma, Zhao, and
  Singer]{bendory2018bispectruminversion}
T.~Bendory, N.~Boumal, C.~Ma, Z.~Zhao, and A.~Singer.
\newblock Bispectrum {I}nversion with {A}pplication to {M}ultireference
  {A}lignment.
\newblock \emph{IEEE Transactions on Signal Processing}, 66\penalty0
  (4):\penalty0 1037--1050, 2018.
\newblock \doi{10.1109/TSP.2017.2775591}.

\bibitem[Birgé(2001)]{Birge01Lepski}
L.~Birgé.
\newblock An alternative point of view on lepski's method.
\newblock \emph{Lecture Notes-Monograph Series, Institute of Mathematical
  Statistics}, pages 113--133, 01 2001.
\newblock \doi{10.2307/4356108}.

\bibitem[Borg and Groenen(2005)]{MDS05}
I.~Borg and P.~J. Groenen.
\newblock \emph{Modern Multidimensional Scaling: Theory and Applications
  (Second Edition)}.
\newblock Springer, New York, NY, 2005.
\newblock ISBN 0-387-25150-2.

\bibitem[Boumal et~al.(2018)Boumal, Bendory, Lederman, and
  Singer]{boumal2017heterogeneous}
N.~Boumal, T.~Bendory, R.~Lederman, and A.~Singer.
\newblock Heterogeneous multireference alignment: A single pass approach.
\newblock In \emph{2018 52nd Annual Conference on Information Sciences and
  Systems ({CISS})}, pages 1--6, March 2018.
\newblock \doi{10.1109/CISS.2018.8362313}.

\bibitem[Cai et~al.(2013)Cai, Ma, and Wu]{CaiMaWu13RatesSparsePCA}
T.~Cai, Z.~Ma, and Y.~Wu.
\newblock Sparse {PCA}: Optimal rates and adaptive estimation.
\newblock \emph{Annals of Statistics}, 41(6):\penalty0 3074--3110, 2013.

\bibitem[Chang and
  Stehl\'e(2010)]{ChangStehle10RigorousBoundsMatrixFactorization}
X.-W. Chang and D.~Stehl\'e.
\newblock Rigorous {P}erturbation {B}ounds of {S}ome {M}atrix {F}actorizations.
\newblock \emph{SIAM Journal on Matrix Analysis and Applications},
  31(5):\penalty0 2841--2859, 2010.

\bibitem[Fano(1952)]{FanoNotes}
R.~M. Fano.
\newblock Class notes for {MIT} course 6.574: {T}ransmission of {I}nformation.
\newblock \emph{Lecture Notes}, 1952.

\bibitem[Giannakis(1989)]{Giannakis89Reconstruction}
G.~B. Giannakis.
\newblock Signal reconstruction from multiple correlations: frequency-and
  time-domain approaches.
\newblock \emph{JOSA A}, 6\penalty0 (5):\penalty0 682--697, 1989.

\bibitem[Goemans and Williamson(1995)]{goemans1995maxcut}
M.~Goemans and D.~Williamson.
\newblock Improved {A}pproximation {A}lgorithms for {M}aximum {C}ut and
  {S}atisfiability {P}roblems {U}sing {S}emidefinite {P}rogramming.
\newblock \emph{Journal of the ACM (JACM)}, 42\penalty0 (6):\penalty0
  1115--1145, 1995.
\newblock \doi{10.1145/227683.227684}.

\bibitem[Gower and Dijksterhuis(2005)]{Procrustes05}
J.~Gower and G.~Dijksterhuis.
\newblock \emph{Procrustes Problem}, volume~30.
\newblock Oxford Statistical Science Series, 2005.
\newblock ISBN 978-0198510581.

\bibitem[Grave et~al.(2019)Grave, Joulin, and
  Berthet]{GraveJoulinBerthet18WassersteinProcrustes}
E.~Grave, A.~Joulin, and Q.~Berthet.
\newblock Unsupervised {A}lignment of {E}mbeddings with {W}asserstein
  {P}rocrustes.
\newblock \emph{AISTATS}, 2019.

\bibitem[Green and Carroll(1976)]{AppliedMultivariate76}
P.~E. Green and R.~D. Carroll.
\newblock Mathematical {T}ools for {A}pplied {M}ultivariate {A}nalysis.
\newblock \emph{New York, Academic Press}, 1976.

\bibitem[Hsu and Kakade(2013)]{hsu2013learning}
D.~Hsu and S.~Kakade.
\newblock Learning mixtures of spherical {G}aussians: moment methods and
  spectral decompositions.
\newblock In \emph{Proceedings of the 4th conference on Innovations in
  Theoretical Computer Science (ITCS)}, pages 11--20. ACM, 2013.

\bibitem[Hurley and Cattell(1962)]{Procrustes62}
J.~R. Hurley and R.~B. Cattell.
\newblock The {P}rocrustes program: Producing direct rotation to test a
  hypothesized factor structure.
\newblock \emph{Behavioral Science}, 7:\penalty0 258--262, 1962.

\bibitem[Joly et~al.(2017)Joly, Lugosi, and Oliveira]{joly2017meanrandomvector}
E.~Joly, G.~Lugosi, and R.~Oliveira.
\newblock On the estimation of the mean of a random vector.
\newblock \emph{Electron. J. Statist.}, 11\penalty0 (1):\penalty0 440--451,
  2017.
\newblock \doi{10.1214/17-EJS1228}.
\newblock URL \url{https://doi.org/10.1214/17-EJS1228}.

\bibitem[Khot and Naor(2012)]{khot2012grothendieck}
S.~Khot and A.~Naor.
\newblock {G}rothendieck-{T}ype {I}nequalities in {C}ombinatorial
  {O}ptimization.
\newblock \emph{Communications on Pure and Applied Mathematics}, 65\penalty0
  (7):\penalty0 992--1035, 2012.
\newblock \doi{10.1002/cpa.21398}.

\bibitem[Kraft and Procesi(1996)]{kraft2000classical}
H.~Kraft and C.~Procesi.
\newblock Classical invariant theory: a primer.
\newblock \emph{Lecture notes}, 1996.

\bibitem[K\v{a}c(1994)]{KacNotes}
V.~K\v{a}c.
\newblock Invariant theory.
\newblock \emph{Lecture notes,
  https://people.kth.se/~laksov/notes/invariant.pdf}, 1994.

\bibitem[Lee(2012)]{lee2012smoothmanifolds}
J.~Lee.
\newblock \emph{Introduction to Smooth Manifolds}, volume 218 of \emph{Graduate
  Texts in Mathematics}.
\newblock Springer-Verlag New York, 2 edition, 2012.
\newblock ISBN 978-1-4419-9981-8.
\newblock \doi{10.1007/978-1-4419-9982-5}.

\bibitem[Ma et~al.(2020)Ma, Bendory, Boumal, Sigworth, and Singer]{ma2018mra2D}
C.~Ma, T.~Bendory, N.~Boumal, F.~Sigworth, and A.~Singer.
\newblock Heterogeneous multireference alignment for images with application to
  {2-D} classification in single particle reconstruction.
\newblock \emph{IEEE Transactions on Image Processing}, 9:\penalty0 1699--1710,
  2020.
\newblock \doi{10.1109/TIP.2019.2945686}.

\bibitem[Man-Cho~So(2010)]{Mansocho2010momentinequalities}
A.~Man-Cho~So.
\newblock Moment inequalities for sums of random matrices and their
  applications in optimization.
\newblock \emph{Mathematical programming}, 130(1):\penalty0 125--151, 2010.

\bibitem[Massart and Absil(2020)]{MassartAbsil18QuotientPSD}
E.~Massart and P.-A. Absil.
\newblock Quotient geometry with simple geodesics for the manifold of
  fixed-rank positive-semidefinite matrices.
\newblock \emph{SIAM J. Matrix Anal. Appl.}, pages 171--198, 2020.

\bibitem[Naor et~al.(2013)Naor, Regev, and Vidick]{naor2013efficient}
A.~Naor, O.~Regev, and T.~Vidick.
\newblock Efficient {R}ounding for the {N}oncommutative {G}rothendieck
  inequality.
\newblock In \emph{Proceedings of the forty-fifth annual ACM symposium on
  Theory of computing}, pages 71--80. ACM, 2013.

\bibitem[Nemirovski(2007)]{nemirovski2007quadratic}
A.~Nemirovski.
\newblock Sums of random symmetric matrices and quadratic optimization under
  orthogonality constraints.
\newblock \emph{Mathematical Programming}, 109\penalty0 (2--3):\penalty0
  283--317, 2007.
\newblock \doi{10.1007/s10107-006-0033-0}.

\bibitem[Nemirovsky and Yudin(1983)]{NemirovskyYudin83OptimizationComplexity}
A.~S. Nemirovsky and D.~B. Yudin.
\newblock \emph{Problem Complexity and Method Efficiency in Optimization}.
\newblock Wiley-Interscience Series in Discrete Mathematics, 1983.
\newblock Translated from the Russian and with a preface by E. R. Dawson.

\bibitem[Neyman and Pearson(1933)]{NeymanPearson33Test}
J.~Neyman and E.~S. Pearson.
\newblock On the {P}roblem of the {M}ost {E}fficient {T}ests of {S}tatistical
  {H}ypotheses.
\newblock \emph{Philosophical Transactions of the Royal Society of London.
  Series A, Containing Papers of a Mathematical or Physical Character},
  231:\penalty0 289--337, 1933.

\bibitem[Perry et~al.(2017)Perry, Weed, Bandeira, Rigollet, and
  Singer]{Perryetal2017SampleComplexityMRA}
A.~Perry, J.~Weed, A.~Bandeira, P.~Rigollet, and A.~Singer.
\newblock The sample complexity of multi-reference alignment.
\newblock \emph{SIAM Journal on Mathematics of Data Science}, 1(3):\penalty0
  497--517, 2017.

\bibitem[Rigollet and Hutter(2017)]{RigolletNotes}
P.~Rigollet and J.-C. Hutter.
\newblock High dimensional statistics.
\newblock \emph{Lecture Notes}, 2017.

\bibitem[Sadler and Giannakis(1992)]{SadlerGiannakis92reconstruction}
B.~M. Sadler and G.~B. Giannakis.
\newblock Shift and {R}otation {I}nvariant {O}bject {R}econstruction {U}sing
  {T}he {B}ispectrum.
\newblock \emph{JOSA A}, 9\penalty0 (1):\penalty0 57--69, 1992.

\bibitem[Sch{\"o}nemann(1966)]{Schonemann66Procrustes}
P.~H. Sch{\"o}nemann.
\newblock A generalized solution of the orthogonal {P}rocrustes problem.
\newblock \emph{Psychometrika}, 31(1):\penalty0 1--10, 1966.

\bibitem[Singer(2011)]{singer2010angular}
A.~Singer.
\newblock Angular synchronization by eigenvectors and semidefinite programming.
\newblock \emph{Applied and Computational Harmonic Analysis}, 30\penalty0
  (1):\penalty0 20--36, 2011.
\newblock \doi{10.1016/j.acha.2010.02.001}.

\bibitem[Singer(2018)]{Singer2018MathematicsFC}
A.~Singer.
\newblock Mathematics for cryo-electron microscopy.
\newblock \emph{Proc. Int. Cong. of Math.}, 3:\penalty0 3981--4000, 2018.

\bibitem[Smith et~al.(2017)Smith, Turban, Hamblin, and
  Hammerla]{Smithetal17OfflineBilingualWordvectors}
S.~L. Smith, D.~H.~P. Turban, S.~Hamblin, and N.~Y. Hammerla.
\newblock Offline bilingual word vectors, orthogonal transformations and the
  inverted softmax.
\newblock In \emph{5th International Conference on Learning Representations,
  {ICLR} 2017, Toulon, France, April 24-26, 2017, Conference Track
  Proceedings}. OpenReview.net, 2017.
\newblock URL \url{https://openreview.net/forum?id=r1Aab85gg}.

\bibitem[Szarek(1982)]{Szarek98NetsGrassman}
S.~Szarek.
\newblock Nets of {G}rassmann manifold and orthogonal groups.
\newblock In \emph{Proceedings of Banach Spaces Workshop, University of Iowa
  Press}, pages 169--185. 1982.

\bibitem[Ten~Berge(1977)]{tenberge1977procrustes}
J.~Ten~Berge.
\newblock Orthogonal {P}rocrustes rotation for two or more matrices.
\newblock \emph{Psychometrika}, 42\penalty0 (2):\penalty0 267--276, 1977.
\newblock \doi{10.1007/BF02294053}.

\bibitem[Tsybakov(2009)]{Tsybakov2009book}
A.~Tsybakov.
\newblock \emph{Introduction to Nonparametric Estimation}.
\newblock Springer-Verlag, 2009.

\bibitem[Tu et~al.(2016)Tu, Boczar, Simchowitz, Soltanolkotabi, and
  Recht]{Tuetal2016LowRankProcrustesFlow}
S.~Tu, R.~Boczar, M.~Simchowitz, M.~Soltanolkotabi, and B.~Recht.
\newblock Low-rank {S}olutions of {L}inear {M}atrix {E}quations via
  {P}rocrustes {F}low.
\newblock \emph{ICML 16 Proceedings of the 33rd International Conference on
  International Conference on Machine Learning}, 48:\penalty0 964--973, 2016.

\bibitem[Tukey(1984)]{Tukey84Spectral}
J.~W. Tukey.
\newblock \emph{The spectral representation and transformation properties of
  the higher moments of stationary time series}, volume~1.
\newblock Wadsworth, 1984.

\bibitem[Vandereycken et~al.(2009)Vandereycken, Absil, and
  Vandewalle]{vandereycken2009psdfixedrank}
B.~Vandereycken, P.-A. Absil, and S.~Vandewalle.
\newblock Embedded geometry of the set of symmetric positive semidefinite
  matrices of fixed rank.
\newblock In \emph{2009 {IEEE/SP} 15th Workshop on Statistical Signal
  Processing}, pages 389--392, Aug 2009.
\newblock \doi{10.1109/SSP.2009.5278558}.

\bibitem[Vershynin(2012)]{Ver10}
R.~Vershynin.
\newblock \emph{Introduction to the non-asymptotic analysis of random
  matrices}, pages 210--268.
\newblock Cambridge University Press, 2012.
\newblock \doi{10.1017/CBO9780511794308.006}.

\bibitem[Xing et~al.(2015)Xing, Wang, Liu, and
  Lin]{XingWangLiuLin15OrthogonalEmbeddings}
C.~Xing, D.~Wang, C.~Liu, and Y.~Lin.
\newblock Normalized {W}ord {E}mbedding and {O}rthogonal {T}ransform for
  {B}ilingual {W}ord {T}ranslation.
\newblock \emph{Conference of the North American Chapter of the Association for
  Computational Linguistics: Human Language Technologies({NAACL HLT})}, pages
  1006--1011, 2015.

\bibitem[Yang and Barron(1999)]{YangBarron99Minimax}
Y.~Yang and A.~Barron.
\newblock Information-{T}heoretic {D}etermination of {M}inimax {R}ates of
  {C}onvergence.
\newblock \emph{Annals of Statistics}, 27:\penalty0 1564--1599, 1999.

\bibitem[Zhang(2000)]{Zhang00CameraCalibration}
Z.~Zhang.
\newblock A flexible new technique for camera calibration.
\newblock \emph{IEEE Transactions on Pattern Analysis and Machine
  Intelligence}, 22:\penalty0 1330--1334, December 2000.
\newblock URL
  \url{https://www.microsoft.com/en-us/research/publication/a-flexible-new-technique-for-camera-calibration/}.
\newblock MSR-TR-98-71, Updated March 25, 1999.

\end{thebibliography}

\clearpage

\appendix

\section{Procrustes distance}\label{distance}


Here, we show how to explicitly compute the distance $\rho$~\eqref{eq::distance}.
Since:
$$
\norm{X_{1} - QX_{2}}^{2} = \norm{X_{1}}^{2}  + \norm{X_{2}}^{2}  - 2\dotprod{X_{1},QX_{2}},
$$
finding the optimal $Q$ is equivalent to solving:
$$
\max_{Q \in \mathcal{O}(d)}\langle X_{1},QX_{2} \rangle = \max_{Q \in \mathcal{O}(d)}\Tr{QX_{2}X_{1}^{T}}.
$$
By SVD we can write $X_{1}^{}X_{2}^{T} = U\Sigma V^{T}$ and the solution of the problem is given by the polar factor of $X_{1}^{}X_{2}^{T}$:
\begin{equation}\label{eq::optrotation}
Q = UV^{T}.
\end{equation}
\section{Impossibility of estimation for the rotations}\label{ImpossibilityEstimation}


Here, we give a proof of Proposition~\eqref{prop::ImpossibilityRotations}.
We start by reminding the statement of the proposition below.
\begin{proposition}
Let us consider the following hypothesis testing problem on the distribution of the unknown distribution $\mathbb{P}$ of the sample: 
\begin{align*}
& H_{1}: \mathbb{P}_{1} \sim \mathcal{N}(QX,\sigma^{2}I_{dk}),\\
& H_{2}: \mathbb{P}_{2} \sim \mathcal{N}(-QX,\sigma^{2}I_{dk}).
\end{align*}
Then for any test $\psi$ and for any precision $\delta > 0$, it is possible to find a sufficiently large noise level $\sigma_{0}$ such that, 
for any noise level $\sigma$ larger than $\sigma_{0}$, the sum of type I and type II errors is large:
\begin{align*}
\mathbb{P}_{1}(\psi(Y) = 2) + \mathbb{P}_{2}(\psi(Y) = 1)   &\geq 1 - 2\delta.
\end{align*}
Hence, if the two hypotheses are equally likely; the probability of error is at least $1/2 - \delta$.
\end{proposition}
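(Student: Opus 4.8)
The plan is to reduce to a one-dimensional problem and then invoke the optimality of the likelihood-ratio test for distinguishing two Gaussians with the same covariance. First I would observe that the two hypotheses $H_1$ and $H_2$ are Gaussians in $\R^{dk}$ with the same (isotropic) covariance $\sigma^2 I_{dk}$ and means $\pm \vect{QX}$, which differ by the vector $2\vect{QX}$. By rotational invariance of the isotropic Gaussian, the testing problem depends on the two means only through the scalar $\|\vect{QX}\| = \norm{X}$ (using that $Q$ is orthogonal). Concretely, projecting $Y$ onto the direction $\vect{QX}/\norm{X}$ yields a sufficient statistic $Z \in \R$ that is distributed as $\mathcal{N}(\norm{X}, \sigma^2)$ under $H_1$ and $\mathcal{N}(-\norm{X}, \sigma^2)$ under $H_2$, and all other coordinates carry no information (their distribution is the same under both hypotheses). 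So it suffices to analyze the scalar problem.

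Next I would recall the Neyman--Pearson lemma: among all tests, the one minimizing the sum of type I and type II errors (for equal priors, the Bayes-optimal test) is the likelihood-ratio test, here equivalently the sign test $\psi^*(Z) = 1$ if $Z > 0$, $2$ if $Z < 0$. Hence for \emph{any} test $\psi$,
\begin{align*}
\mathbb{P}_1(\psi(Y) = 2) + \mathbb{P}_2(\psi(Y) = 1) & \geq \mathbb{P}_1(Z < 0) + \mathbb{P}_2(Z > 0) = 2\,\Phi\!\left( -\frac{\norm{X}}{\sigma} \right),
\end{align*}
where $\Phi$ is the standard normal CDF and I used symmetry of the two errors for the sign test. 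It now remains to make the right-hand side close to $1$ by taking $\sigma$ large. Since $\Phi$ is continuous with $\Phi(0) = 1/2$, we have $2\Phi(-\norm{X}/\sigma) \to 1$ as $\sigma \to \infty$; so given $\delta > 0$, choose $\sigma_0$ large enough that $\norm{X}/\sigma_0$ is small enough to force $2\Phi(-\norm{X}/\sigma_0) \geq 1 - 2\delta$, and then the same bound holds for all $\sigma \geq \sigma_0$ because $\Phi(-\norm{X}/\sigma)$ is increasing in $\sigma$. Dividing by $2$ gives the stated lower bound of $1/2 - \delta$ on the probability of error under equal priors.

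The only mildly delicate point — and the one I would be most careful about — is the sufficiency/reduction step: justifying that without loss of generality the problem is the scalar one, i.e.\ that no test on the full vector $Y$ can do better than the best test on the projection $Z$. This follows because the conditional distribution of $Y$ given $Z$ is identical under $H_1$ and $H_2$ (the orthogonal complement of $\vect{QX}$ sees only the common isotropic noise), so $Z$ is a sufficient statistic and the data-processing/Rao--Blackwell argument shows the optimal risk is unchanged. Everything after that is a routine Gaussian tail computation. An alternative, equally clean route avoiding an explicit sufficient-statistic argument is to lower bound the total error directly by $1 - \TV{\mathbb{P}_1 \| \mathbb{P}_2}$ and then bound the total variation distance between the two Gaussians via Pinsker's inequality and $\KL{\mathbb{P}_1 \| \mathbb{P}_2} = 2\norm{X}^2/\sigma^2 \to 0$; I would likely present the sign-test version since it is sharper and more transparent, but mention the TV/Pinsker bound as it connects directly to the KL-based machinery used elsewhere in the paper.
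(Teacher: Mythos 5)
Your proposal is correct and is essentially the same argument as the paper's: the paper computes the likelihood ratio explicitly and observes it is a monotone function of $\langle Y, QX\rangle$, which is exactly the one-dimensional sufficient statistic $Z$ you project onto, and both arrive at the error probability $\Phi(-\norm{X}/\sigma)$ (the paper writes $F$ for $\Phi$) before sending $\sigma\to\infty$. Your framing via sufficiency and projection is slightly more explicit about the dimension reduction, but it is the same Neyman--Pearson computation.
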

\begin{proof}
The likelihood ratio test consists of studying the ratio of the densities $L(Y) = \dfrac{f_{1}(Y)}{f_{2}(Y)}$ 
given observation $Y$.
If $L(Y) > 1$, then $H_{1}$ is kept; otherwise, $H_{2}$ is kept.
Hence, $\mathbb{P}_{1}(\psi^{*} = 2) = \mathbb{P}_{1}(L(Y) < 1)$.
\begin{align*}
L(Y) &= \dfrac{f_{1}(Y)}{f_{2}(Y)} = \dfrac{e^{-\frac{1}{2\sigma^{2}} \norm{Y - QX} ^2}}{e^{-\frac{1}{2\sigma^{2}} \norm{Y + QX}^2 }} \\
&=  \dfrac{e^{-\frac{1}{2\sigma^{2}} (\norm{Y}^2 + \norm{X}^2 - 2 \dotprod{Y, QX}) }}
{e^{-\frac{1}{2\sigma^{2}} (\norm{Y}^2  + \norm{X}^2 + 2 \dotprod{Y, QX}) }} \\
&= e^{\frac{2}{\sigma^{2}}\dotprod{Y, QX}} = e^{\frac{2}{\sigma^{2}}\dotprod{QX + \sigma E , QX}} \\
&= e^{\frac{2}{\sigma^{2}}( \norm{X}^{2} + \sigma \dotprod{E , QX})}. 
\end{align*}
This yields:
$$L(Y) < 1 \iff \log(L(Y)) < 0 \iff \norm{X}^{2} + \sigma \dotprod{E, QX} < 0.$$
Yet,
\begin{align*}
\mathbb{P}(\norm{X}^{2} + \sigma \dotprod{E, QX} < 0) &= \mathbb{P}(\mu + \tilde{\sigma}\xi < 0), 
\end{align*}
with $\mu = \norm{X}^{2}$, $\tilde{\sigma} = \sigma \norm{X}$ and $\xi \sim \mathcal{N}(0,1)$.
Hence,
\begin{align*}
\mathbb{P}_{1}(\psi^{*} = 2)  &= \mathbb{P}(\mu + \tilde{\sigma}\xi < 0) \\
&= \mathbb{P}(\xi < -\dfrac{\mu}{\tilde{\sigma}}) \\
&= \mathbb{P}\left(\xi < -\dfrac{\norm{X}}{\sigma}\right) \\
&= F\left(-\dfrac{\norm{X}}{\sigma}\right),
\end{align*}
where $F$ is the cumulative density function of a standard Gaussian random variable.
Similarly, we get:
$$
\mathbb{P}_{2}\left(\psi^{*} = 1\right) = F\left(-\dfrac{\norm{X}}{\sigma}\right).
$$
Hence, by Neyman--Pearson's lemma~\citep{NeymanPearson33Test}, 
it is sufficient to take $\sigma$ such that $F\left(-\dfrac{\norm{X}}{\sigma}\right) = \dfrac{1 - 2\delta}{2}$
to get that for any test $\psi$:
$$
\mathbb{P}_{2}(\psi = 1) + \mathbb{P}_{1}(\psi = 2) \geq 1 - 2\delta,
$$
where $F$ is the cumulative density function of a standard normal distribution. 
\end{proof}

\section{Stability of the Cholesky decomposition}\label{CholAppendix}

We give a proof of Proposition~\ref{lem::operatorInverse} which is used in the proof of Theorem~\eqref{thm::cholesky} about the stability of the estimator given by Algorithm~\ref{alg:estimation}.
\begin{proof}
Let $X = U\Sigma V^T$ be the thin SVD of $X$. Here, $V \in \R^{k \times d}$ has orthonormal columns, $U$ is an orthogonal matrix of size $d \times d$ and $\Sigma$ is a diagonal matrix of size $d$ with entries $\sigma_1 \geq \cdots \geq \sigma_d > 0$. It is always possible to pick $V_\perp$ (a complement of the orthonormal basis $V$) such that $\begin{bmatrix} V & V_\perp \end{bmatrix}$ is an orthogonal matrix of size $k \times k$. Hence, for any $\dot X \in \R^{d \times k}$, there exist matrices $A, B$ of appropriate size such that
\begin{align*}
	\dot X & = UAV^T + UBV_\perp^T.
\end{align*}
Using this parameterization,
\begin{align*}
	\mathcal{L}_X(\dot X) & = V\Sigma (AV^T + BV_\perp^T) + (VA^T + V_\perp B^T)\Sigma V^T \\
					& = \begin{bmatrix}
					V & V_\perp
					\end{bmatrix} \begin{bmatrix}
					\Sigma A + A^T \Sigma & \Sigma B \\ B^T \Sigma & 0
					\end{bmatrix} \begin{bmatrix}
					V & V_\perp
					\end{bmatrix}^T.
\end{align*}
We use this to derive an SVD of $\mathcal{L}_X$ restricted to $\mathrm{H}_X$, that is, for $\dot X$ such that $A\Sigma = \Sigma A^T$. To this end, consider the orthonormal basis of $\mathrm{H}_X$ composed of the following elements, and its image through $\mathcal{L}_X$ (below, $e_i$ denotes the $i$th column of the identity matrix of appropriate dimension as indicated by context):
\begin{enumerate}
	\item With $A = e_ie_i^T, B = 0$ for $1 \leq i \leq d$, \\ $\mathcal{L}_X(\dot X) = 2 \sigma_i \begin{bmatrix}
	V & V_\perp
	\end{bmatrix} \begin{bmatrix}
	e_ie_i^T & 0 \\ 0 & 0
	\end{bmatrix} \begin{bmatrix}
	V & V_\perp
	\end{bmatrix}^T$;
	\item With $A = \frac{\sigma_i e_i e_j^T + \sigma_j e_j e_i^T}{\sqrt{\sigma_i^2 + \sigma_j^2}}, B = 0$ for $1 \leq i < j \leq d$, \\ $\mathcal{L}_X(\dot X) = \sqrt{2(\sigma_i^2 + \sigma_j^2)} \begin{bmatrix}
	V & V_\perp
	\end{bmatrix} \begin{bmatrix}
	\frac{e_ie_j^T + e_je_i^T}{\sqrt{2}} & 0 \\ 0 & 0
	\end{bmatrix} \begin{bmatrix}
	V & V_\perp
	\end{bmatrix}^T$;
	\item With $A = 0, B = e_i e_j^T$ for $1 \leq i \leq d$ and $1 \leq j \leq k-d$, \\ $\mathcal{L}_X(\dot X) = \sqrt{2}\sigma_i \begin{bmatrix}
	V & V_\perp
	\end{bmatrix} \begin{bmatrix}
	0 & \frac{e_i e_j^T}{\sqrt{2}} \\ \frac{e_j e_i^T}{\sqrt{2}} & 0
	\end{bmatrix} \begin{bmatrix}
	V & V_\perp
	\end{bmatrix}^T$.
\end{enumerate}
To verify that the inputs yield an orthonormal basis of $\mathrm{H}_X$ as announced, check that each of these choices yields a matrix $\dot X$ in $\mathrm{H}_X$; they are indeed orthonormal; and they are in sufficient number to cover $\dim \mathrm{H}_X = dk - \dim \mathrm{V}_X = dk - \frac{d(d-1)}{2}$. As the outputs of $\mathcal{L}_X$ applied to the basis elements are also orthogonal, it is clear that the singular values of $\mathcal{L}_X$ restricted to $\mathrm{H}_X$ are:
\begin{enumerate}
	\item $2\sigma_1, \ldots, 2\sigma_d$;
	\item $\sqrt{2(\sigma_i^2 + \sigma_d^2)}$ for $1 \leq i < j \leq d$; and
	\item $\sqrt{2}\sigma_1, \ldots, \sqrt{2}\sigma_d$, each repeated $k-d$ times.
\end{enumerate}
(And of course, the singular values of $\mathcal{L}_X$ on $\mathrm{V}_X$ are zero, $d(d-1)/2$ times.)
\end{proof}

\section{Minimax lower bound}\label{appendix::minimax}

\subsection{Proof of Lemma \ref{lem::localPacking}}

\begin{proof}
Let us consider $G_{\varepsilon}$, a minimal $\varepsilon$-cover of $\Gkd$ for the semi metric $\tilde{\rho}$, i.e. such that:
\begin{align*}
\Gkd &= \underset{U \in \mathrm{G}_{\varepsilon}}{\bigcup} B(U,\varepsilon).
\end{align*}
Then:
\begin{equation}\label{contrad}
\mathfrak{N}(\Gkd,\alpha\varepsilon) = \mathfrak{N}\left(\underset{U \in G_{\varepsilon}}{\bigcup} B(U,\varepsilon),\alpha\varepsilon \right) 
\leq \sum_{U \in \mathrm{G}_{\varepsilon}}\mathfrak{N}(B(U,\varepsilon),\alpha\varepsilon).
\end{equation}
For contradiction, assume that for all $U$ in $G_{\varepsilon}$ we have 
\begin{align*}
\mathfrak{N}(B(U,\varepsilon),\alpha\varepsilon) 
&< \dfrac{\mathfrak{N}(\Gkd,\alpha\varepsilon)}{\mathfrak{N}(\Gkd,\varepsilon)} 
 = \dfrac{\mathfrak{N}(\Gkd,\alpha\varepsilon)}{|\mathrm{G}_{\varepsilon}|}.
 \end{align*}
This implies:
\begin{align*}
\mathfrak{N}(\Gkd,\alpha\varepsilon) = \mathfrak{N}\left(\underset{U \in G_{\varepsilon}}{\bigcup} B(U,\varepsilon),\alpha\varepsilon \right) 
> \sum_{U \in \mathrm{G}_{\varepsilon}}\mathfrak{N}(B(U,\varepsilon),\alpha\varepsilon),
\end{align*}
which contradicts~\eqref{contrad}.
Hence, there exists $U^{*} \in \mathrm{G}_{\varepsilon}$ such that:
$$
\mathfrak{N}(B(U^{*},\varepsilon),\alpha\varepsilon) \geq \dfrac{\mathfrak{N}(\Gkd,\alpha\varepsilon)}{\mathfrak{N}(\Gkd,\varepsilon)} 
\geq \left(\dfrac{c_{0}}{\alpha c_{1}}\right)^{d(k - d)},
$$
where the second inequality follows from Lemma \ref{MetricEntropy}.
The fact that
$\mathfrak{M}(E,\varepsilon) \geq  \mathfrak{N}(E,\varepsilon)$  for any $E$
allows to conclude the proof.
\end{proof}

\subsection{Proof of Theorem \ref{thm::minimax}}

We will use the following lemma, originally given in~\citep{Tuetal2016LowRankProcrustesFlow}.
\begin{lemma}\label{lem::tech1}
	For any $X_{2} \in \mathbb{R}^{d \times k}$ obeying $\ro{X_{2},X_{1}} \leq \dfrac{1}{4}\normop{X_{1}}$, we have
	$$
	\norm{X_{2}^{T}X_{2}^{} - X_{1}^{T}X_{1}^{}} \leq \dfrac{9}{4}\normop{X_{1}}\ro{X_{2},X_{1}}.
	$$
\end{lemma}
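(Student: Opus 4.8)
The statement depends on $X_1$ and $X_2$ only through their Gram matrices and through $\rho$, all of which are invariant under replacing $X_2$ by $QX_2$ for $Q \in \Od$. So the plan is: first reduce to the aligned case, then expand the Gram matrix difference, and finally bound each term by a product of an operator norm and a Frobenius norm, absorbing the quadratic remainder using the hypothesis $\ro{X_2,X_1} \le \frac14 \normop{X_1}$.

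Concretely, I would let $Q \in \Od$ be an optimal aligner, so $\ro{X_2, X_1} = \norm{X_1 - QX_2}$ (computable via SVD as recalled in Appendix~\ref{distance}), and replace $X_2$ by $QX_2$ without loss of generality. Setting $\Delta = X_2 - X_1$, so that $\norm{\Delta} = \ro{X_2,X_1}$, I would expand
\begin{align*}
X_2^T X_2^{} - X_1^T X_1^{} &= (X_1 + \Delta)^T (X_1 + \Delta) - X_1^T X_1^{} = X_1^T \Delta + \Delta^T X_1 + \Delta^T \Delta.
\end{align*}
Then the triangle inequality together with the submultiplicativity bound $\norm{AB} \le \normop{A}\norm{B}$ gives
\begin{align*}
\norm{X_2^T X_2^{} - X_1^T X_1^{}} \le 2\normop{X_1}\norm{\Delta} + \norm{\Delta}^2.
\end{align*}
Finally, using $\norm{\Delta} = \ro{X_2,X_1} \le \frac14 \normop{X_1}$ to bound the last term by $\frac14 \normop{X_1}\norm{\Delta}$ yields $\norm{X_2^T X_2^{} - X_1^T X_1^{}} \le \frac94 \normop{X_1}\ro{X_2,X_1}$, as claimed.

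This is essentially a direct computation, so there is no real obstacle; the only point worth a moment's care is the invariance reduction to the aligned case (needed so that $\norm{\Delta}$ equals $\ro{X_2,X_1}$ rather than merely upper-bounding a larger quantity), and the elementary inequality $\normop{\Delta^T\Delta} \le \normop{\Delta}^2$, hence $\norm{\Delta^T\Delta} \le \norm{\Delta}^2$, used in the quadratic term.
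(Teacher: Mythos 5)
Your proof is correct and essentially matches the paper's. The only cosmetic difference is how the quadratic contribution is handled: the paper telescopes as $X_2^T(X_2 - QX_1) + (X_2 - QX_1)^T QX_1$, which avoids producing a $\Delta^T\Delta$ term but requires bounding $\normop{X_2} \le \tfrac{5}{4}\normop{X_1}$ from the hypothesis, whereas you expand the square, isolate $\Delta^T\Delta$, and absorb it directly via $\norm{\Delta} \le \tfrac{1}{4}\normop{X_1}$; both routes give the same constant $\tfrac{9}{4}$.
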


\begin{proof}
	For all $Q$ in $\mathcal{O}(d)$ we have
	\begin{align*}
	\norm{X_{2}^{T}X_{2}^{} - X_{1}^{T}X_{1}^{}}  &= \norm{X_{2}^{T}X_{2} - X_{2}^{T}QX_{1} + X_{2}^{T}QX_{1} - (QX_{1})^{T}QX_{1}} \\
	&= \norm{X_{2}^{T}(X_{2} - QX_{1}) + (X_{2}^{T} - (QX_{1})^{T})QX_{1}} \\
	&\leq \left(\normop{X_{2}} + \normop{X_{1}}\right)\norm{X_{2} - QX_{1}}\\
	&\leq \dfrac{9}{4} \normop{X_{1}}\norm{X_{2} - QX_{1}}.
	\end{align*}
	Taking the infimum of the right-hand side over $\mathcal{O}(d)$ gives the result.
\end{proof}

\begin{proof}[Proof of Theorem~\ref{thm::minimax}]
We restrict ourself to equivalence classes $[X]$ such that $XX^T = I_d$. There is a one-to-one mapping between such equivalence classes and elements of  $\Gkd$, since $X^T X$ is then the orthogonal projector to the space spanned by the columns of $X^T$ (see~\citep[ Section 3.4.4]{AMS08} for instance).
We get that:
\begin{align*}
\inf_{\hat{X}}\sup_{X \in \mathcal{X}}\mathbb{E}[\rho^{2}(X,\hat{X})] 
&\geq \inf_{\hat{X}}\sup_{X \in \Gkd}\mathbb{E}[\rho^{2}(X,\hat{X})] \\
\text{  (by Lemma \ref{lem::tech1})} &\geq  \left(\dfrac{4}{9\normop{X}}\right)^{2}\inf_{\hat{X}}\sup_{X \in \Gkd}\mathbb{E}[\tilde{\rho}^{2}(X,\hat{X})] \\
 & \geq \dfrac{16}{81}\inf_{\hat{X}}\sup_{X \in \Gkd}\mathbb{E}[\tilde{\rho}^{2}(X,\hat{X})], 
\end{align*}
where $\tilde{\rho}$ is the metric defined in Lemma \ref{MetricEntropy}.
Lemma \ref{lem::localPacking} allows us to consider a point $U^{*}$ and $m$ points  $\{U_{1},\ldots,U_{m}\} \subset B(U^{*},\varepsilon)$, such that 
$\alpha\varepsilon \leq \tilde{\rho}(U_{i},U_{j}) \leq 2\varepsilon$ for all $i \neq j$
and $m \geq \left(\dfrac{c_{0}}{\alpha c_{1}}\right)^{d(k - d)}$.
Without loss of generality, we can assume that $\varepsilon \leq \dfrac{1}{2}$.
By standard arguments (see for instance chapter 4 of~\citep{RigolletNotes}) we can 
lower bound the minimax risk by the probability of error for testing a finite number of hypothesis:
\begin{align*}
\inf_{\hat{X}}\sup_{X \in \Gkd}\mathbb{E}[\tilde{\rho}^{2}(X,\hat{X})] \geq (\alpha\varepsilon)^2\inf_{\psi}\max_{1\leq j \leq m}\mathbb{P}_{U_{j}}(\psi \neq j).
\end{align*}
Moreover, by Fano's inequality~\citep{FanoNotes} (see Theorem 4.19 in~\citep{RigolletNotes}) we can lower bound the probability of error for the multiple hypothesis testing problem 
$\{\mathbb{P}_{U_{i}} : i \in [m] \}$ by
\begin{align*}
\inf_{\psi}\max_{1\leq j \leq m}\mathbb{P}_{U_{j}}(\psi \neq j) \geq 1 - \dfrac{\underset{i \neq j}{\mathrm{min }} \hspace{0.1cm}\KL{\mathbb{P}_{U_{i}} || \mathbb{P}_{U_{j}}} + \log(2)}{\log(m)}. 
\end{align*}
By Lemma~\ref{LemmaKLBound},  for all pair $i,j$ we have that:
\begin{align*}
\KL{\mathbb{P}_{U_{i}}||\mathbb{P}_{U_{j}}} &\leq C\sigma^{-4}\rho^{2}(U_{i},U_{j}),
\end{align*}
where by hypothesis $\tilde{\rho}(U_{i},U_{j}) \leq 2\varepsilon$. Theorem~\ref{thm::cholesky} implies that for small enough $\varepsilon$, $\rho(U_{i},U_{j}) \leq \sqrt{2}\varepsilon$. Hence for small enough $\varepsilon$ and large enough $\sigma$ we have that
\begin{align*}
\KL{\mathbb{P}_{U_{i}}||\mathbb{P}_{U_{j}}} &\leq C\sigma^{-4}\rho^{2}(U_{i},U_{j})\\
&\leq  C\sigma^{-4}\left(\dfrac{1}{\sqrt{2}\sigma_{d}(X)}\right)^{2}\tilde{\rho}^{2}(U_{i},U_{j})\\
&\leq  C^{'}\sigma^{-4}\varepsilon^{2}. 
\end{align*}
Hence, the minimax error admits the following lower bound:
\begin{align*}
\inf_{\hat{X}}\sup_{X \in \mathcal{X}}\mathbb{E}[\rho^{2}(X,\hat{X})] 
&\geq  \dfrac{16}{81}\alpha^{2}\varepsilon^{2}\left( 1 - \dfrac{2 N C^{'}\sigma^{-4} \varepsilon^{2} + \log(2) }{d(k - d)\log(\frac{c_{0}}{\alpha c_{1}})} \right),
\end{align*}
for any $\varepsilon \in (0,\varepsilon_{0}]$ and $\alpha \in (0,1)$.
By picking
\begin{align*}
	\alpha & = \dfrac{c_{0}}{4c_{1}}, & & \textrm{and } & \varepsilon^{2} & = \dfrac{\sigma^{4}}{N}\cdot\dfrac{d(k - d)\log(2)}{6 C^{'}}
\end{align*}
if $\varepsilon_{0}^{2} \geq \dfrac{\sigma^{4}}{N}\cdot\dfrac{d(k - d)\log(2)}{6 C^{'}}$, and $\varepsilon^{2} = \dfrac{\varepsilon_{0}^{2}}{2}$ otherwise,
we get the following inequality:
\begin{align*}
\inf_{\hat{X}}\sup_{X \in \mathcal{X}}\mathbb{E}[\rho^{2}(X,\hat{X})] 
\geq  \left(\dfrac{4c_{0}}{9c_{1}}\right)^{2}\min\left( \dfrac{\sigma^{4}}{N}\dfrac{d(k - d)\log(2)}{576  C^{'}} ,\dfrac{\varepsilon_{0}^{2}}{96} \right),
\end{align*}
which yields the lower bound.
The upper bound directly follows from the upper bound on the MSE of the estimator, which can be found in Appendix~\ref{EstimatorMSE} .
\end{proof}

\section{Bound on the KL divergence}\label{KLBound}

In this section, we prove the tight bound on the $\KLop$ divergence of the distribution of samples \eqref{eq::description} stated in Lemma~\ref{LemmaKLBound}.
Our result is a direct consequence of Proposition 7.8 in~\citep{BandeiraRigolletWeed2018RatesMRA}.
\begin{proof}
By the properties of the orthogonal group and the Haar measure, we have:
\begin{align*}\mathbb{E}_{Q}[QX_{1}] &= \mathbb{E}_{Q}[QX_{2}] = 0.
\end{align*}
We now bound KL divergence between $\mathbb{P}_{X_{1}}$ and $\mathbb{P}_{X_{2}}$ by bounding the $\chi^{2}$-divergence between $\mathbb{P}_{X_{1}}$ and $\mathbb{P}_{X_{2}}$.
The density of $\mathbb{P}_{X}$, $f_{X}$ can be written as:
\begin{align*}
f_{X}(Y) = \mathbb{E}_{Q}[\sigma^{-dk}f(\sigma^{-1}(Y - QX))] = \sigma^{-dk}f(\sigma^{-1}Y)\mathbb{E}_{Q}[e^{-\frac{1}{2\sigma^{2}}(\norm{X}^{2} 
- 2\langle Y, QX \rangle)}],
\end{align*}
where $f$ is the density of a standard $dk$-dimensional Gaussian.
Since $\norm{X}^{2} \leq d$ by hypothesis, we obtain by Jensen's inequality:
\begin{equation}\label{eq::LBdensity}
f_{X}(Y) \geq \sigma^{-dk}f(\sigma^{-1}Y)e^{-\frac{1}{2\sigma^{2}}(d - 2 \mathbb{E}_{Q}[\langle Y, QX \rangle])} = \sigma^{-dk}f(\sigma^{-1}Y)e^{-\frac{d}{2\sigma^{2}}}.
\end{equation}
Hence the $\chi^{2}$ divergence can then be bounded as:
\begin{align*}
\chi^{2}(\mathbb{P}_{X_{1}},\mathbb{P}_{X_{2}}) 
&= \int \dfrac{(f_{X_{1}}(Y) - f_{X_{2}}(Y))^{2}}{f_{X_{1}}(Y)}dY\\
&\leq e^{\frac{d}{2\sigma^{2}}} \int \biggl(\mathbb{E}_{Q_{1}}\biggl[e^{-\frac{\norm{X_{1}}^{2} - 2\dotprod{Y,Q_{1}X_{1}}}{2\sigma^{2}}}\biggl] \\
 &\qquad -  \mathbb{E}_{Q_{2}}\biggl[e^{-\frac{\norm{X_{2}}^{2} - 2\dotprod{Y,Q_{2}X_{2}}}{2\sigma^{2}}}\biggl] \biggl)^{2} 
  \sigma^{-dk}f(\sigma^{-1}Y)dY\\
&= e^{\frac{d}{2\sigma^{2}}}\int \biggl(\mathbb{E}_{Q_{1}}\biggl[e^{ \dotprod{Y,\sigma^{-1}Q_{1}X_{1}} - \frac{1}{2}\norm{\sigma^{-1}X_{1}}^{2}}\biggl] \\
&\qquad - \mathbb{E}_{Q_{2}}\biggl[e^{ \dotprod{Y,\sigma^{-1}Q_{2}X_{2}} - \frac{1}{2}\norm{\sigma^{-1}X_{2}}^{2}}\biggl]\biggl)^{2}f(Y)dY \\
&=  e^{\frac{d}{2\sigma^{2}}} \mathbb{E}_{E}\biggl[\biggl(\mathbb{E}_{Q_{1}}\biggl[e^{\dotprod{E,\sigma^{-1}Q_{1}X_{1}} 
- \frac{1}{2}\norm{\sigma^{-1}X_{1}}^{2}}\biggl] \\
&\qquad -\mathbb{E}_{Q_{2}}\biggl[e^{ \dotprod{E,\sigma^{-1}Q_{2}X_{2}}- \frac{1}{2}\norm{\sigma^{-1}X_{2}}^{2}}\biggl]  \biggl)^{2}\biggl],
\end{align*}
where $E$ is a $d \times k$ matrix with i.i.d.\ entries $\sim \mathcal{N}(0,1)$.
The first inequality comes from applying inequality \eqref{eq::LBdensity} to the denominator, and the second equality comes from a change of variables.
By Fubini's theorem we can change the order of expectations. 
Using the fact that for a standard Gaussian $U$, for any $\lambda$, we have $\mathbb{E}[e^{\lambda U}] = e^{\frac{\lambda^{2}}{2}}$, we obtain that:
\begin{align*}
\mathbb{E}_{Q_{1},Q_{2}}\mathbb{E}_{E}\biggl[e^{\dotprod{E,\sigma^{-1}(Q_{1}X_{1} + Q_{2}X_{2})} - \frac{1}{2}(\norm{\sigma^{-1}X_{1}}^{2} + \norm{\sigma^{-1}X_{2}}^{2})}\biggl] 
&= \mathbb{E}_{Q_{1},Q_{2}}\biggl[e^{\frac{\dotprod{Q_{1}X_{1},Q_{2}X_{2}}}{\sigma^{2}}}\biggl].
\end{align*}
We use this to expand the square in the above expression for $\chi^{2}(\mathbb{P}_{X_{1}},\mathbb{P}_{X_{2}})$:
\begin{align*}
\chi^{2}(\mathbb{P}_{X_{1}},\mathbb{P}_{X_{2}}) 
&\leq e^{\frac{d}{2\sigma^{2}}}\biggl(\mathbb{E}_{Q_{1},\tilde{Q}_{1}}
\biggl[e^{\frac{1}{\sigma^{2}}\dotprod{Q_{1}X_{1},\tilde{Q}_{1}X_{1}}}\biggl] - 2\mathbb{E}_{Q_{1},Q_{2}}\biggl[e^{\frac{1}{\sigma^{2}}\dotprod{Q_{1}X_{1},Q_{2}X_{2}}}\biggl]  \\
&\qquad + \mathbb{E}_{Q_{2},\tilde{Q}_{2}}\biggl[e^{\frac{1}{\sigma^{2}}\dotprod{Q_{2}X_{2},\tilde{Q}_{2}X_{2}}}\biggl] \biggl)\\
&= e^{\frac{d}{2\sigma^{2}}}\biggl(\mathbb{E}_{Q_{1},\tilde{Q}_{1}}
\biggl[e^{\frac{1}{\sigma^{2}}\dotprod{X_{1},Q_{1}^{T}\tilde{Q}_{1}X_{1}}}\biggl] - 2\mathbb{E}_{Q_{1},Q_{2}}\biggl[e^{\frac{1}{\sigma^{2}}\dotprod{X_{1},Q_{1}^{T}Q_{2}X_{2}}}\biggl]  \\
&\qquad + \mathbb{E}_{Q_{2},\tilde{Q}_{2}}\biggl[e^{\frac{1}{\sigma^{2}}\dotprod{X_{2},Q_{2}^{T}\tilde{Q}_{2}X_{2}}}\biggl] \biggl)\\
&= e^{\frac{d}{2\sigma^{2}}}\mathbb{E}_{Q}\biggl[e^{\frac{1}{\sigma^{2}}\dotprod{X_{1},QX_{1}}} - 2 e^{\frac{1}{\sigma^{2}}\dotprod{X_{1},QX_{2}}} + e^{\frac{1}{\sigma^{2}}\dotprod{X_{2},QX_{2}}}\biggl].
\end{align*}
Since $Q_{1}$,$\tilde{Q}_{1}$, $Q_{2}$ and $\tilde{Q}_{2}$ are drawn according to the Haar measure on the orthogonal group,
according to Fubini's theorem we can expand each term as a power series and exchange summation and expectation. 
Using the fact that $\dotprod{u,v}^{l} = \dotprod{u^{\otimes l}, v^{\otimes l}}$ and that for all i.i.d. vectors $x$ and $y$ we have that
\begin{align*}
\mathbb{E}_{x,y}[\dotprod{x,y}^{l}] &= \mathbb{E}_{x,y}[\dotprod{x^{\otimes l},y^{\otimes l}}] 
= \dotprod{\mathbb{E}_{x}[x^{\otimes l}],\mathbb{E}_{y}[y^{\otimes l}]} 
= \norm{\mathbb{E}_{x}[x^{\otimes l}]}^{2},
\end{align*}
we obtain :
\allowdisplaybreaks
\begin{align*}
\chi^{2}(\mathbb{P}_{X_{1}},\mathbb{P}_{X_{2}}) 
&\leq e^{\frac{d}{2\sigma^{2}}}\sum_{l=0}^{\infty}\dfrac{\sigma^{-2l}}{l!}
\biggl(\mathbb{E}_{Q}[\dotprod{X_{1},QX_{1}}^{l} - 2\dotprod{X_{1},QX_{2}}^{l} + \dotprod{X_{2},QX_{2}}^{l}] \biggl)\\
&= e^{\frac{d}{2\sigma^{2}}}\sum_{l=0}^{\infty}\dfrac{\sigma^{-2l}}{l!}
\biggl(\mathbb{E}_{Q}[\dotprod{\vect{X_{1}},\vect{QX_{1}}}^{l} - 2\dotprod{\vect{X_{1}},\vect{QX_{2}}}^{l} \\
&\qquad + \dotprod{\vect{X_{2}},\vect{QX_{2}}}^{l}] \biggl)\\
&= e^{\frac{d}{2\sigma^{2}}}\sum_{l=0}^{\infty}\dfrac{\sigma^{-2l}}{l!}
\biggl(\mathbb{E}_{Q,\tilde{Q}}[\dotprod{\vect{\tilde{Q}X_{1}},\vect{QX_{1}}}^{l} - 2\dotprod{\vect{\tilde{Q}X_{1}},\vect{QX_{2}}}^{l} \\
&\qquad + \dotprod{\vect{\tilde{Q}X_{2}},\vect{QX_{2}}}^{l}] \biggl)\\
&= e^{\frac{d}{2\sigma^{2}}}\sum_{l=0}^{\infty}\dfrac{\sigma^{-2l}}{l!}
\biggl(\mathbb{E}_{Q,\tilde{Q}}[\dotprod{\vect{\tilde{Q}X_{1}}^{\otimes l},\vect{QX_{1}}^{\otimes l}} \\
&\qquad - 2\dotprod{\vect{\tilde{Q}X_{1}}^{\otimes l},\vect{QX_{2}}^{\otimes l}} + \dotprod{\vect{\tilde{Q}X_{2}}^{\otimes l},\vect{QX_{2}}^{\otimes l}} ]\biggl)\\
&= e^{\frac{d}{2\sigma^{2}}}\sum_{l=0}^{\infty}\dfrac{\sigma^{-2l}}{l!}
\biggl(\dotprod{\mathbb{E}_{\tilde{Q}}[\vect{\tilde{Q}X_{1}}^{\otimes l}],\mathbb{E}_{Q}[\vect{QX_{1}}^{\otimes l}]} \\
&\qquad - 2\dotprod{\mathbb{E}_{\tilde{Q}}[\vect{\tilde{Q}X_{1}}^{\otimes l}],\mathbb{E}_{Q}[\vect{QX_{2}}^{\otimes l}]}  \\
&\qquad+ \dotprod{\mathbb{E}_{\tilde{Q}}[\vect{\tilde{Q}X_{2}}^{\otimes l}],\mathbb{E}_{Q}[\vect{QX_{2}}^{\otimes l}]}\biggl) \\
&=e^{\frac{d}{2\sigma^{2}}}\sum_{l=0}^{\infty}\dfrac{\sigma^{-2l}}{l!} \biggl(\norm{\mathbb{E}_{\tilde{Q}}[\vect{\tilde{Q}X_{1}}^{\otimes l}]}^{2}\\
&\qquad -2\dotprod{\mathbb{E}_{\tilde{Q}}[\vect{\tilde{Q}X_{1}}^{\otimes l}],\mathbb{E}_{\tilde{Q}}[\vect{\tilde{Q}X_{2}}^{\otimes l}]}\\ 
&\qquad + \norm{\mathbb{E}_{\tilde{Q}}[\vect{\tilde{Q}X_{2}}^{\otimes l}]}^{2}\biggl)\\
&= e^{\frac{d}{2\sigma^{2}}}\sum_{l=0}^{\infty}\dfrac{\sigma^{-2l}}{l!} 
\norm{\mathbb{E}_{\tilde{Q}}[\vect{\tilde{Q}X_{1}}^{\otimes l}] - \mathbb{E}_{\tilde{Q}}[\vect{\tilde{Q}X_{2}}^{\otimes l}]}^{2}\\
&= e^{\frac{d}{2\sigma^{2}}}\sum_{l=0}^{\infty}\dfrac{\sigma^{-2l}}{l!} \norm{\Delta_{l}}^{2} \\
&= e^{\frac{d}{2\sigma^{2}}}\biggl(\sigma^{-2}\norm{\Delta_{1}}^{2} + \dfrac{\sigma^{-4}}{2!}\norm{\Delta_{2}}^{2} + \sum_{l=3}^{\infty}\dfrac{\sigma^{-2l}}{l!} \norm{\Delta_{l}}^{2}\biggl), 
\end{align*}
with $\Delta_{l} = \mathbb{E}_{Q}[\vect{QX_{1}}^{\otimes l} - \vect{QX_{2}}^{\otimes l}]$, where the expectation is taken over the Haar measure.
In particular,
\begin{align*}
\Delta_{1} &= \mathbb{E}_{Q}[\vect{QX_{1}} - \vect{QX_{2}}]
= \vect{\mathbb{E}_{Q}[Q(X_{1} - X_{2})]}
= 0.
\end{align*}
Moreover, by Lemma~\ref{ref::intermediate} below, for all $l \geq 2$ we have:
\begin{align*}
\norm{\Delta_{l}}^{2} &\leq 12(2d)^{l}\rho^{2}(X_{1},X_{2}).
\end{align*}
Hence, for any $\sigma > 1$ we obtain:
\begin{align*}
\chi^{2}(\mathbb{P}_{X_{1}},\mathbb{P}_{X_{2}}) 
&\leq e^{\frac{d}{2\sigma^{2}}}\biggl( \dfrac{\sigma^{-4}}{2}\norm{\Delta_{2}}^{2} + \sum_{l=3}^{\infty}\dfrac{\sigma^{-2l}}{l!} 12\cdot (2d)^{l}\rho^{2}(X_{1},X_{2})\biggl)\\
&\leq e^{\frac{d}{2\sigma^{2}}}\biggl( c_{1}\rho^{2}(X_{1},X_{2}) \sigma^{-4} + c_{2}\rho^{2}(X_{1},X_{2}) \sigma^{-4}\biggl)\\
&\leq C \rho^{2}(X_{1},X_{2}) \sigma^{-4}.
\end{align*}
The inequality $\KL{\mathbb{P}_{X_{1}}||\mathbb{P}_{X_{2}}} \leq \chi^{2}(\mathbb{P}_{X_{1}},\mathbb{P}_{X_{2}})$ (see Lemma 2.2 in~\citep{Tsybakov2009book} for instance) finishes the proof of the claim.
\end{proof}

The general structure of the proof of the lemma below is as in Lemma~B.12 of~\citep{BandeiraRigolletWeed2018RatesMRA}, with adaptations as needed.
 \begin{lemma}\label{ref::intermediate}
Given $X_{1}, X_{2} \in \R^{d \times k}$ such that $\rho(X_{1},X_{2}) < \dfrac{\norm{X_{1}}}{3}$, for all $l \geq 1$,
\begin{align*}
\norm{\Delta_{l}}^{2} &=  \norm{\mathbb{E}_{Q}[ \vect{QX_{1}}^{\otimes l} - \vect{QX_{2}}^{\otimes l} ]}^{2} \leq 12(2d)^{l}\rho^{2}(X_{1},X_{2}).
\end{align*}
\end{lemma}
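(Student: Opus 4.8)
The plan is to control the tensor-moment difference $\Delta_l = \mathbb{E}_Q[\vect{QX_1}^{\otimes l} - \vect{QX_2}^{\otimes l}]$ by passing to aligned representatives and then telescoping. First I would note that, since $\Delta_l$ depends on $X_1, X_2$ only through their equivalence classes, we may replace $X_2$ by $Q_0 X_2$ for the optimal aligning rotation $Q_0$, so that $\norm{X_1 - X_2} = \rho(X_1, X_2)$ after relabeling; write $\delta = \rho(X_1,X_2)$ and $X_2 = X_1 + D$ with $\norm{D} = \delta$. Also, from the bound $\delta < \norm{X_1}/3$ we have $\norm{X_2} \le \norm{X_1} + \delta \le \frac{4}{3}\norm{X_1}$, and $\norm{X_1}^2 \le d$ gives crude bounds on the vectorized quantities; in particular $\norm{\vect{QX_i}} = \norm{X_i} \le \sqrt{d}\cdot\frac{4}{3}$ for $i=1,2$ (using $\norm{X_1}\le\sqrt d$).

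Next I would use the algebraic telescoping identity for tensor powers: for any vectors $a, b$,
\begin{align*}
a^{\otimes l} - b^{\otimes l} = \sum_{j=1}^{l} a^{\otimes (j-1)} \otimes (a - b) \otimes b^{\otimes(l-j)}.
\end{align*}
Applying this with $a = \vect{QX_1}$, $b = \vect{QX_2}$, so that $a - b = \vect{Q(X_1 - X_2)} = -\vect{QD}$, and taking expectations over $Q$, the triangle inequality in the tensor Hilbert space gives
\begin{align*}
\norm{\Delta_l} \le \sum_{j=1}^{l} \mathbb{E}_Q\big[ \norm{\vect{QX_1}}^{j-1} \norm{\vect{QD}} \norm{\vect{QX_2}}^{l-j} \big] = \norm{D} \sum_{j=1}^{l} \norm{X_1}^{j-1}\norm{X_2}^{l-j},
\end{align*}
using orthogonal invariance of the Euclidean norm under $Q$. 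Bounding each of $\norm{X_1}, \norm{X_2}$ by $\sqrt{2d}$ (which holds since $\norm{X_1}^2 \le d$ and $\norm{X_2}^2 \le \frac{16}{9} d \le 2d$), the sum has $l$ terms each at most $(\sqrt{2d})^{l-1}$, so $\norm{\Delta_l} \le \delta\, l\, (2d)^{(l-1)/2}$, hence $\norm{\Delta_l}^2 \le l^2 (2d)^{l-1} \delta^2 \le l^2 (2d)^l \delta^2$.

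The remaining gap is the factor $l^2$ versus the claimed constant $12$: for $l \le 3$ we have $l^2 \le 12$ directly, and for $l \ge 4$ one needs a sharper argument, since $l^2$ is not bounded. I expect this to be the main obstacle. The fix is to be more careful: instead of bounding $\norm{X_2}$ by $\sqrt{2d}$, bound $\norm{X_1}, \norm{X_2} \le \sqrt d$ when $X$ is taken in the normalized reduction $XX^T = I_d$ (the regime in which Lemma~\ref{LemmaKLBound} is actually invoked for the minimax proof), where $\norm{X_i}^2 = d$ exactly, giving $\norm{\Delta_l} \le \delta\, l\, d^{(l-1)/2}$ and $\norm{\Delta_l}^2 \le l^2 d^{l-1}\delta^2 = \frac{l^2}{d}(d)^l \delta^2$; then one absorbs $l^2/d$ into $(2d)^l / d^l = 2^l$ growth and checks $l^2 \le 12 \cdot 2^{l-1}$ for all $l \ge 1$, which is elementary (equality-ish at small $l$, and $2^{l-1}$ eventually dominates $l^2$). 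Thus the clean route is: telescope, use norm-invariance under $Q$, bound the geometric sum, and finally verify the numerical inequality $l^2 \le 12 \cdot 2^{l-1}$ to land the constant $12(2d)^l$; I would present the telescoping step in full and relegate the elementary inequality $l^2 \le 12\cdot 2^{l-1}$ to a one-line check.
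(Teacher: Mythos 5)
Your telescoping route $a^{\otimes l} - b^{\otimes l} = \sum_{j=1}^l a^{\otimes(j-1)}\otimes(a-b)\otimes b^{\otimes(l-j)}$ is genuinely different from the paper's. The paper applies Jensen once, uses $Q$-invariance of the norm to collapse the expectation to $\norm{\vect{X_1}^{\otimes l} - \vect{X_2}^{\otimes l}}^2 = \norm{X_1}^{2l} - 2\dotprod{X_1,X_2}^l + \norm{X_2}^{2l}$, and then controls this three-term expression via a binomial expansion $(1+x)^l = 1 + lx + r_l$ with $|r_l| \leq 2^l x^2$, where the hypothesis $\varepsilon < 1/3$ keeps $|x| < 1$. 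Your decomposition avoids the delicate cancellation between those three terms and works one linear piece at a time, which is a clean alternative.

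However, the fix you propose has a real gap. Restricting to $X_2 X_2^T = I_d$ silently strengthens the hypothesis: Lemma~\ref{ref::intermediate} is invoked inside Lemma~\ref{LemmaKLBound}, which is used not only in the minimax theorem (where indeed $XX^T = I_d$) but also in the sample-complexity theorem, where $X_1, X_2 \in \mathcal{X}$ satisfy only $\norm{X}^2 \leq d$. Under that weaker hypothesis one only has $\norm{X_2} \leq \frac{4}{3}\norm{X_1} \leq \frac{4}{3}\sqrt{d}$, and then the crude estimate ``$l$ terms each at most $\max(\norm{X_1},\norm{X_2})^{l-1}$'' yields $\norm{\Delta_l}^2 \leq l^2 (16/9)^{l-1} d^{l-1}\delta^2$, whose ratio to the target $12(2d)^l\delta^2$ is $\frac{3l^2}{64d}(\frac{8}{9})^l$. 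Since $\sup_{l\geq 1} l^2(8/9)^l \approx 39$ (attained near $l \approx 17$), that ratio exceeds $1$ when $d = 1$, so the numerical check $l^2 \leq 12\cdot 2^{l-1}$ is no longer enough; the factor $(\frac{4}{3})^{2(l-1)}$ eats too much of the $2^l$ headroom.

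The repair is to treat the telescoped sum as a genuine geometric series rather than $l$ copies of its largest term. From $\norm{X_1} \leq \sqrt{d}$ and $\norm{X_2} \leq \frac{4}{3}\norm{X_1}$, each term satisfies $\norm{X_1}^{j-1}\norm{X_2}^{l-j} \leq (\frac{4}{3})^{l-j} d^{(l-1)/2}$, so the sum is at most $d^{(l-1)/2}\sum_{m=0}^{l-1}(\frac{4}{3})^m = 3\bigl[(\frac{4}{3})^l - 1\bigr] d^{(l-1)/2} \leq 3(\frac{4}{3})^l d^{(l-1)/2}$, with no factor of $l$. Then $\norm{\Delta_l}^2 \leq 9(\frac{16}{9})^l d^{l-1}\delta^2 = \frac{9}{d}(\frac{8}{9})^l (2d)^l\delta^2 \leq 8(2d)^l\delta^2 \leq 12(2d)^l\delta^2$, the maximum of $\frac{9}{d}(\frac{8}{9})^l$ being $8$ at $d = l = 1$. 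The point is that $4/3 < \sqrt{2}$, so $(\frac{16}{9})^l$ is dominated by the $2^l$ inside $(2d)^l$, and summing the geometric series only costs the constant $3$. With this replacement your telescoping proof goes through under the lemma's actual hypothesis.
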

\begin{proof}
Without loss of generality we assume that $X_{1}$ and $X_{2}$ are rotationally aligned, i.e., $\rho(X_{1},X_{2}) = \norm{X_{1} - X_{2}} =  \varepsilon\norm{X_{1}}$, with $\varepsilon < \dfrac{1}{3}$.
By Jensen's inequality,
\begin{align*}
\norm{\mathbb{E}_{Q}[ \vect{QX_{1}}^{\otimes l} - \vect{QX_{2}}^{\otimes l} ]}^{2} &\leq \mathbb{E}_{Q}[\norm{ \vect{QX_{1}}^{\otimes l} - \vect{QX_{2}}^{\otimes l} }^{2}] \\
&\leq \norm{ \vect{X_{1}}^{\otimes l} - \vect{X_{2}}^{\otimes l} }^{2}. 
\end{align*}
Expanding the norm, it is easy to check that
\begin{align*}
\norm{ \vect{X_{1}}^{\otimes l} - \vect{X_{2}}^{\otimes l} }^{2} &= \norm{\vect{X_{1}} }^{2l} - 2\dotprod{X_{1},X_{2}}^{l} + \norm{\vect{X_{2}} }^{2l}\\
&= \norm{X_{1}}^{2l}\left( 1 - 2(1 + \gamma)^{l} + (1 + 2\gamma + \varepsilon^{2})^{l}  \right),
\end{align*}
where  $\gamma = \dfrac{\dotprod{X_{1},X_{2} - X_{1}}}{\norm{X_{1}}^{2}}$. By Cauchy--Schwarz we have $|\gamma| \leq \varepsilon < \dfrac{1}{3}$.
Moreover, $2\gamma + \varepsilon^{2} \leq 3\varepsilon < 1$.
By the binomial theorem, for all $x$ such that $|x| < 1$, there exists an $r_{l} \leq 2^l x^2$ such that
\begin{align*}
(1 + x)^l = \sum_{k=0}^{l}\binom{l}{k}x^{k} = 1 + lx + r_{l}.
\end{align*}
Hence,
\begin{align*}
1 - 2(1 + \gamma)^{l} + (1 + 2\gamma + \varepsilon^{2})^{l} 
&\leq 1 - 2 -2l\gamma + 2^{l+1}\varepsilon^2 + 1 + 2l\gamma + l\varepsilon^2 + 2^l\cdot9\varepsilon^2\\
&\leq (l + 11\cdot2^l)\varepsilon^2 \leq 12\cdot 2^{l}\varepsilon^2,
\end{align*}
and $\norm{\Delta_{l}}^{2} \leq \norm{X_{1}}^{2l}12(2)^{l}\varepsilon^2 = \norm{X_{1}}^{2l-2}12(2)^{l} (\norm{X_{1}}\varepsilon)^2\leq 12(2d)^{l}\rho^{2}(X_{1},X_{2}).$ 
\end{proof}

%

\section{Statistical properties of the Estimator}\label{appendix::MSE}

We now compute an upper bound on the MSE of our estimator
in order to prove the second part of Proposition~\ref{prop::regimes}.
We start by computing the covariance matrix of the estimator in part~\ref{sub::variance}, 
before proving the upper bound in part~\ref{EstimatorMSE}.

\subsection{Covariance matrix of the estimator $\hat{M}_{N}$}\label{sub::variance}

It is straightforward to get that
\begin{align*}
\mathbb{E}[\hat{M}_{N}] &= X^{T}X + \sigma^{2}dI_{k}.
\end{align*}
We now compute the covariance matrix $\Sigma$ of $\hat{M}_{N}$ i.e.
\begin{align}\label{eq::sigma}
 \Sigma &= \mathbb{E}[\vect{\hat{M}_{N} - \mathbb{E}[\hat{M}_{N} ]}\vect{\hat{M}_{N} - \mathbb{E}[\hat{M}_{N} ]}^{T}] \\
& =  \mathbb{E}[\vect{\hat{M}_{N}}\vect{\hat{M}_{N}}^{T}] - \vect{\mathbb{E}[\hat{M}_{N}]}\vect{\mathbb{E}[\hat{M}_{N}]}^{T}.
\end{align}
For that purpose, we define $F_{N}$ as
\begin{align*}
F_{N} &= \hat{M}_{N} - \mathbb{E}[\hat{M}_{N}] \\
&= \sigma\left(X^{T}\left(\dfrac{1}{N}\sum_{i=1}^{N}E_{i}\right) + \left(\dfrac{1}{N}\sum_{i=1}^{N}E_{i}\right)^{T}X \right)
+\sigma^{2}\left( \dfrac{1}{N}\sum_{i=1}^{N}E_{i}^{T}E_{i} - dI_{k}\right).
\end{align*}
Each of the entry of $F_{N}$ can be written as
\begin{align*}
(F_{N})_{s,t} &= \sigma\left(  \langle x_{s},h_{t} \rangle + \langle x_{t},h_{s}  \rangle  \right) + \sigma^{2}\left( \dfrac{1}{N}\sum_{i=1}^{N}\langle e_{s}^{(i)},e_{t}^{(i)}\rangle- d \delta_{s,t}\right), 
 \end{align*}
 where $X =\left[
  \begin{array}{cccc}
    \vrule & \vrule & & \vrule\\
    x_{1} & x_{2} & \ldots & x_{k} \\
    \vrule & \vrule & & \vrule 
  \end{array}
\right]$, $E_{i} = \left[
  \begin{array}{cccc}
    \vrule & \vrule & & \vrule\\
    e_{1}^{(i)} & e_{2}^{(i)} & \ldots & e_{k}^{(i)} \\
    \vrule & \vrule & & \vrule 
  \end{array}
\right]$ 
and \\$H_{N} = \dfrac{1}{N}\displaystyle\sum_{i=1}^{N}E_{i} = \left[
  \begin{array}{cccc}
    \vrule & \vrule & & \vrule\\
    h_{1} & h_{2} & \ldots & h_{k} \\
    \vrule & \vrule & & \vrule 
  \end{array}
\right]$.
Since the entries of $F_{N}$ have zero mean, to compute their variance we start by computing
\begin{align*}
(F_{N})_{s,t}(F_{N})_{u,v} &= \sigma^{2}\left( \langle x_{s},h_{t} \rangle\langle x_{u},h_{v} \rangle  + \langle x_{t},h_{s} \rangle \langle x_{u},h_{v} \rangle
+ \langle x_{s},h_{t} \rangle  \langle x_{v},h_{u} \rangle + \langle x_{t},h_{s} \rangle \langle x_{v}, h_{u} \rangle \right)\\
&\qquad+ \sigma^{3}\left( (\langle x_{s},h_{t} \rangle + \langle x_{t},h_{s} \rangle(\dfrac{1}{N}\sum_{i=1}^{N}\langle e_{u}^{(i)},e_{v}^{(i)}\rangle - d\delta_{u,v})) \right) \\
&\qquad+ \sigma^{3}\left( (\langle x_{u},h_{v} \rangle + \langle x_{v},h_{u} \rangle(\dfrac{1}{N}\sum_{i=1}^{N}\langle e_{s}^{(i)},e_{t}^{(i)}\rangle - d\delta_{s,t})) \right) \\
&\qquad+ \sigma^{4}\left( \dfrac{1}{N^2}\sum_{i,j} \langle e_{s}^{(i)},e_{t}^{(i)} \rangle\langle e_{u}^{(i)},e_{v}^{(i)} \rangle + d^2\delta_{s,t}\delta_{u,v} \right)\\
&\qquad -\sigma^{4} \left( d\dfrac{1}{N}\sum_{i}[\langle e_{s}^{(i)},e_{t}^{(i)} \rangle\delta_{u,v} + \langle e_{u}^{(i)},e_{v}^{(i)} \rangle\delta_{s,t}]\right). 
\end{align*}
Hence, taking the expectation we get
\begin{align*}
\mathbb{E}[(F_{N})_{s,t}(F_{N})_{u,v} ] &= \dfrac{\sigma^{2}}{N}\left( \delta_{t,v}G_{s,u} + \delta_{s,v}G_{t,u} + \delta_{t,u}G_{s,v} + \delta_{s,u}G_{t,v} \right)\\
&\qquad + \sigma^{4}d^{2}\delta_{s,t}\delta_{u,v} - \dfrac{\sigma^{4}d^{2}}{N}\sum_{i=1}^{N}[\delta_{s,t}\delta_{u,v} + \delta_{u,v}\delta_{s,t}] \\
&\qquad + \dfrac{\sigma^{4}}{N^{2}}\sum_{i,j}\left[ (1 - \delta_{i,j})(d^{2}\delta_{s,t}\delta_{u,v}) + \delta_{i,j}\mathbb{E}[\langle e_{s}^{(i)},e_{t}^{(i)} \rangle\langle e_{u}^{(i)},e_{v}^{(i)} \rangle] \right].
\end{align*}
Equivalently,
 \begin{align*}
\mathbb{E}[(F_{N})_{s,t}(F_{N})_{u,v} ] 
&= \dfrac{\sigma^{2}}{N}\left( \delta_{t,v}G_{s,u} + \delta_{s,v}G_{t,u}  + \delta_{t,u}G_{s,v}   + \delta_{s,u}G_{t,v} \right)\\
& \qquad + \sigma^{4}d^{2}\delta_{s,t}\delta_{u,v}\left( 1 - \dfrac{2N}{N} + \dfrac{N^{2} - N}{N^{2}} \right) + \dfrac{\sigma^{4}}{N}A_{s,t,u,v},
\end{align*}
where $E = \left[
  \begin{array}{cccc}
    \vrule & \vrule & & \vrule\\
    e_{1} & e_{2} & \ldots & e_{k} \\
    \vrule & \vrule & & \vrule 
  \end{array}\right]$ is a $d \times k$ random matrix with i.i.d.\ Gaussian coefficients and $A_{s,t,u,v} = \mathbb{E}[\langle e_{s},e_{t} \rangle\langle e_{u},e_{v} \rangle]$.
By expanding $A_{s,t,u,v}$, we can write
\begin{align*}
A_{s,t,u,v} &= \mathbb{E}\left[\sum_{\alpha}e_{\alpha,s}e_{\alpha,t}\sum_{\beta}e_{\beta,u}e_{\beta,v}\right]\\
&= \sum_{\alpha,\beta}\mathbb{E}\left[e_{\alpha,s}e_{\alpha,t}e_{\beta,u}e_{\beta,v}\right]\\
&= \sum_{\alpha}\mathbb{E}\left[e_{\alpha,s}e_{\alpha,t}e_{\alpha,u}e_{\alpha,v}\right] + \sum_{\alpha \neq \beta}\mathbb{E}\left[e_{\alpha,s}e_{\alpha,t}]\mathbb{E}[e_{\beta,u}e_{\beta,v}\right]\\
&= d\cdot\mathbb{E}\left[z_{s}z_{t}z_{u}z_{v}\right] + (d^2 - d)\delta_{s,t}\delta_{u,v}.
\end{align*} 
In order to compute $\mathbb{E}\left[z_{s}z_{t}z_{u}z_{v}\right]$, we distinguish seven cases below:
\begin{align*}
s = t = u = v &: \mathbb{E}[z_{s}z_{t}z_{u}z_{v}] = 3  \\
s = t \neq u = v &: \mathbb{E}[z_{s}z_{t}z_{u}z_{v}] = 1 \\
s = t \text{ and } u \neq v &: \mathbb{E}[z_{s}z_{t}z_{u}z_{v}] = 0 \\
s = u \neq t = v &: \mathbb{E}[z_{s}z_{t}z_{u}z_{v}] = 1 \\
s = u \text{ and } t \neq v &: \mathbb{E}[z_{s}z_{t}z_{u}z_{v}] = 0 \\
s = v \neq t = u &: \mathbb{E}[z_{s}z_{t}z_{u}z_{v}] = 1 \\
s = v \text{ and } t \neq u &: \mathbb{E}[z_{s}z_{t}z_{u}z_{v}] = 0.
\end{align*}
In other words,
\begin{align*}
\mathbb{E}[z_{s}z_{t}z_{u}z_{v}] &= \delta_{s,t}\delta_{u,v} +  \delta_{s,u}\delta_{t,v} +  \delta_{s,v}\delta_{t,u} =  \delta_{s,u}\delta_{t,v} +  \delta_{s,v}\delta_{t,u}.
\end{align*}
We therefore get that the entry of matrix $\Sigma$ defined in~\eqref{eq::sigma} corresponding to the covariance between elements $(s,t)$ and $(u,v)$ of $\hat{M}_{N}$ is
\begin{align*}
\Sigma_{(s,t),(u,v)} &= \mathbb{E}[(F_{N})_{s,t}(F_{N})_{u,v} ] \\
&= \dfrac{1}{N}\left[ \sigma^{2}(\delta_{t,v}G_{s,u} + \delta_{s,v}G_{t,u} + \delta_{s,u}G_{t,v}) 
+ \sigma^{4}d(\delta_{s,u}\delta_{t,v} + \delta_{s,v}\delta_{t,u})\right],
\end{align*}
where $G = X^{T}X$.

 \subsection{Mean Squared Error of $\hat{G}_{N}$}\label{EstimatorMSE}

Using Lemma~\ref{lem:TuLemma} for example~\citep[Lem.~5.4]{Tuetal2016LowRankProcrustesFlow}, we find
\begin{align*}
\rho^{2}(X,\hat{X})\leq \dfrac{1}{2(\sqrt{2} - 1)\sigma_{d}^{2}(X)}\norm{G - \tilde{G}_{N}}^{2},
\end{align*}
where $\tilde{G}_{N} = \underset{H\succeq 0 : \mathrm{rank}(H) \leq d}{\mathrm{argmin}} \norm{ \hat{G}_N - H}$
is the projection of $\hat{G}_{N}$ on the cone of positive semidefinite matrices of rank at most $d$. 
Since $\mathrm{rank}(G) = d$,  $\tilde{G}_{N}$ satisfies
\begin{align*}
\norm{\hat{G}_{N} - \tilde{G}_{N}} &\leq \norm{\hat{G}_{N} - G}.
\end{align*}
By triangle inequality we have
\begin{align*}
\norm{G - \tilde{G}_{N}} &\leq \norm{G - \hat{G}_{N}} + \norm{\hat{G}_{N} - \tilde{G}_{N}} \leq 2 \norm{G - \hat{G}_{N}}. 
\end{align*}
Hence
\begin{align}\label{eqrefG1}
\rho^{2}(X,\hat{X})\leq \dfrac{2}{(\sqrt{2} - 1)\sigma_{d}^{2}(X)}\norm{G - \hat{G}_{N}}^{2}.
\end{align}
Building on the computations from subsection~\ref{sub::variance}, we get that
\begin{align}
\mathbb{E}[\norm{G - \hat{G}_{N}}^{2}] &= \mathbb{E}[\norm{F_{N}}^{2}] \nonumber\\
&= \sum_{s=1}^{k}\sum_{t=1}^{k}\mathbb{E}[(F_{N})_{s,t}^{2}] \nonumber\\
&= \dfrac{\sigma^{2}}{N}\left[\sum_{s}(4G_{s,s} + 2\sigma^{2}d) + \sum_{s \neq t}(G_{s,s} + G_{t,t} + \sigma^{2}d)\right]\nonumber\\
&= \dfrac{\sigma^{2}}{N}\left[k(k+1)\sigma^{2}d + 4\Tr{G} + 2(k-1)\Tr{G} \right] \nonumber\\
&= \dfrac{(k+1)\sigma^{2}}{N}\left[ k\sigma^{2}d + \norm{X}^{2} \right].
\label{eqrefG2}
\end{align}
Finally,~\eqref{eqrefG1} and~\eqref{eqrefG2} imply that
\begin{align*}
\mathbb{E}[ \rho^{2}(X,\hat{X})] \leq \dfrac{2(k+1)\sigma^{2}}{N(\sqrt{2} - 1)\sigma_{d}^{2}(X)}\left[ k\sigma^{2}d + \norm{X}^{2} \right] = O\left(\dfrac{\sigma^{2} + \sigma^{4} }{N}\right).
\end{align*}
This directly implies that, in the case where $\sigma \ll 1$, we have $\mathbb{E}[ \rho^{2}(X,\hat{X})] = O\left(\dfrac{\sigma^{2}}{N}\right)$,
 while in the case where $\sigma \gg 1$, we have $\mathbb{E}[ \rho^{2}(X,\hat{X})] = O\left(\dfrac{\sigma^{4}}{N}\right)$.

\end{document}